\documentclass{amsart}
\usepackage[all]{xy}
\newtheorem{theorem}{Theorem}[section]
\newtheorem{lemma}[theorem]{Lemma}
\newtheorem{corollary}[theorem]{Corollary}
\newtheorem{proposition}[theorem]{Proposition}
\newtheorem{conjecture}[theorem]{Conjecture}

\theoremstyle{definition}
\newtheorem{definition}[theorem]{Definition}

\theoremstyle{remark}
\newtheorem{remark}[theorem]{\bf Remark}

%\numberwithin{equation}{section}

%    Absolute value notation

\newcommand{\Q}{\mathbb Q}

\newcommand{\Z}{\mathbb Z}

\newcommand{\N}{\mathbb N}
\newcommand{\F}{\mathbb F}

\begin{document}

\title[Stickelberger splitting in the $K$--theory of number fields]{{On the Stickelberger splitting map in the $K$--theory of number fields}}
%\pagestyle{headings}

%    Information for first author
\author[G. Banaszak]{Grzegorz Banaszak*}
\address{Department of Mathematics and Computer Science, Adam Mickiewicz University,
Pozna\'{n} 61614, Poland}
\email{banaszak@amu.edu.pl}

%    Information for second author
\author[C. Popescu]{Cristian Popescu**}
\address{Department of Mathematics, University of California, San Diego, La Jolla, CA 92093, USA}
\email{cpopescu@math.ucsd.edu}

%    General info
\subjclass[2000]{19D10, 11G30}
\date{}
\keywords{$K$-theory of number fields; Special Values of $L$-functions.}

\thanks{*The first author was partially supported by
a research grant of the Polish Ministry of Science
and Education. **The second author was partially supported by NSF grants
DMS-901447 and DMS-0600905}
\begin{abstract}
{The Stickelberger splitting map in the case of abelian extensions
$F / \Q$ was defined in [Ba1, Chap. IV]. The construction used
Stickelebrger's theorem. For abelian extensions $F / K$ with an
arbitrary totally real base field $K$ the construction of \cite{Ba1}
cannot be generalized since Brumer's conjecture (the analogue of
Stickelberger's theorem) is not proved yet at that level of generality.
In this paper, we construct a general Stickelberger splitting map
under the assumption that the first Stickelberger elements
annihilate the Quillen $K$--groups groups $K_2 ({\mathcal
O}_{F_{l^k}})$ for the Iwasawa tower $F_{l^k} := F(\mu_{l^k})$,
for $k \geq 1.$ The results of [Po] give examples of CM abelian
extensions $F/K$ of general totally real base-fields $K$ for which
the first Stickelberger elements annihilate $K_2 ({\mathcal
O}_{F_{l^k}})_l$ for all $k \geq 1$, while this is proved in full generality in
[GP], under the assumption that the Iwasawa $\mu$--invariant $\mu_{F,l}$ vanishes.
As a consequence, our Stickelberger splitting map
leads to annihilation results as predicted by the original
Coates-Sinnott conjecture for the subgroups $div(K_{2n}(F)_l)$ of $K_{2n}(O_F)_l$ consisting
of all the $l$--divisible elements
in the even Quillen $K$-groups of
$F$, for all odd primes $l$ and all $n$. } In \S6, we construct
a Stickelberger splitting map for \'etale $K$--theory. Finally, we construct both the Quillen
and \'etale Stickelberger splitting maps under the more general assumption
that for some arbitrary but fixed natural number $m>0$, the corresponding $m$--th Stickelberger elements
annihilate $K_{2m} ({\mathcal
O}_{F_k})_l$ (respectively $K^{et}_{2m} ({\mathcal
O}_{F_k})_l$), for all $k$.
\end{abstract}

\maketitle

\section{Introduction}

\noindent Let $F / K$ be an abelian CM extension of a totally real
number field $K.$ Let ${\bf f}$ be the conductor of $F / K$ and
let $K_{{\bf f}} / K$ be the ray--class field extension with
conductor ${\bf f}.$ Let $G_{\bf f} := G(K_{{\bf f}} / K).$ For all $n\in\Z_{\geq 0}$, Coates
\cite{C} defined higher Stickelberger elements $\Theta_{n} ({\bf
b}, {\bf f}) \in \Q [G(F / K)]$, for integral ideals $\bf b$ of
$K$ coprime to $\bf f$. Deligne and Ribet proved that $\Theta_{n}
({\bf b}, {\bf f}) \in \Z[G(F / K)]$. (See section 2 below for the
detailed discussion of the Stickelberger elements and their basic
properties.) In 1974, Coates and Sinnott \cite{CS} formulated the
following conjecture.
\begin{conjecture}\label{Conj. Coates-Sinnott} \, (Coates-Sinnott)
$\Theta_{n} ({\bf b}, {\bf f})$
annihilates $K_{2n} ({\mathcal O}_F)$ for each $n \geq 1.$
\end{conjecture}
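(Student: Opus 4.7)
The plan is to prove Conjecture \ref{Conj. Coates-Sinnott} one prime at a time, exploiting the deep link between Quillen $K$-theory and \'etale cohomology together with the Iwasawa-theoretic Main Conjectures. Fix a prime $l$. The Quillen--Lichtenbaum conjecture, now a theorem following from the Voevodsky--Rost proof of the Bloch--Kato norm residue isomorphism, yields a canonical $G(F/K)$-equivariant isomorphism
\[
K_{2n}(\mathcal{O}_F) \otimes \Z_l \;\cong\; H^2_{\text{\'et}}(\mathcal{O}_F[1/l], \Z_l(n+1)).
\]
It therefore suffices to prove that $\Theta_n({\bf b}, {\bf f})$ annihilates the right-hand side for every $l$. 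For odd $l$ one proceeds as below; for $l = 2$ one must replace $\Theta_n$ by a signed variant and handle separately the extra contribution from the real places of $K$.

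Next, I would introduce the cyclotomic tower $F_\infty := F(\mu_{l^\infty})$ with $\Gamma := G(F_\infty/F)$ and Iwasawa algebra $\Lambda := \Z_l[[G(F_\infty/K)]]$, and set
\[
\mathcal{X}(n+1) := \varprojlim_k H^2_{\text{\'et}}(\mathcal{O}_{F_{l^k}}[1/l], \Z_l(n+1)).
\]
A Tate twist identifies $\mathcal{X}(n+1)$ with $\mathcal{X}(1)(n)$, and $\mathcal{X}(1)$ is, up to controlled finite error, the classical unramified Iwasawa module over $F_\infty$. The Main Conjecture of Iwasawa theory for totally real fields (Wiles) identifies the characteristic ideal of the relevant eigenspace of $\mathcal{X}(1)$ with a Deligne--Ribet $l$-adic $L$-function, and the interpolation property specializes this $L$-function at $s = -n$ to the image of $\Theta_n({\bf b}, {\bf f})$ in $\Lambda$, up to an explicit Euler factor at $l$. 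Hence $\Theta_n({\bf b}, {\bf f})$ lies in the characteristic ideal of $\mathcal{X}(n+1)$ and, modulo the Iwasawa $\mu$-part, annihilates this module.

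To descend to finite level, I would invoke the Hochschild--Serre spectral sequence for $F_\infty/F$, which identifies $H^2_{\text{\'et}}(\mathcal{O}_F[1/l], \Z_l(n+1))$ with the $\Gamma$-coinvariants $\mathcal{X}(n+1)_\Gamma$ modulo error terms from the $\Gamma$-coinvariants of $H^1_{\text{\'et}}(\mathcal{O}_{F_\infty}[1/l], \Z_l(n+1))$. For $n \geq 1$ these error terms are finite and controlled by the Tate-twisted augmentation ideal, so annihilation of $\mathcal{X}(n+1)$ by $\Theta_n({\bf b}, {\bf f})$ would transfer to annihilation of the finite-level group, and hence of $K_{2n}(\mathcal{O}_F) \otimes \Z_l$ by the first step.

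The main obstacles, which together explain why Conjecture \ref{Conj. Coates-Sinnott} remains open in full generality, are three. First, the vanishing of the Iwasawa $\mu$-invariant $\mu_{F,l}$ is established by Ferrero--Washington only for $K = \Q$; without it, the Main Conjecture controls only the characteristic ideal and yields annihilation only of the subgroup $\mathrm{div}(K_{2n}(F)_l)$ of $l$-divisible elements, which is precisely the level of generality achieved in the present paper through the Stickelberger splitting construction. Second, upgrading annihilation of $\mathrm{div}(K_{2n}(F)_l)$ to annihilation of the full finite quotient $K_{2n}(\mathcal{O}_F)_l / \mathrm{div}(K_{2n}(F)_l)$ requires an equivariant refinement of the Main Conjecture (in the spirit of the ETNC for Tate motives) that controls descent error terms beyond the characteristic ideal, together with fine information on trivial zeros of the $l$-adic $L$-function at $s = -n$. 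Third, the $l = 2$ case requires sign-twisted Stickelberger elements and the 2-adic Main Conjecture, where subtleties at real places have historically resisted a clean statement. Removing these three obstructions is the outstanding task; conditional on them, the roadmap above yields the full conjecture.
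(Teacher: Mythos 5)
You were asked to prove the statement labeled Conjecture \ref{Conj. Coates-Sinnott}, which is the Coates--Sinnott conjecture itself. The paper offers no proof of it: it is stated precisely as an open conjecture, and the body of the paper establishes only the much weaker assertion that $\Theta_{n} ({\bf b}, {\bf f})$ annihilates the subgroup $div(K_{2n}(F)_l)$ of divisible elements, and even that only under additional hypotheses (that the first Stickelberger elements annihilate $K_{2}({\mathcal O}_{F_k})_l$ for all $k$, or that the Iwasawa invariant $\mu_{F,l}$ vanishes). So there is no proof in the paper against which to compare yours.

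Your proposal is not a proof either, and you say as much: the argument is explicitly conditional on (i) the vanishing of $\mu_{F,l}$ over general totally real base fields, (ii) an equivariant refinement of the Main Conjecture strong enough to control the descent error beyond characteristic ideals, and (iii) a workable $2$-adic theory. Each of these is a genuinely open problem, not a technical gap one could fill in; in particular, the passage from ``$\Theta_{n}({\bf b},{\bf f})$ lies in the characteristic ideal of $\mathcal{X}(n+1)$'' to ``$\Theta_{n}({\bf b},{\bf f})$ annihilates $\mathcal{X}(n+1)$'' is exactly where the $\mu$-invariant and the possible non-cyclicity of the Iwasawa module block you, and no amount of Hochschild--Serre bookkeeping at finite level recovers what is lost there. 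As a survey of why the conjecture is open and of the Iwasawa-theoretic route toward it --- which is indeed the route of Greither--Popescu in [GP], cited in the paper, under the hypothesis $\mu_{F,l}=0$ --- your account is accurate and well informed, with one small correction: the paper's own annihilation result for $div(K_{2n}(F)_l)$ (Theorems \ref{Theorem 5.4} and \ref{Theorem 5.8}) also requires the $\mu=0$ hypothesis or the annihilation hypothesis on $K_2$, so the divisible part does not come for free without it. As a proof of the stated conjecture, however, the proposal cannot be accepted, because the conjecture remains open.
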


\noindent This should be viewed as a higher analogue of the classical conjecture of Brumer.

\begin{conjecture}\label{Conj. Brumer}\, (Brumer) $\Theta_{0} ({\bf b}, {\bf f})$
annihilates $K_{0} ({\mathcal O}_F)_{\rm tors}=Cl(O_F).$
\end{conjecture}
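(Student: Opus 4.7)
The plan is to attack this along the lines that succeeded for the classical Stickelberger theorem (the case $K=\Q$, where the conjecture is a theorem of Stickelberger proved via Gauss sums and cyclotomic units), but lifted to Iwasawa theory over a general totally real base. First I would reduce, via a standard decomposition $Cl(O_F)=\bigoplus_l Cl(O_F)_l$, to proving that $\Theta_0(\mathbf{b},\mathbf{f})$ annihilates $Cl(O_F)_l$ for each prime $l$ separately. Character by character, the odd component of $\Theta_0(\mathbf{b},\mathbf{f})$ is a twist of the $L$-value $L(\chi^{-1},0)$, so Deligne--Ribet integrality places these elements in $\Z_l[G(F/K)]$; the task is to promote this integrality statement into an annihilation statement at the level of the group ring acting on the class group.

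Next I would pass to the cyclotomic $\Z_l$-tower $F_\infty/F$ and form the Iwasawa module $X_\infty := \varprojlim Cl(O_{F_n})_l$, together with the inverse limit of the Stickelberger elements inside $\Z_l[[G(F_\infty/K)]]$. The key input is the Equivariant Main Conjecture of Iwasawa theory, which identifies (an appropriate modification of) this Stickelberger element with the characteristic ideal of a suitable Iwasawa dual of $X_\infty$. Once this identification is in hand, the annihilation of $Cl(O_F)_l$ by $\Theta_0(\mathbf{b},\mathbf{f})$ follows by codescent from the Iwasawa tower to the level $F$, using that characteristic ideals annihilate the modules whose characteristic ideals they are, plus a careful tracking of Euler factors at primes dividing $\mathbf{b}$ and $\mathbf{f}$.

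The hard part is that the Equivariant Main Conjecture is genuinely open in full generality. The strongest available results (Wiles, Ritter--Weiss, and the paper [GP] cited in the excerpt) establish it after inverting $2$ and under the assumption that the Iwasawa $\mu$-invariant $\mu_{F,l}$ vanishes. Under those hypotheses, my plan yields the odd part of Brumer's conjecture in the form stated. For $l=2$, the presence of real places and the subtleties of negative $L$-values at even characters force one to either invoke the recent $2$-adic EMC results or to restrict the statement modulo $2$-torsion. Absent these inputs, the above sketch should be read as a conditional proof: the Iwasawa-theoretic reduction is essentially formal, but the unconditional annihilation statement remains the core open problem of the subject.
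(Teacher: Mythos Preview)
There is nothing to compare against: the statement you were asked to prove is stated in the paper as a \emph{conjecture} (Conjecture~\ref{Conj. Brumer}), not as a theorem, and the paper gives no proof of it. On the contrary, the paper repeatedly emphasizes that Brumer's conjecture is open over general totally real base fields and that this is precisely the obstruction that forces the authors to build their Stickelberger splitting map under a different hypothesis (annihilation of $K_2(\mathcal{O}_{F_k})_l$ by $\Theta_1(\mathbf{b},\mathbf{f}_k)$) rather than by bootstrapping from Brumer.

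Your write-up is honest about this: you correctly flag that the argument is conditional on the Equivariant Main Conjecture and on $\mu_{F,l}=0$, and that unconditionally the statement is open. That is the correct assessment. One technical caution: your line ``characteristic ideals annihilate the modules whose characteristic ideals they are'' is false as stated---a characteristic ideal need not annihilate its module (think of a module with a nontrivial pseudo-null piece, or a non-cyclic module over $\Lambda$). The passage from a Main Conjecture identification to actual annihilation of class groups requires the finer Fitting-ideal arguments (as in Greither, Kurihara, and the Greither--Popescu work you cite), not just characteristic-ideal bookkeeping. So even the conditional sketch has a genuine gap at that step.
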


\noindent Coates and Sinnott \cite{CS} proved that for the base field $K =
\Q$ the element $\Theta_{1} ({\bf b}, {\bf f})$ annihilates $K_{2}
({\mathcal O}_F)$ for $F/\Q$ abelian and ${\bf b}$ coprime to the
order of $K_{2} ({\mathcal O}_F).$ Moreover, they proved that
$\Theta_{n} ({\bf b}, {\bf f})$ annihilates the $l$--adic \'etale cohomology groups $K_{2n}^{et}
({\mathcal O}_F[1/l])$ for any odd prime $l$, any odd $n$ and
$F/\Q$ abelian CM extension. One of the ingredients used in the proof is the fact
that Brumer's conjecture holds true if $K=\Bbb Q$. This is the classical theorem of Stickelberger.
The passage from annihilation of
\'etale cohomology to that of $K$--theory in the case $n=1$ was possible due to the following theorem (see \cite{Ta2}, \cite{Co1} and \cite{Co2}.)

\begin{theorem}\label{Tate's Theorem} (Tate \cite{Ta2}) The $l$--adic Chern map gives a canonical isomorphism
$${K_{2} ({\mathcal O}_L) \otimes \Z_l
\stackrel{\cong}{\longrightarrow}
 K_{2}^{et} ({\mathcal O}_L[1/l])},
$$
for any number field $L$ and any odd prime $l$.
\end{theorem}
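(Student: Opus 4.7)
The plan is to reduce the statement to a mod-$l^n$ comparison, invoke the isomorphism of the $K_2$ Galois symbol at the generic point, and then descend from the field $L$ to the ring ${\mathcal O}_L[1/l]$ by comparing Quillen's localization sequence with the \'etale Gysin sequence.

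First, I would use finiteness to pass to finite coefficients. By Garland's theorem $K_2({\mathcal O}_L)$ is finite, so $K_2({\mathcal O}_L) \otimes \Z_l$ is its $l$--Sylow subgroup; the target $K_2^{et}({\mathcal O}_L[1/l]) \cong H^2_{et}({\mathcal O}_L[1/l], \Z_l(2))$ is likewise finite, since $H^0(G_L, \Q_l(2)) = 0$ by weight considerations. Moreover, Quillen's localization sequence relating ${\mathcal O}_L$ and ${\mathcal O}_L[1/l]$, together with the fact that $K_1$ of a residue field of characteristic $l$ has prime-to-$l$ order, yields $K_2({\mathcal O}_L) \otimes \Z_l \cong K_2({\mathcal O}_L[1/l]) \otimes \Z_l$. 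A routine Bockstein and inverse-limit argument then reduces the problem to producing, for each $n \geq 1$, a natural isomorphism
$$K_2({\mathcal O}_L[1/l])/l^n \xrightarrow{\sim} H^2_{et}({\mathcal O}_L[1/l], \mu_{l^n}^{\otimes 2})$$
compatible with the $l$--adic Chern class.

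Second, I would invoke the Galois symbol isomorphism $K_2(L)/l^n \xrightarrow{\sim} H^2(G_L, \mu_{l^n}^{\otimes 2})$ at the generic point; for number fields this is due to Tate, and in full generality to Merkurjev--Suslin. With the direct sums below ranging over non-archimedean primes $v$ of $L$ not lying above $l$, I would then compare the Quillen tame-symbol localization sequence
$$0 \to K_2({\mathcal O}_L[1/l])/l^n \to K_2(L)/l^n \xrightarrow{\partial} \bigoplus_{v} k(v)^*/l^n$$
with the \'etale Gysin sequence
$$0 \to H^2_{et}({\mathcal O}_L[1/l], \mu_{l^n}^{\otimes 2}) \to H^2(G_L, \mu_{l^n}^{\otimes 2}) \to \bigoplus_{v} H^1(k(v), \mu_{l^n}).$$
In the resulting ladder the middle vertical is the Galois symbol (an isomorphism by the above), the right vertical is the classical Kummer isomorphism $k(v)^*/l^n \cong H^1(k(v), \mu_{l^n})$, and the compatibility of the boundary maps with the tame symbol (Quillen's construction on the $K$--theory side, purity on the \'etale side) makes the ladder commute; a five-lemma chase then delivers the desired isomorphism on the left.

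The main obstacle is the second step --- the Galois symbol isomorphism in weight two, which is the deep external input. Once granted, what remains is bookkeeping: the exactness on the left of the Quillen mod-$l^n$ sequence (which uses the vanishing $K_2(k(v)) = 0$ by Quillen for finite fields) and the compatibility of the Chern classes with the boundary maps on both sides.
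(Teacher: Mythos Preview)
The paper does not prove this statement at all: Theorem~\ref{Tate's Theorem} is stated with attribution to Tate \cite{Ta2} (see also \cite{Co1}, \cite{Co2}) and is used as a black box throughout, so there is no in-paper argument to compare against.

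That said, your outline is a reasonable reconstruction of how the result is obtained. The reduction from ${\mathcal O}_L$ to ${\mathcal O}_L[1/l]$ via Quillen localization (using that $K_1(k_v)$ has order prime to $l$ for $v\mid l$), the passage to finite coefficients via Garland's finiteness of $K_2({\mathcal O}_L)$, and the comparison of the Quillen localization row with the \'etale Gysin row are all standard and correct. You have also correctly isolated the genuine content: the Galois symbol isomorphism $K_2(L)/l^n \cong H^2(G_L,\mu_{l^n}^{\otimes 2})$ for number fields, which \emph{is} Tate's theorem in \cite{Ta2}. So your argument is not an independent proof so much as a derivation of the integral statement from the field-level statement that Tate actually proved; this is fine, and is indeed how the passage is usually made. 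One minor point: your appeal to the ``five lemma'' is really just a diagram chase on a three-term ladder with injective left ends, and you should note that the commutativity of the right-hand square (tame symbol versus \'etale residue under the Chern/Galois symbol) is itself a nontrivial compatibility, though well documented in the literature (e.g.\ Soul\'e \cite{So1}).
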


The following conjecture (generalizing Tate's theorem) is closely
related to that of Coates and Sinnott.

\begin{conjecture}\label{Conj. Quillen-Lichtenbaum} \, (Quillen-Lichtenbaum)
For any number field $L$ any $m \geq 1$ and any odd prime $l$ there is a natural
$l$--adic Chern map isomorphism
\begin{eqnarray}{K_{m} ({\mathcal O}_L) \otimes \Z_l
\stackrel{\cong}{\longrightarrow}
 K_{m}^{et} ({\mathcal O}_L[1/l])}
\end{eqnarray}
\end{conjecture}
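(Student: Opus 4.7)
The plan is to factor both sides of the conjectured isomorphism through motivic cohomology, reducing the problem to the Beilinson--Lichtenbaum conjecture (equivalently, to the Bloch--Kato norm residue isomorphism). First, I would appeal to the motivic-to-$K$-theory spectral sequence of Bloch--Lichtenbaum and Friedlander--Suslin,
\[
E_2^{p,q} = H^{p-q}_M({\mathcal O}_L[1/l], \Z_l(-q)) \Longrightarrow K_{-p-q}({\mathcal O}_L[1/l]) \otimes \Z_l,
\]
together with the Thomason / Dwyer--Friedlander descent spectral sequence for \'etale $K$--theory,
\[
E_2^{p,q} = H^{p-q}_{et}({\mathcal O}_L[1/l], \Z_l(-q)) \Longrightarrow K^{et}_{-p-q}({\mathcal O}_L[1/l]).
\]
The change--of--topology map induces a morphism between these spectral sequences whose effect on the abutments is, up to standard normalizations, the $l$--adic Chern map. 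Hence the problem reduces to showing that the comparison
\[
H^p_M({\mathcal O}_L[1/l], \Z_l(q)) \longrightarrow H^p_{et}({\mathcal O}_L[1/l], \Z_l(q))
\]
is an isomorphism for $p \leq q$ and injective for $p = q+1$, that is, to Beilinson--Lichtenbaum in the weights that contribute to $K_m$.

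The second step is to reduce Beilinson--Lichtenbaum to Bloch--Kato. By the formalism of Geisser--Levine and Suslin--Voevodsky, the cone of the comparison map of motivic complexes $\Z(q)_M \to \tau_{\leq q} R\pi_\ast \mu_l^{\otimes q}$ has stalkwise vanishing cohomology in the required range precisely when, for every field $F$ with $\mathrm{char}(F) \neq l$ and every $n \leq q$, the norm residue symbol
\[
K^M_n(F)/l \longrightarrow H^n_{et}(F, \mu_l^{\otimes n})
\]
is an isomorphism. The passage from fields back to the arithmetic base ${\mathcal O}_L[1/l]$ is then handled by a standard Gersten / Bloch--Ogus resolution on this regular one--dimensional scheme.

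The hard part is the proof of Bloch--Kato itself, which essentially carries all the depth of the statement. For $n = 2$ it is the theorem of Merkurjev--Suslin; in general it is the theorem of Voevodsky and Rost, whose proof rests on Rost's construction of norm varieties, on the motivic Steenrod algebra, and on the full force of Voevodsky's cohomological operations on motivic cohomology. Granted Bloch--Kato, the Beilinson--Lichtenbaum comparison follows, the two spectral sequences above become isomorphic in the range contributing to $K_m$, and the $l$--adic Chern map $K_m({\mathcal O}_L) \otimes \Z_l \to K^{et}_m({\mathcal O}_L[1/l])$ is thereby identified as an isomorphism. A final bookkeeping step is to verify that inverting $l$ on the left does no harm, via the localization sequence of Quillen $K$--theory combined with the Borel--Quillen finiteness of $K_\ast({\mathcal O}_L)$ and the known $K$--theory of the residue fields at primes above $l$.
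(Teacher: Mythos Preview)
The statement you are attempting to prove is labelled and treated in the paper as a \emph{conjecture}; the paper offers no proof whatsoever. Immediately after stating Conjecture~\ref{Conj. Quillen-Lichtenbaum} the authors write only that ``there is hope that recent work of Suslin, Voyevodsky, Rost, Friedlander, Morel, Levine, Weibel and others will lead to a proof of the Quillen--Lichtenbaum conjecture.'' So there is nothing in the paper against which to compare your argument: the paper neither proves nor sketches a proof of this statement, and all of its main results are formulated either conditionally on Quillen--Lichtenbaum or entirely independently of it.

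That said, your outline is the now-standard route by which Quillen--Lichtenbaum is derived from the Bloch--Kato (norm residue) theorem of Voevodsky and Rost: compare the motivic and \'etale Atiyah--Hirzebruch spectral sequences via the change-of-topology map, reduce to Beilinson--Lichtenbaum in the relevant range, and then invoke Geisser--Levine / Suslin--Voevodsky to reduce Beilinson--Lichtenbaum to Bloch--Kato. As a high-level sketch this is correct, but you should be aware that essentially all of the content lies in the black boxes you invoke (the motivic spectral sequence over Dedekind bases, the Geisser--Levine reduction, and above all the Voevodsky--Rost theorem), and that turning this sketch into an actual proof requires citing those results precisely and checking the degeneration and convergence issues carefully. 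For the purposes of this paper, however, the point is simply that no such proof is present or claimed: the authors use Quillen--Lichtenbaum only as a hypothesis.
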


\noindent If the Quillen-Lichtenbaum conjecture is proved,
then the $l$--primary
part of the Coates-Sinnott conjecture is established for $F/\Q$
totally real abelian, $l$ odd and $n$ odd, via the results of \cite{CS}
mentioned above. There is hope that
recent work of Suslin, Voyevodsky, Rost,
Friedlander, Morel, Levine, Weibel and others will lead to a proof of
the Quillen-Lichtenbaum conjecture.
\medskip

A different approach towards the Coates-Sinnott conjecture was
taken upon in \cite{Ba1}, in the case $K = \Q.$ Namely, in Chap.
IV loc. cit., the first author constructed the Stickelberger
splitting map $\Lambda$ of the boundary map $\partial_F$ in the
Quillen localization sequence
$$ 0 \stackrel{}{\longrightarrow} K_{2n} ({\mathcal O}_{F})_l
\stackrel{}{\longrightarrow} K_{2n} (F)_l
{{\stackrel{\partial_F}{\longrightarrow}} \atop
{{\stackrel{\Lambda}{\longleftarrow}}}} \bigoplus_{v} K_{2n-1}
(k_v)_l \stackrel{}{\longrightarrow} 0\,, $$ such that $\Lambda
\circ \partial_F$ is the multiplication by $\Theta_{n} ({\bf b},
{\bf f}).$ This property implies that $\Theta_{n} ({\bf b}, {\bf
f})$ annihilates the group $div (K_{2n} (F)_l)$ of divisible elements
in $K_{2n}(F)_l$, which is contained
in $K_{2n} ({\mathcal O}_F)_l$ (obvious from the exact sequence above and
the finiteness of  $K_{2n-1}
(k_v)_l$, for all $v$.) The construction of $\Lambda$ was
done without appealing to \'etale cohomology and the
Quillen-Lichtenbaum conjecture. However, the construction in loc.
cit. was based on the fact that Brumer's Conjecture is known to
hold for abelian extensions of $\Q$ (Stickelberger's
theorem). Since Brumer's conjecture is not yet proved
over arbitrary totally real base fields, the construction of
$\Lambda$ in loc. cit. cannot be generalized.

In this paper, we take yet another approach to the construction of the
map $\Lambda$ for arbitrary totally real base fields. Namely, we
work under the assumption that the Stickelberger elements
$\Theta_{1} ({\bf b}, {\bf f}_k)$ annihilate $K_{2} ({\mathcal
O}_{F_k})_l$ for each $k$, where $F_k := F(\mu_{l^k})$ and ${\bf
f}_k$ is the conductor of $F_k/F$. At this level of generality and
under this assumption, the construction of $\Lambda$ uses
different techniques and is more elaborate then the one in \cite{Ba1}.
In the construction, we need to operate at all levels of the
Iwasawa tower $F_k,$ for $k \geq 1$ and for every prime $v.$ In
\cite{Ba1}, the construction at each prime $v$ used only a certain level
of the Iwasawa tower. However, our efforts pay off. Even in the
particular case $K = \Q,$ this new Stickelberger splitting map
construction improves upon the results in \cite{Ba1}, where the case $l
| n$ was only settled up to a factor of $l^{v_l (n).}$ This factor
is completely eliminated in this paper.
Moreover, it was shown in
[Po] that for many examples of extensions of arbitrary totally
real base fields the Stickelberger element $\Theta_{1} ({\bf b},
{\bf f}_k)$ indeed annihilates $K_{2} ({\mathcal O}_{F_k})_l.$
Also, Greither-Popescu have recently showed in [GP] that under
the hypothesis that the Iwasawa $\mu$--invariant associated to $F$ and $\ell$
vanishes (a classical conjecture of Iwasawa), then $\Theta_{n}({\bf b},
{\bf f}_k)$ annihilates $K_{2n}^{et}(O_F[1/\ell])$, for all odd primes
$\ell$ and all odd $n$. In particular, if combined with Tate's theorem, this result
implies that
$\Theta_{1} ({\bf b},
{\bf f}_k)$ annihilates $K_{2} ({\mathcal O}_{F_k})_l$, for all $k$, under the above hypothesis
for $F$.
This way, we get annihilation results of the group $div
(K_{2n} (F)_l)$ for extensions $F/K$ with arbitrary totally real
base field $K$ (see Theorems \ref{Theorem 5.4} and \ref{Theorem 5.8}.)

In \S6, we describe briefly  the construction of the
Stickelberger splitting $\Lambda^{et}$ for the \' etale
$K$-theory which is a direct analogue of the map $\Lambda.$
The \'etale Stickelberger splitting map $\Lambda^{et}$ has similar properties and applications
as  $\Lambda$. Finally, in \S7, we construct both $\Lambda$ and $\Lambda^{et}$ under the more general assumption
that for some arbitrary but fixed natural number $m>0$, the $m$--th Stickelberger elements
$\Theta_{m} ({\bf b}, {\bf f}_k)$ annihilate $K_{2m} ({\mathcal
O}_{F_k})_l$ (respectively $K^{et}_{2m} ({\mathcal
O}_{F_k})_l$) for each $k$.
\medskip

We conclude this introduction with a few paragraphs showing that the groups
of divisible elements in the $K$--theory
of number fields lie at the heart of several important conjectures in number theory,
trying to justify
this way our efforts to understand their Galois-module structure in terms of special values of global $L$--functions.
In 1988, Warren Sinnott pointed out to the first author that
Stickelberger's Theorem for an abelian extension $F/\Q$
 or, more generally, Brumer's conjecture for a CM extension $F/K$
of a totally real number field  $K$ is equivalent to the existence of
a Stickelberger splitting map $\Lambda$ in the following basic exact sequence
$$
0 \stackrel{}{\longrightarrow} {\mathcal O}_{F}^{\times}
\stackrel{}{\longrightarrow} F^{\times}
{{\stackrel{\partial_F}{\longrightarrow}} \atop
{{\stackrel{\Lambda}{\longleftarrow}}}} \bigoplus_{v} \Z
\stackrel{}{\longrightarrow} Cl(O_F) \stackrel{}{\longrightarrow} 0
.$$
This means that $\Lambda$ is a group homomorphism, such that
$\partial_F \circ \Lambda$ is the multiplication by
$\Theta_{0} ({\bf b}, {\bf f}).$ Obviously, the above exact sequence
is the lower part of the Quillen localization sequence in $K$--theory, since
$K_1 ({\mathcal O}_{F}) = {\mathcal O}_{F}^{\times},$
$K_1 (F) = F^{\times},$ $K_0 (k_v) = \Z,$ $K_0 ({\mathcal O}_{F})_{tors} = Cl(O_F)$
and Quillen's $\partial_F$ is the direct sum of the valuation maps, just as above.

Further, by \cite{Ba2} p. 292 we observe that for any prime $l > 2$,
the annihilation of $div (K_{2n} (F)_l)$ by $\Theta_{n} ({\bf b}, {\bf f})$ is equivalent to
the existence of a ``splitting'' map $\Lambda$ in the following exact sequence
$$
0 \stackrel{}{\longrightarrow} K_{2n} ({\mathcal O}_{F}) [l^k]
\stackrel{}{\longrightarrow} K_{2n} (F) [l^k]
{{\stackrel{\partial_F}{\longrightarrow}} \atop
{{\stackrel{\Lambda}{\longleftarrow}}}} \bigoplus_{v} K_{2n - 1} (k_v) [l^k]
\stackrel{}{\longrightarrow} div
(K_{2n} (F)_l) \stackrel{}{\longrightarrow} 0
$$
such that $\partial_F \circ \Lambda$ is the multiplication by
$\Theta_{n} ({\bf b}, {\bf f})$, for any $k\gg 0$.
Hence, the group of divisible elements $div (K_{2n} (F)_l)$ is a direct analogue
of the $l$--primary part $Cl(O_F)_l$ of the class group.
Any two such ``splittings'' $\Lambda$ differ by a homomorphism
in ${\rm Hom} (\bigoplus_{v} K_{2n - 1} (k_v) [l^k], \,\, K_{2n} ({\mathcal O}_{F}) [l^k]).$
Moreover, the Coates-Sinnott conjecture is equivalent to the existence of a ``splitting'' $\Lambda$,
such that $\Lambda \circ \partial_F$ is the multiplication
by $\Theta_{n} ({\bf b}, {\bf f}).$ If the Coates-Sinnott conjecture holds, 
then such a ``splitting'' $\Lambda$ is unique and
satisfies the property that $\partial_F \circ \Lambda$ is equal to the multiplication
by $\Theta_{n} ({\bf b}, {\bf f})$. This is due to the fact that $div(K_{2n} (F)_l) \subset
K_{2n} ({\mathcal O}_{F})_l.$ Clearly, in the case $div(K_{2n} (F)_l) =
K_{2n} ({\mathcal O}_{F})_l$, our map $\Lambda$ also has the property that
$\Lambda \circ \partial_F$ equals multiplication
by $\Theta_{n} ({\bf b}, {\bf f}).$ Observe that if the Quillen-Lichtenbaum
conjecture holds, then by Theorem 4 in \cite{Ba2}, we have
$$div(K_{2n} (F)_l) =
K_{2n} ({\mathcal O}_{F})_l\,\, \Leftrightarrow \,\, \left\vert
\frac{\prod_{v | l} w_n (F_v)}{w_n (F)} \right\vert_{l}^{-1} = 1.$$
In particular, for $F = \Q$ and $n$ odd, we have $w_n (\Q) = w_n (\Q_l) = 2$.
Hence, according to the Quillen-Lichtenbaum conjecture, for any $l > 2$
we should have $div (K_{2n} (\Q)_l) = K_{2n} (\Z)_l.$

Now, let $A := Cl(\Z [\mu_l])_l$ and let
$A^{[i]}$ denote the eigenspace corresponding  to the $i$--th power of the
Teichmuller character $\omega\, :\, G(\Q(\mu_l) / \Q) \rightarrow (\Z/l\Z)^{\times}.$
Consider the following classical conjectures in cyclotomic field theory.
\begin{conjecture}\label{Conj. Kumer-Vandiver} \, (Kummer-Vandiver)
\begin{eqnarray}
A^{l - 1 -n} = 0 \quad \text{for all n even and} \quad 0 \leq n \leq l-1
\nonumber
\end{eqnarray}
\end{conjecture}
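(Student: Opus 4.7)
The statement above is the classical Kummer--Vandiver conjecture, which remains open. Rather than claim a proof, I would describe the natural lines of attack and be explicit about why each one stalls.

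The most direct approach is via the analytic class number formula for $\Q(\mu_l)^+$: the plus part $A^+=\bigoplus_{i\text{ even}}A^{[i]}$ of the $l$-Sylow of $Cl(\Z[\mu_l])$ is the cokernel of the map from cyclotomic units $C^+$ into all units $E^+$, so one would try to show $l\nmid [E^+:C^+]$ eigenspace by eigenspace. Equivalently, via Iwasawa theory one could try to input the Main Conjecture (Mazur--Wiles), the Ferrero--Washington theorem $\mu=0$ for abelian fields, and reflection (Leopoldt's Spiegelungssatz), hoping to transfer the known structural control of $A^{[l-i]}$ for $i$ odd (Herbrand--Ribet) to information about $A^{[i]}$ for $i$ even. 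The first step in this plan is to set up the reflection inequalities carefully, and then to combine them with the Main Conjecture for the minus side to try to force vanishing on the plus side.

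The second step would be to analyze the Galois action on $E^+/(E^+)^l$ versus $C^+/(C^+)^l$ in each even eigenspace, perhaps by constructing explicit cyclotomic units in the $\omega^i$-component whose independence mod $l$ can be checked using the $\Gamma$-function-like Gauss sum distribution relations. This is morally what underlies the extensive numerical verification of the conjecture.

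The hard part --- and the reason the conjecture has resisted all attempts since 1849 --- is the complete absence of an $L$-function or deformation-theoretic source on the plus side: for $\chi$ even and $n$ even one has $L(1-n,\chi)=0$, so the Herbrand--Ribet/Ribet construction of unramified extensions from Eisenstein congruences has no analogue, and reflection only gives one-directional implications. Without a substitute for this missing $L$-value, neither the Iwasawa-theoretic route nor the cyclotomic-unit route can be closed up. I would therefore expect any serious attempt to require a genuinely new input (for instance, a link between $A^+$ and a space of modular forms whose congruences are governed by something other than $p$-adic $L$-values), and I have no such input to propose.
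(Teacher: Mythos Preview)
Your assessment is correct: the statement is the Kummer--Vandiver conjecture, which is open, and the paper does not prove it either. It is introduced as Conjecture~1.5 purely as motivation, and the only thing the paper does with it is record (citing \cite{BG1}, \cite{BG2}) the equivalence
\[
A^{[l-1-n]} = 0 \iff div\,(K_{2n}(\Q)_l) = 0 \quad \text{for } n \text{ even, } 0 \le n \le l-1,
\]
so as to justify interest in the groups $div\,(K_{2n}(F)_l)$. No argument toward the conjecture itself is offered or implied.

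Your discussion of possible strategies (cyclotomic units and $[E^+:C^+]$, the Main Conjecture plus reflection, the absence of an $L$-value on the plus side) is accurate and well beyond what the paper attempts; it is useful context but orthogonal to the paper's purpose. There is no gap to point out because there is no proof to compare against --- you correctly declined to fabricate one.
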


\begin{conjecture}\label{Conj. Iwasawa} \, (Iwasawa)
\begin{eqnarray}
A^{l - 1 -n} \quad \text{is cyclic for all $n$ odd, such that} \quad
1 \leq n \leq l-2
\nonumber
\end{eqnarray}
\end{conjecture}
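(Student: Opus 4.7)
Since this is the celebrated Iwasawa conjecture, which remains open, the paper states it only as motivational context and does not attempt a proof. A reasonable plan of attack runs through the Iwasawa theory of the cyclotomic $\Z_l$--extension. I would form the Iwasawa module $X_\infty := \varprojlim_k A_k$, where $A_k$ is the $l$--Sylow subgroup of the class group of $\Q(\mu_{l^{k+1}})$, and decompose it into $\omega$--eigenspaces $X_\infty^{[i]}$ over the Iwasawa algebra $\Lambda = \Z_l[[T]]$. Since $A^{[l-1-n]}$ is the module of $T$--coinvariants of $X_\infty^{[l-1-n]}$, it would suffice to show that $X_\infty^{[l-1-n]}$ is cyclic as a $\Lambda$--module for each odd $n$ with $1\leq n\leq l-2$; then Nakayama's lemma delivers the stated cyclicity of $A^{[l-1-n]}$.

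The first step would invoke the Mazur--Wiles main conjecture, which identifies the characteristic ideal of $X_\infty^{[l-1-n]}$ with the Iwasawa series $f^{[l-1-n]}(T)$ of the Kubota--Leopoldt $l$--adic $L$--function attached to $\omega^{l-1-n}$. This pins down the order of $A^{[l-1-n]}$ via evaluation at $T=0$, and therefore yields the correct cardinality, but supplies no structural information. The next step would try to upgrade size to shape by exploiting Stickelberger-type annihilation, in the same spirit as the Stickelberger splitting map of this paper: the odd part of the Stickelberger ideal annihilates $X_\infty^{-}$, and under the main conjecture its image in $\Lambda/(f^{[l-1-n]}(T))$ is the whole quotient, so any lift of a Stickelberger element to a $\Lambda$--generator would force cyclicity.

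The main obstacle is precisely this last passage. Without an auxiliary input such as the Kummer--Vandiver conjecture, which forces $A^{[n]}=0$ and therefore (via Iwasawa's own reflection-type structure theorem for $X_\infty^{-}$) cyclicity of each odd eigenspace, there is no way to prevent $f^{[l-1-n]}(T)$ from having a multiple root modulo $l$; in that pathology $X_\infty^{[l-1-n]}$ could split into several $\Lambda$--cyclic pieces while still exhibiting the correct total order. This is exactly the gap between controlling orders of Iwasawa modules (via $L$--values) and controlling their module structure, and it is the same obstruction that prevents the main conjecture from implying Kummer--Vandiver. Any genuine attack would, I expect, have to refine a Kolyvagin-style Euler system argument built from cyclotomic units so as to produce an explicit single $\Lambda$--generator of $X_\infty^{[l-1-n]}$, rather than merely a chain of annihilators bounding its order.
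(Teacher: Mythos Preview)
You are correct that the paper does not prove this statement: it is labeled as a conjecture and is presented purely as motivation, with no proof attempted. The paper's only contribution regarding it is the reformulation
\[
A^{[l-1-n]}\text{ is cyclic} \quad \Longleftrightarrow \quad div(K_{2n}(\Q)_l)\text{ is cyclic},
\]
for odd $n$ with $1\le n\le l-2$, cited from \cite{BG1}, \cite{BG2}. Your sketch of a possible Iwasawa-theoretic attack, and your identification of the obstruction (that the main conjecture controls orders but not module structure, so a repeated root of $f^{[l-1-n]}(T)$ modulo $l$ cannot be excluded without Kummer--Vandiver or a refined Euler-system input), is accurate and goes well beyond anything the paper attempts. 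There is nothing to compare against, since the paper offers no strategy of its own.
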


\noindent We can state the Kummer-Vandiver and Iwasawa
conjectures in terms of divisible elements in $K$--theory of $\Q$ (see \cite{BG1} and \cite{BG2}):
\begin{itemize}
\item[(1)]
$A^{l - 1 -n} = 0$ $\Leftrightarrow$ $div (K_{2n} (\Q)_l) = 0, \,\,\,\, \text{ for all } n$
\text{even, with} $0 \leq n \leq l-1.$
\item[(2)]
$A^{l - 1 -n}$ \text{is cyclic} $\Leftrightarrow$
$div (K_{2n} (\Q)_l)$\text{ is cyclic, for all $n$
odd, with} $1 \leq n \leq l-2.$
\end{itemize}

Finally, we would like to point out that the groups of divisible elements discussed in this paper are also related
to the Quillen-Lichtenbaum conjecture.
Namely, by comparing the exact sequence
of \cite{Sch}, Satz 8 with the exact sequence of \cite{Ba2}, Theorem 2 we conclude
that the Quillen-Lichtenbaum conjecture
for the $K$-group $K_{2n} (F)$ (for any number field $F$ and any prime $l > 2$)
holds if and only if
$$div (K_{2n} (F)_l) = K_{2n}^{w} ({\mathcal O}_{F})_l$$
where $K_{2n}^{w} ({\mathcal O}_{F})_l$ is the wild kernel defined in
\cite{Ba2}.

\noindent
\section{Basic facts about the Stickelberger ideals}
Let $F / K$ be abelian CM extension of a totally real number field
$K.$ Let ${\bf f}$ be the conductor of $F / K$ and let $K_{{\bf
f}} / K$ be the ray class field extension corresponding to ${\bf
f}.$ Let $G_{\bf f} := G(K_{{\bf f}} / K).$ Every element of
$G_{\bf f}$ is the Frobenius morphism $\sigma_{{\bf a}}$, for some
ideal $\bf a$ of ${\mathcal O}_K$, coprime to the conductor $\bf
f$. Let $({\bf a}, F)$ denote the image of $\sigma_{{\bf a}}$ in
$G(F / K)$ via the natural surjection $G_{\bf f} \rightarrow G(F /
K).$ Choose a prime number $l.$
\medskip

\noindent With the usual notations, we let $I({\bf f}) / P_{1}
({\bf f})$ be the ray class group of fractional ideals in $K$
coprime to ${\bf f}.$ Let ${\bf a}$ and ${\bf a}^{\prime}$ be two
fractional ideals in $I({\bf f}).$ The symbol ${\bf a} \equiv {\bf
a}^{\prime}  \mod {\bf f}$ will mean that ${\bf a}$ and ${\bf
a}^{\prime}$ are in the same class modulo $P_{1} ({\bf f}).$ For
$\rm{Re} (s) > 1$ consider the partial zeta function of \cite{C},
p. 291

\begin{equation}
\zeta_{{\bf f}} ({\bf a}, s) := \sum_{{\bf c} \equiv {\bf a}  \,
\rm{mod} \,  {\bf f} } \,\, {\frac{1} {N{\bf c}^s}}\,,
\label{2.0}\end{equation} where the sum is taken over the integral
ideals $\bf c\in I(\bf f)$ and $N\bf c$ denotes the usual norm of
the integral ideal $\bf c$. \noindent The partial zeta
$\zeta_{{\bf f}} ({\bf a}, s)$ can be meromorphically continued to
the complex plane with a single pole at $s = 1.$ For $s\in\Bbb
C\setminus\{1\}$, consider the Sickelberger element of [C], p.
297,
\begin{equation}
\Theta_{s} ({\bf b}, {\bf f}) := (N {\bf b}^{s+1} - ({\bf b}, \,
F)) \sum_{\bf a}\, \zeta_{{\bf f}} ({\bf a}, -s) ({\bf a},\,
F)^{-1}\in \Bbb C[G(F/K)] \label{2.1}\end{equation} where the summation is over a
finite set $\mathcal S$ of ideals ${\bf a}$ of ${\mathcal O}_{K}$
coprime to ${\bf f}$ such that the Artin map
$$\mathcal S\longrightarrow G(K_{\bf f} /
K)\,,\quad \bf a\longrightarrow \sigma_{\bf a}$$ is bijective. The
element $\Theta_{s} ({\bf b}, {\bf f})$ can be written in the
following way

\begin{equation}
\Theta_{s} ({\bf b}, {\bf f}) :=
\sum_{\bf a}\, \Delta_{s+1} ({\bf a}, {\bf b}, {\bf f} ) ({\bf a} ,\, F)^{-1},
\label{2.7}\end{equation}
where
\begin{equation}
\Delta_{s+1} ({\bf a} , {\bf b} , {\bf f} ) :=
N {\bf b} ^{s+1} \zeta_{{\bf f}} ({\bf a} , -s) -
\zeta_{{\bf f}} ({\bf a} {\bf b} , -s).
\label{2.8}\end{equation}

Arithmetically, the Stickelberger elements $\Theta_{s} ({\bf b},
{\bf f})$ are most interesting for values $s = n \in \N \cup
\{0\}.$ If ${\bf a}, {\bf b}, {\bf f}$ are integral ideals, such
that ${\bf a} {\bf b}$ is coprime to ${\bf f}$,  then Deligne and
Ribet [DR] proved that $\Delta_{n+1} ({\bf a}, {\bf b}, {\bf f})$
are $l$-adic integers for all primes $l \not |\, N {\bf b}$ and
all $n \geq 0$. Moreover,

\begin{equation}
\Delta_{n+1} ({\bf a} , {\bf b} , {\bf f} ) \equiv N ({\bf a} {\bf
b} )^n \Delta_{1} ({\bf a}, {\bf b}, {\bf f}) \mod w_{n}(K_{\bf
f}). \label{2.9}\end{equation} As usual, if $L$ is a number field,
then $w_{n}(L)$ is the largest number $m \in \N$ such that the
Galois group $G(L (\mu_m) / L)$ has exponent dividing $n.$ Note
that $$w_{n}(L) = |H^0 (G(\overline{L}/L), \, \Q / \Z(n))|\,,$$
where $\Q /\Z (n) := \oplus_{l} \Q_l / \Z_l (n).$ By Theorem 2.4
of [C],  we have
$$\Theta_{n} ({\bf b}, {\bf f}) \in \Z[G(F/ K)],$$ whenever
${\bf b}$ is coprime to $w_{n+1} (F).$ The ideal of $\Z [G (F /
K)]$ generated by the elements $\Theta_{n} ({\bf b}, {\bf f})$,
for all integral ideals ${\bf b}$ coprime to $w_{n+1} (F)$ is
called the $n$-th Stickelberger's ideal for $F/K.$

\medskip
When $K \subset F \subset E$ is a tower of finite abelian
extensions then $Res_{E/F} \, : \, G(E/K) \rightarrow G(F/K)$
denotes the restriction map. We will also use the notation
$Res_{E/F}: \Bbb C[G(E/K)]\rightarrow \Bbb C[G(F/K)]$ for the
restriction map at the level of the corresponding group rings.
 When ${\bf f}\, | \, {\bf f}^{\prime}$ and ${\bf f}$ and ${\bf
f}^{\prime}$ are divisible by the same prime ideals of ${\mathcal
O}_K$ then,  for all ${\bf b}$ coprime to ${\bf f}$,  we have the
following equality (see \cite{C} Lemma 2.1, p. 292).

\begin{equation}
Res_{K_{{\bf f}^{\prime}}/K_{{\bf f}}} \,
\Theta_{s} ({\bf b}, {\bf f}^{\prime}) =
\Theta_{s} ({\bf b}, {\bf f}).
\label{2.2}\end{equation}
\medskip

\noindent If ${\bf l}$ is a prime ideal of $O_K$ coprime to ${\bf
f}$, then

\begin{equation}
\zeta_{{\bf f}} ({\bf a}, s) :=
\sum_{{{\bf c} \equiv {\bf a}  \, \rm{mod} \,
{\bf f}} \atop {{\bf l}\, \not| \, {\bf c}} }
\,\, {\frac{1}{N{\bf c}^s}} +
\sum_{{{\bf c} \equiv {\bf a} \, \rm{mod} \,  {\bf f}} \atop
{{\bf l}\, | \, {\bf c}}} \,\, {\frac{1}{N{\bf c}^s}}
\label{2.3}\end{equation}

\noindent
Observe that:
\begin{equation}
\sum_{{{\bf c} \equiv {\bf a}  \, \rm{mod} \,  {\bf f}} \atop
{{\bf l}\, \not| \, {\bf c}} } \,\, {\frac{1}{N{\bf c}^s}} =
\sum_{{{\bf a}^{\prime} \, \rm{mod} \,
{\bf l f} } \atop { {\bf a}^{\prime} \equiv {\bf a}
 \, \rm{mod} \,  {\bf f}} }  \sum_{{\bf c} \equiv {\bf a}^{\prime}
 \, \rm{mod} \,  {\bf l f} }
\,\, {\frac{1}{N{\bf c}^s}} =
\sum_{{{\bf a}^{\prime} \, \rm{mod} \,  {\bf l f} } \atop { {\bf a}^{\prime}
\equiv {\bf a}  \, \rm{mod} \,  {\bf f} }}
\zeta_{{\bf l f}} ({\bf a}^{\prime}, s)
\label{2.4}\end{equation}
\medskip

\noindent Let us fix a finite $\mathcal S$ of integral ideals
${\bf a}$ in $I(\bf f)$ as above. Observe that every class
corresponding to an integral ideal ${\bf a}$ modulo $P_{1} ({\bf
f})$ can be written uniquely as a class ${\bf l}{\bf
a}^{\prime\prime}$ modulo $P_{1} ({\bf f})$,  for some ${\bf
a}^{\prime\prime}$ from our set $\mathcal S$ of chosen integral
ideals. Since $\sigma_{{\bf l}} \in G(K_{\bf f} / K),$ this
establishes a one--to--one correspondence between classes ${\bf
a}$ modulo $P_{1} ({\bf f})$ and ${\bf a}^{\prime\prime}$ modulo
$P_{1} ({\bf f}).$ If ${\bf l}\, | \,{\bf c}$,  we put ${\bf c} =
{\bf l} {\bf c}^{\prime}.$ Hence, we have the following equality.

\begin{equation}
\sum_{{{\bf c} \equiv {\bf a}  \, \rm{mod} \,   {\bf f}} \atop
{{\bf l}\, | \, {\bf c}} }
\,\, {\frac{1}{N{\bf c}^s}} \,\, = \,\,
{\frac{1}{N{\bf l}^s}}\sum_{{\bf a}^{\prime\prime} \equiv {\bf c}^{\prime}
\, \rm{mod} \,
{\bf f} } \,\, {\frac{1}{N{\bf c}^{\prime \, s}}} =
{\frac{1}{N{\bf l}^{s}}}
\zeta_{{\bf f}} ({\bf a}^{\prime\prime}, s)
\label{2.5}\end{equation}

\noindent Formulas (\ref{2.3}), (\ref{2.4}) and (\ref{2.5}) lead
to the following equality:

\begin{equation}
\zeta_{{\bf f}} ({\bf a}, s) -
{\frac{1}{N{\bf l}^{s}}} \zeta_{{\bf f}} ({\bf l^{-1}} {\bf a}, s)
= \sum_{{{\bf a}^{\prime} \, \rm{mod} \,  {\bf l f} } \atop { {\bf a}^{\prime}
\equiv {\bf a}  \, \rm{mod} \,  {\bf f} }}
\zeta_{{\bf l f}} ({\bf a}^{\prime}, s).
\label{2.51}\end{equation}

For all ${\bf f}$ coprime to ${\bf l}$ and for all ${\bf b}$
coprime to ${\bf l f}$, equality (\ref{2.51}) gives:

\begin{equation}
Res_{K_{{\bf l} {\bf f}}/K_{{\bf f}}} \,\, \Theta_{s} ({\bf b}, {\bf l} {\bf f})
= (1 - ({\bf l}, \, F)^{-1} N{\bf l}^{s}) \,
\Theta_{s} ({\bf b}, {\bf f})
\label{2.52}\end{equation}
\medskip

\noindent
Indeed we easily check that:
\begin{equation}
Res_{K_{{\bf l} {\bf f}}/K_{{\bf f}}} \, (N {\bf b}^{s+1} - ({\bf b}, \, F))
\sum_{{{\bf a}^{\prime} \, \rm{mod} \,  {\bf l f} }}
\zeta_{{\bf l f}} ({\bf a}^{\prime}, - s) ({\bf a}^{\prime},\, F)^{-1} =
\nonumber\end{equation}
\begin{equation}
(N {\bf b}^{s+1} - ({\bf b}, \, F))
\sum_{{{\bf a}  \, \rm{mod} \,  {\bf f} }}
\sum_{{{\bf a}^{\prime} \, \rm{mod} \,  {\bf l f} } \atop  {{\bf a}^{\prime}
\equiv {\bf a}  \, \rm{mod} \,  {\bf f}}}
\zeta_{{\bf l f}} ({\bf a}^{\prime}, - s) ({\bf a},\, F)^{-1} =
\nonumber\end{equation}
\begin{equation}
(N {\bf b}^{s+1} - ({\bf b}, \, F))
\sum_{{{\bf a} \, \rm{mod} \,  {\bf f}}}(\zeta_{{\bf f}} ({\bf a}, - s) -
N{\bf l}^{s} \zeta_{{\bf f}} ({\bf l}^{-1} {\bf a}, - s))
({\bf a},\, F)^{-1} =
\nonumber\end{equation}
\begin{equation}
(N {\bf b}^{s+1} - ({\bf b}, \, F))
(\sum_{{{\bf a} \, \rm{mod} \,  {\bf f}}}\zeta_{{\bf f}} ({\bf a}, - s)
({\bf a},\, F)^{-1}
- ({\bf l}, \, F)^{-1} N{\bf l}^{s} \,
\zeta_{{\bf f}} ({\bf l}^{-1} {\bf a}, - s) ({\bf l}^{-1} {\bf a},\, F)^{-1}) =
\nonumber
\end{equation}
\begin{equation}
(1 - ({\bf l}, \, F)^{-1} N{\bf l}^{s})
(N {\bf b}^{s+1} - ({\bf b}, \, F))
\sum_{{{\bf a} \, \rm{mod} \,  {\bf f}}}\zeta_{{\bf f}} ({\bf a}, - s)({\bf a},\, F)^{-1}
\nonumber\end{equation}

\begin{lemma}\label{Lemma 2.1}
Let ${\bf f}\, | \, {\bf f}^{\prime}$ be ideals of ${\mathcal
O}_K$ coprime to ${\bf b}.$ Then we have the following equality.
\begin{equation}
Res_{K_{{\bf f}^{\prime}}/K_{{\bf f}}} \,\, \Theta_{s} ({\bf b}, {\bf f}^{\prime})
= \bigl( \, \prod_{{{\bf l} \, \not \, | \, {\bf f}} \atop
{{\bf l} \, | \, {\bf f}^{\prime}}} \, (1 - ({\bf l}, \, F)^{-1} N{\bf l}^{s}) \, \bigr) \,\,
\Theta_{s} ({\bf b}, {\bf f})
\label{2.53}\end{equation}
\end{lemma}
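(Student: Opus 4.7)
The plan is to decompose the conductor transition ${\bf f} \to {\bf f}^{\prime}$ into a chain and chain the two relations already established: (\ref{2.2}) handles the portion that merely raises exponents of primes already dividing ${\bf f}$ (producing no Euler factor), and (\ref{2.52}) is applied successively to introduce the new primes one at a time, each contributing the desired Euler factor.

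Concretely, I would let ${\bf l}_1, \ldots, {\bf l}_r$ denote the distinct prime ideals of ${\mathcal O}_K$ dividing ${\bf f}^{\prime}$ but not ${\bf f}$, and set ${\bf f}_0 := {\bf f}$ and ${\bf f}_i := {\bf f} \, {\bf l}_1 \cdots {\bf l}_i$ for $1 \leq i \leq r$. Then ${\bf f}_r$ divides ${\bf f}^{\prime}$ and these two ideals share exactly the same set of prime divisors, so (\ref{2.2}) gives
\[
Res_{K_{{\bf f}^{\prime}}/K_{{\bf f}_r}} \, \Theta_s({\bf b}, {\bf f}^{\prime}) = \Theta_s({\bf b}, {\bf f}_r).
\]
At the $i$-th step, ${\bf l}_i$ is coprime to ${\bf f}_{i-1}$ (by distinctness of the ${\bf l}_j$ and coprimality of ${\bf l}_i$ with ${\bf f}$), and ${\bf b}$ is coprime to ${\bf l}_i {\bf f}_{i-1}$ (since ${\bf b}$ is coprime to ${\bf f}^{\prime}$, which is divisible by ${\bf l}_i {\bf f}_{i-1}$); hence (\ref{2.52}) applies and yields
\[
Res_{K_{{\bf f}_i}/K_{{\bf f}_{i-1}}} \, \Theta_s({\bf b}, {\bf f}_i) = (1 - ({\bf l}_i, F)^{-1} N{\bf l}_i^s) \, \Theta_s({\bf b}, {\bf f}_{i-1}).
\]

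I would then iterate these $r$ equalities, composing the restriction maps via $Res_{K_{{\bf f}_r}/K_{\bf f}} = Res_{K_{{\bf f}_1}/K_{\bf f}} \circ \cdots \circ Res_{K_{{\bf f}_r}/K_{{\bf f}_{r-1}}}$, and combine the outcome with the previous displayed equation to obtain the claimed formula. The only point that deserves attention is that the Euler factor $(1 - ({\bf l}_i, F)^{-1} N{\bf l}_i^s)$ produced at an intermediate level must be transported down through the subsequent restrictions unchanged; this is immediate from the standard compatibility of the Artin symbol with restriction in towers, since $({\bf l}_i, F)$ is already expressed in the fixed target group ring $\Q[G(F/K)]$. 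Apart from that verification, the argument is straightforward bookkeeping.
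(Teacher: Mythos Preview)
Your proposal is correct and is precisely the argument the paper has in mind: the paper's proof is a single line stating that the lemma follows from (\ref{2.2}) and (\ref{2.52}), and you have simply spelled out the induction on the new primes that this entails. Your extra remark about the Euler factors passing unchanged through the successive restrictions is the one point worth making explicit, and you have handled it correctly.
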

\begin{proof}
The lemma follows from (\ref{2.2}) and (\ref{2.52}).
\end{proof}

In what follows, for any given abelian extension $F/K$, we
consider the field extensions $F (\mu_{l^k}) / K$, for all $k \geq
0$ and a fixed prime $l$. We let ${\bf f}_k$ be the conductor of
the abelian extension $F (\mu_{l^k}) / K.$ We suppress from the
notation the explicit dependence of ${\bf f}_k$ on $l$ since the
prime $l$ is chosen once and for all in this paper.

\noindent
\section{Basic facts about algebraic $K$-theory}

\subsection{The Bockstein sequence and the Bott element}

\noindent For a ring $R$ we consider the Quillen $K$-groups
$$K_m
(R) := \pi_{m} (\Omega B Q P (R)) := [S^m, \, \Omega B Q P (R)]$$
(see \cite{Q1}) and the $K$-groups with coefficients
$$K_m (R, \,
\Z/l^k) := \pi_{m} (\Omega B Q P (R), \, \Z/l^k) := [M^{m}_{l^k},
\, \Omega B Q P (R)]$$ defined by Browder and Karoubi \cite{Br}.
Quillen's $K$--groups  can also be computed using Quillen's plus
construction as $K_n (R) := \pi_{n} (BGL (R)^{+}).$  Any unital
homomorphism of rings $\phi\, :\, R \rightarrow R^{\prime}$
induces natural homomorphisms on $K$-groups
$$\phi_{R \backslash R^{\prime}}\, :\, K_{m} (R, \, \diamondsuit)
\stackrel{}{\longrightarrow}
K_m (R^{\prime}, \, \diamondsuit)$$
where $K_{m} (R, \, \diamondsuit)$ denotes either $K_{m} (R)$ or
$K_{m} (R, \, \Z/l^k).$
\bigskip

\noindent Quillen $K$-theory and $K$-theory with coefficients admit
product structures:
$$K_n (R, \, \diamondsuit) \times K_m (R, \, \diamondsuit)
\stackrel{\ast}{\longrightarrow} K_{m + n} (R, \, \diamondsuit)$$
These induce graded ring structures on the groups $\bigoplus_{n
\geq 0} K_n (R, \, \diamondsuit).$
\medskip

\noindent For a topological space $X$, there is a Bockstein exact
sequence
$$ \stackrel{}{\longrightarrow} \pi_{m+1} (X, \, \Z/l^k)
\stackrel{b}{\longrightarrow} \pi_m (X)
\stackrel{l^k}{\longrightarrow} \pi_m (X)
\stackrel{}{\longrightarrow} \pi_m (X, \, \Z/l^k)
\stackrel{}{\longrightarrow} $$ In particular, if we take $X :=
\Omega B Q P (R))$, we get the Bokstein exact sequence in
$K$-theory:

$$ \stackrel{}{\longrightarrow} K_{m+1} (R, \, \Z/l^k)
\stackrel{b}{\longrightarrow} K_m (R) \stackrel{l^k}{\longrightarrow}
K_m (R) \stackrel{}{\longrightarrow} K_m (R, \, \Z/l^k)
\stackrel{}{\longrightarrow} $$
\medskip

\noindent
For any group $G$ we have $BG = K (G, 1).$ Hence
$$
\pi_{n} (BG) \,\, = \,\,
\left\{
\begin{array}{lll}
G&\rm{if}&n = 1\\
0&\rm{if}&n > 1\\
\end{array}\right.
$$
Consequently, for a commutative group $G$ and $X := BG$ the
Bockstein map $b$ gives an isomorphism $b\, : \, \pi_{2} (BG, \,
\Z/l^k) \stackrel{\cong}{\longrightarrow} G [l^k].$
\medskip

\noindent
For a commutative ring with identity $R$ we have
$GL_1 (R) = R^{\times}.$ Assume that $\mu_{l^k} \subset R^{\times}$. Then
$R^{\times} [l^k] = \mu_{l^k}.$
Let $\beta$ denote the natural composition of maps:
$$\mu_{l^k}
\stackrel{b^{-1}}{\longrightarrow} \pi_{2} (BGL_1 (R); \Z/l^k)
\stackrel{}{\longrightarrow} \pi_{2} (BGL (R); \Z/l^k)
\stackrel{}{\longrightarrow} \pi_{2} (BGL (R)^{+}; \Z/l^k) =
K_2 (R, \, \Z/l^k)$$
We define the Bott element $\beta_{k} :=
\beta (\xi_{l^k}) \in K_2 (R; \, \Z/l^k)$
as the image of $\xi_{l^k}$ via $\beta$, where $\xi_{l^k}$ is a fixed generator
of $\mu_{l^k}$.  We let
$$\beta_{k}^{\ast \, n} :=
\beta_{k} \ast \dots \ast \beta_{k} \in K_{2n} (R; \, \Z/l^k).$$
The Bott element $\beta_{k}$ depends of course on the ring
$R$. However, we suppress this dependence from the notation since it will be always
clear where a given Bott element lives. For example, if
$\phi\, :\, R \rightarrow R^{\prime}$ is a
homomorphism of commutative rings containing $\mu_{l^k}$,  then it is clear from
the definitions that the map
$$\phi_{R \backslash R^{\prime}}\, :\, K_{2} (R; \, \Z/l^k)
\stackrel{}{\longrightarrow}
K_2 (R^{\prime}, \, \Z/l^k)$$ transports the Bott element for
$R$ into the Bott element for $R^{\prime}$. By a slight abuse of notation, this will be written
as  $\phi_{R \backslash R^{\prime}} (\beta_{k}) = \beta_{k}.$
\medskip

Dwyer and Fiedlander [DF] constructed \' etale K-theory and proved that for any
commutative, noetherian  $\Z[1/l]$-algebra $R$ there are natural
ring homomorphisms for all $l > 2:$
\begin{equation}
K_{\ast} (R) \,\,
{\stackrel{}{\longrightarrow}} \,\,
K_{\ast}^{et} (R)
\label{infinitecoefDFmap}
\end{equation}
\begin{equation}
K_{\ast} (R; \Z/l^k ) \,\,
{\stackrel{}{\longrightarrow}} \,\,
K_{\ast}^{et} (R; \Z/l^k)
\label{finitecoefDFmap}
\end{equation}
If $R$ has finite $\Z/l$-cohomological dimension  then there are Atiyach-Hirzebruch
type spectral sequences [DF] Propositions 5.1, 5.2:
\begin{equation}
E_{2}^{p, -q} = H^p(R ; \Z_l (q/2)) \Rightarrow K_{q - p}^{et} (R).
\label{infinitecoefDFspecseq}
\end{equation}
\begin{equation}
E_{2}^{p, -q} = H^p(R ; \Z/l^k (q/2)) \Rightarrow K_{q - p}^{et} (R; \Z/l^k).
\label{finitecoefDFspecseq}
\end{equation}

\subsection{$K$-theory of finite fields}
Let $\F_q$ be the finite field with $q$ elements. Quillen [Q3]
has shown that:

$$
K_n (\F_q) \,\, = \,\,
\left\{
\begin{array}{lll}
\Z &\rm{if}&n = 0\\
0&\rm{if}&n = 2m \quad \text{and} \quad m > 0\\
\Z/(q^m - 1)\Z & \rm{if} & n = 2m -1 \quad \text{and} \quad m > 0\\
\end{array}\right.
$$

\medskip

\noindent
It was proven by Quillen [Q3] pp. 583-585 that for the finite field
extension $i \, : \, \F_q \rightarrow \F_{q^f}$ and all $n \geq 1$
the natural map:
$$i \, : \, K_{2n-1} (\F_q) \rightarrow K_{2n-1} (\F_{q^f})$$
is injective and the transfer map:
$$N \, : \, K_{2n-1} (\F_{q^f}) \rightarrow K_{2n-1} (\F_{q})$$
is surjective, where we simply write $i$ instead of
$i_{\F_q \backslash \F_{q^f}}$ and $N$ instead of $Tr_{\F_{q^f}/\F_q}.$
Moreover, Quillen [Q3] pp. 583-585
proved that
$$K_{2n-1} (\F_q) \cong K_{2n-1} (\F_{q^f})^{G(\F_{q^f} / \F_{q})}$$
and that the Frobenius automorphism $Fr_{q}$
(the canonical generator of $G(\F_{q^f} / \F_{q})$) acts on $K_{2n-1} (\F_{q^f})$
via multiplication by $q^n.$  Observe that
$$i \circ N = \sum_{i = 0}^{f-1} Fr_{q}^{i}.$$
Hence
$$\text{Ker}\, N =  \, K_{2n-1} (\F_{q^f})^{Fr_{q} - Id}\,  =
 \, K_{2n-1} (\F_{q^f})^{q^n - 1}$$
because $\text{Ker} \, N$ is the kernel of multiplication
by $\sum_{i = 0}^{f-1} \, q^{ni} = \frac {{q^{nf} - 1}}{{q^n - 1}}$
in the cyclic group $K_{2n-1} (\F_{q^f}).$
In particular, this shows that the norm map $N$ induces the following
isomorphism
$$K_{2n-1} (\F_{q^f})_{G(\F_{q^f} / \F_{q})} \cong K_{2n-1} (\F_q). $$

\medskip

\noindent
By the Bockstein exact sequence and Quillen's results above, we observe that
$$K_{2n} (\F_q,\, \Z/l^k) \stackrel{b}{\longrightarrow} K_{2n -1} (\F_q)[l^k]$$
is an isomorphism. Hence, $K_{2n} (\F_q,\, \Z/l^k)$ is a cyclic group.
Let us assume that $\mu_{l^k} \subset \F_{q}^{\times}$ (i.e. $l^k\mid q-1$.)
In this case, Browder \cite{Br} proved that the element
$\beta_{k}^{\ast \, n}$ is a generator of $K_{2n} (\F_q,\, \Z/l^k)$.
Dwyer and Friedlander proved that there is a natural isomorphism of
graded rings:
$$K_{\ast} (\F_q,\, \Z/l^k) \stackrel{\cong}{\longrightarrow}
K_{\ast}^{et} (\F_q, \, \Z/l^k ).$$
Assume $l^k \, | \, q-1$ and (by abuse of notation) let $\beta_{k}$ also denote the image of the Bott
element via the natural isomorphism:
$$K_{2} (\F_q,\, \Z/l^k) \stackrel{\cong}{\longrightarrow}
K_{2}^{et} (\F_q, \, \Z/l^k ).$$
Then by [DF] Theorem 5.6 multiplication with
$\beta_{k}$ induces isomorphisms:
$$ \times \, \beta_{k}  \, : \, K_{i}^{et} (\F_q,\, \Z/l^k)
\stackrel{\cong}{\longrightarrow} K_{i+2}^{et} (\F_q, \, \Z/l^k ),$$
$$ \times \,  \beta_{k}  \, : \, K_{i} (\F_q,\, \Z/l^k)
\stackrel{\cong}{\longrightarrow} K_{i+2} (\F_q, \, \Z/l^k ),$$
In particular,  if  $l^k \, | \, q-1$ and $\alpha \in
K_{1} (\F_q,\, \Z/l^k) = K_{1} (\F_q) /l^k$ is a generator, then
the element $\alpha \ast \beta_{k}^{\ast \, n-1}$ is a generator of the
cyclic group $K_{2n-1} (\F_q,\, \Z/l^k).$

\subsection{$K$-theory of number fields and rings of integers}
Let $F$ be a number field, let ${\mathcal O}_{F}$ be its
ring of integers and let $k_v$ be the residue field
for a prime $v$ of ${\mathcal O}_{F}.$
For a finite set of primes
$S$ of ${\mathcal O}_{F}$ the ring of $S$-integers is denoted
${\mathcal O}_{F, S}.$
\bigskip

\noindent
Quillen \cite{Q2} proved that $K_{n}({\mathcal O}_{F})$ is a finitely generated
group for every $n \geq 0.$
Borel computed the ranks of the groups $K_{n}({\mathcal O}_{F})$ as follows:
$$
 K_n ({\mathcal O}_{F}) \otimes_{\Z} \Q \,\, = \,\,
\left\{
\begin{array}{lll}
\Q   & \text{if} &  n=0\\
\Q^{r_{1} + r_{2} - 1} & \text{if} & n = 1\\
0 & \text{if} & n = 2m \quad \text{and} \quad n > 0\\
\Q^{r_1 + r_2}   & \text{if} & n \equiv 1 \mod 4 \quad
\text{and} \quad n \not= 1 \\
\Q^{r_2}  & \text{if} & n \equiv 3 \mod 4 \\
\end{array}\right.
$$

\noindent
We have the following localization
exact sequences in Quillen $K$-theory and $K$-theory with coefficients \cite{Q1}.

$$\stackrel{}{\longrightarrow} K_{m} ({\mathcal O}_{F}, \, \diamondsuit)
\stackrel{}{\longrightarrow} K_m (F, \, \diamondsuit)
\stackrel{\partial_F}{\longrightarrow}
\bigoplus_{v} K_{m-1} (k_v, \, \diamondsuit)
\stackrel{}{\longrightarrow} K_{m-1} ({\mathcal O}_{F}, \, \diamondsuit)
\stackrel{}{\longrightarrow}$$
\medskip

\noindent
Let $E/F$ be a finite extension. The natural maps in $K$-theory
induced by the embedding $i\, :\, F \rightarrow E$ and $\sigma\, :\,
E \rightarrow E,$ for $\sigma \in G(E/F)$, will be denoted
for simplicity by $i \, :\, K_{m} (F, \, \diamondsuit) \stackrel{}{\longrightarrow}
K_m (E, \, \diamondsuit)$ and $\sigma \, : \,
K_{m} (E, \, \diamondsuit) \stackrel{}{\longrightarrow}
K_m (E, \, \diamondsuit).$ Observe that  $i := i_{F \backslash  E}$ and
$\sigma := \sigma_{E \backslash E}$, according to the notation in
section 3.1.
\medskip

\noindent
In addition to the natural maps
$i,$ $\sigma,$ $\partial_F,$ $\partial_E,$ and product structures $\ast$ for
$K$-theory of $F$ and $E$ introduced above, we have (see [Q]) the transfer map

$$Tr_{E/F}\, :\, K_{m} (E, \, \diamondsuit) \stackrel{}{\longrightarrow}
K_m (F, \, \diamondsuit)$$
and the reduction map
$$r_v\, :\, K_{m} ({\mathcal O}_{F, S}, \, \diamondsuit)
\stackrel{}{\longrightarrow} K_m (k_v, \, \diamondsuit)$$
for any prime $v \notin S.$
\medskip

\noindent
The maps discussed above enjoy many compatibility properties.
For example,  $\sigma$ is naturally compatible with $i,$
$\partial_F,$ $\partial_E,$ product structure $\ast,$
$Tr_{E/F}$ and $r_w$ and $r_v.$ See e.g. \cite{Ba1} for explanations
of some of these compatibility properties.
Let us mention below two nontrivial compatibility properties.
By the result of Gillet \cite{Gi}, we have the
following commutative diagrams in Quillen $K$-theory and $K$-theory with
coefficients:

\begin{displaymath}\xymatrix{
K_{m} (F, \, \diamondsuit) \times  K_{n} ({\mathcal O}_{F}, \, \diamondsuit)
\ar@<0.1ex>[d]^{\partial \times id}  \ar[r]^{\ast} &  K_{m+n} (F, \,
\diamondsuit)
\ar@<0.1ex>[d]^{\partial}\\
\bigoplus_{v} \, K_{m-1} (k_v, \, \diamondsuit) \times
K_{n} ({\mathcal O}_{F}, \, \diamondsuit) \quad\quad
\ar[r]^{\qquad\ast} & \quad\quad \bigoplus_{v} \, K_{m + n -1} (k_v, \,
\diamondsuit)
}\label{diagram 2.4}
\end{displaymath}
\medskip

\noindent
Let $E/F$ be a finite extension unramified over a prime
$v$ of ${\mathcal O}_{F}.$  Let $w$ be a prime of
${\mathcal O}_{E}$ over $v.$ From now on, we will write
$N_{w/v} \, := \, Tr_{k_w / k_v}.$
The following diagram shows the compatibility of transfer
with the boundary map in localization sequences for Quillen $K$-theory
and $K$-theory with coefficients.

\begin{displaymath}
\xymatrix{
K_{m} (E, \, \diamondsuit) \quad \ar@<0.1ex>[d]^{Tr_{E/F}}
\ar[r]^{\bigoplus_{v} \bigoplus_{w\, | \,v} \partial_{w}} &
\quad\quad\quad\quad\quad
\bigoplus_{v} \bigoplus_{w\, | \,v} K_{m-1} (k_w, \, \diamondsuit)
\ar@<0.1ex>[d]^{\bigoplus_{v} \bigoplus_{w\, | \,v} N_{w/v}}\\
K_{m} (F, \, \diamondsuit) \quad
\ar[r]^{\bigoplus_{v}  \partial_{v}} &
\quad \bigoplus_{v} \, K_{m -1} (k_v, \, \diamondsuit)}
\label{diagram 2.41}
\end{displaymath}
\medskip

\noindent
Observe that $\partial_{E} = \bigoplus_{v} \bigoplus_{w\, | \,v} \partial_{w}$
and $\partial_{F} = \bigoplus_{v} \partial_{v}.$

\medskip

\noindent
\section{Construction of the map $\Lambda$}

Throughout the rest of the paper we assume that $\Theta_{1} ({\bf b}, {\bf f}_k)$ annihilates
$K_{2} ({\mathcal O}_{F_{l^k}})$ for all $k \geq 0.$
For a prime $v$ of ${\mathcal O}_{F}$, let
$k_v$ be its residue field and $q_v$ the cardinality of $k_v$ . Similarly, for any prime
$w$ of ${\mathcal O}_{F_{l^k}}$, we let $k_w$ be its residue field.
We put $E := F_{l^k}.$
If $v \not\,\mid l,$ we observe that $k_w = k_v (\xi_{l^k})$, since the corresponding
local field extension $E_{w}/F_{v}$ is unramified.
For any finite set $S$ of primes in $O_F$ and any $k\geq 0$, there is an exact sequence [Q].

$$
0 \stackrel{}{\longrightarrow} K_{2} ({\mathcal O}_{F_{l^k}})
\stackrel{}{\longrightarrow} K_{2} ({\mathcal O}_{F_{l^k, \, S}})
\stackrel{\partial}{\longrightarrow} \bigoplus_{v \in S} \bigoplus_{w | v}
K_{1} (k_w) \stackrel{}{\longrightarrow} 0
$$

Let $\xi_{w, k} \in K_{1} (k_w)_l$ be a generator of the $l$-torsion part of $K_1(k_w)$.
Pick an element $x_{w, k} \in K_{2} ({\mathcal O}_{F_{l^k}, S})_l$ such that
${\partial} (x_{w, k}) = \xi_{w, k}.$ Observe that
$x_{w, k}^{\Theta_{1} ({\bf b}, {\bf f}_k)}$ does not depend on the choice of $x_{w, k}$
since we assumed that $\Theta_{1} ({\bf b}, {\bf f}_k)$ annihilates
$K_{2} ({\mathcal O}_{F_{l^k}}).$ Observe that if $\text{ord}(\xi_{w, k}) = l^a$, then
$x_{w, k}^{l^a} \in K_{2} ({\mathcal O}_{F_{l^k}}).$ Hence,
$(x_{w, k}^{\Theta_{1} ({\bf b}, {\bf f}_k)})^{l^a} =
(x_{w, k}^{l^a})^{\Theta_{1} ({\bf b}, {\bf f}_k)} = 0.$
Consequently,  there is a well defined map:

$$\Lambda_{1}\, : \, \bigoplus_{v \in S} \bigoplus_{w | v} K_{1} (k_w)_l
\stackrel{}{\longrightarrow} K_{2} ({\mathcal O}_{F_{l^k, \, S}})_l ,$$
$$\Lambda_{1} (\xi_{w, k}) \, := \, x_{w, k}^{\Theta_{1} ({\bf b}, {\bf f}_k)}.$$

\begin{lemma}\label{Lemma 4.1}
The map $\Lambda_1$ satisfies
the following property
$$\partial \Lambda_{1} (\xi_{w, k}) \, :=
\, \xi_{w, k}^{\Theta_{1} ({\bf b}, {\bf f}_k)}.$$
\end{lemma}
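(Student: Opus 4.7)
The plan is to reduce the identity to the Galois-equivariance of the boundary map $\partial$ in Quillen's localization sequence, combined with the fact that $\Theta_{1}({\bf b}, {\bf f}_k) \in \Z[G(F_{l^k}/K)]$ acts naturally on both the source and the target of $\partial$. Once these two structural ingredients are in place, the verification is a direct computation from the definition of $\Lambda_1$.

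More precisely, the first step is to record that the group $G(F_{l^k}/K)$ acts on $K_2(\mathcal{O}_{F_{l^k}, S})_l$ through its action on the ring $\mathcal{O}_{F_{l^k}, S}$ (using the functoriality $\sigma_*$ from \S3.3), and on $\bigoplus_{v \in S}\bigoplus_{w | v} K_1(k_w)_l$ via permutation of the primes $w$ above each $v \in S$ together with the induced isomorphisms on residue fields $k_w \xrightarrow{\sigma} k_{\sigma w}$. The boundary map $\partial$ is equivariant for these actions; this is part of the standard compatibility package for Quillen localization sequences recalled in \S3.3. Writing $\Theta_{1}({\bf b}, {\bf f}_k) = \sum_{\bf a} \Delta_{2}({\bf a}, {\bf b}, {\bf f}_k)({\bf a}, F_{l^k})^{-1}$ as in (\ref{2.7}), each Galois element acts $\Z$-linearly and hence commutes with $\partial$; thus $\partial$ is $\Z[G(F_{l^k}/K)]$-equivariant.

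The second step is the direct computation: by the definition of $\Lambda_1$ and the choice of $x_{w,k}$ with $\partial(x_{w,k}) = \xi_{w,k}$, one has
\[
\partial\bigl(\Lambda_1(\xi_{w,k})\bigr) \;=\; \partial\bigl(x_{w,k}^{\Theta_{1}({\bf b}, {\bf f}_k)}\bigr) \;=\; \bigl(\partial(x_{w,k})\bigr)^{\Theta_{1}({\bf b}, {\bf f}_k)} \;=\; \xi_{w,k}^{\Theta_{1}({\bf b}, {\bf f}_k)},
\]
which is the desired identity. Note that well-definedness of the expression $\Lambda_1(\xi_{w,k})$ (its independence of the choice of lift $x_{w,k}$) has already been dealt with in the paragraph preceding the lemma, using the hypothesis that $\Theta_{1}({\bf b}, {\bf f}_k)$ annihilates $K_2(\mathcal{O}_{F_{l^k}})$; in the identity above this plays no further role since both sides are genuinely well-defined.

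There is no real obstacle: the statement is essentially a tautology once one notes that $\partial$ commutes with the $\Z[G(F_{l^k}/K)]$-action, and the substantive content of the construction has been absorbed into the well-definedness argument for $\Lambda_1$ given just before the lemma. The only mild care needed is to make sure that the Galois action is correctly described on the decomposition $\bigoplus_{v \in S} \bigoplus_{w \mid v} K_1(k_w)_l$, where it permutes the summands over each fixed $v$ and reduces to the identity on $K_1(k_v)_l$ only after passing to coinvariants.
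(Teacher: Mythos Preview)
Your proposal is correct and takes essentially the same approach as the paper: the paper's proof is a single sentence stating that the lemma follows immediately from the compatibility of $\partial$ with the Galois action, which is exactly what you unpack in more detail. Your write-up is in fact more precise, since you identify the relevant group as $G(F_{l^k}/K)$ (in which $\Theta_{1}({\bf b},{\bf f}_k)$ lives) rather than $G(E/F)$, and you spell out the computation $\partial(x_{w,k}^{\Theta_1}) = (\partial x_{w,k})^{\Theta_1} = \xi_{w,k}^{\Theta_1}$ explicitly.
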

\begin{proof}
The lemma follows immediately by compatibility of $\partial$ with
$G (E/F)$ action.
\end{proof}

Let $v$ be a prime in ${\mathcal O}_{F}$ sitting above $p \not= l$
in $\Z$. Let $S := S_v$ be the finite set primes of ${\mathcal
O}_{F}$ consisting of all the primes over $p.$ Let us fix an
$n\in\N$. Let $k (v)$ be the natural number for which $l^{k(v)}\,
|| \, q_{v}^{n} - 1.$ Observe that if $l \, | \, q_v -1$ then
$k(v) = v_{l}(q_v -1) + v_{l}(n)$ (see e.g. [Ba1, p. 336].) For $k
\geq k (v)$ and $E := F(\mu_{l^k})$ let us define elements

$$\Lambda_n (\xi_{v, k}; l^k) :=
Tr_{E/F} ( x_{w, k}^{\Theta_{1} ({\bf b}, {\bf f}_k) } \ast
\beta_{k}^{\ast \, n-1})^{N {\bf b}^{n-1}} \in K_{2n} ({\mathcal
O}_{F, S};\, \Z/l^k)).$$ From now on,  we will suppress the index
$n$ from the notation and we write $\Lambda (\xi_{v, k}; l^k)$
instead of $\Lambda_n (\xi_{v, k}; l^k).$

Let us fix a prime sitting above $v$ in each of the fields $F(\mu_{l^k})$,
such that if $k\leq k'$ and $w$ and $w'$ are the fixed primes in $E:=F(\mu_{l^k})$ and
$E':=F(\mu_{l^{k'}})$, respectively, then $w'$ sits above $w$. By the surjectivity of
the transfer maps for $K$-theory of finite fields (see the end of
section 3) we can associate to each $k$ and the chosen prime $w$ in
$E:=F(\mu_{l^k})$ a generator $\xi_{w, k}$ of $K_1(k_w)_l$, such that
$$N_{w^{\prime}/w}(\xi_{w^{\prime},
k^{\prime}})= \xi_{w, k},$$
for all $k\leq k'$, where $w$ and $w'$ are the fixed primes in $E:=F(\mu_{l^k})$ and $E':=F(\mu_{l^{k'}})$,
respectively.

Let $r_{k^{\prime}/k} \, :\,  K_{\ast} (\, .\, ;\, \Z/l^{k^{\prime}})
\rightarrow K_{\ast} (\, .\, ;\, \Z/l^k)$ be the coefficient reduction map.
Recall that we put $N_{w / v} := Tr_{k_{w}/k_v}$ and
$N_{w^{\prime} / v} := Tr_{k_{w^{\prime}}/k_v}.$

\begin{lemma}\label{Lemma 4.2} With notations as above, for every
$k \leq k^{\prime}$ we have
$$r_{k^{\prime}/k} ( N_{w^{\prime}/v}
(\xi_{w^{\prime}, k^{\prime}} \ast \beta_{k^{\prime}}^{\ast \, n-1})) =
N_{w/v} (\xi_{w, k} \ast \beta_{k}^{\ast \, n-1})$$
\end{lemma}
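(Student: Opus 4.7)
The plan is to push the coefficient reduction $r_{k'/k}$ inward through the transfer and the $\ast$-product, identify where it sends the building blocks $\xi_{w',k'}$ and $\beta_{k'}^{\ast\,n-1}$, and then use transitivity of transfer together with the projection formula to collapse the transfer from $w'/v$ down to a transfer from $w/v$.

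First, I would use that coefficient reduction is natural with respect to transfers (the Bockstein triangle is functorial in $Tr$), so $r_{k'/k}\circ N_{w'/v}=N_{w'/v}\circ r_{k'/k}$. Since $r_{k'/k}$ is also a graded ring map on $K_\ast(-;\Z/l^{\bullet})$, it distributes over $\ast$-products. Under the compatible choice $\xi_{l^k}:=\xi_{l^{k'}}^{l^{k'-k}}$ of primitive roots of unity already implicit in the definition of the Bott elements, naturality of the Bockstein isomorphism $b:\pi_2(B\mu_{l^k};\Z/l^k)\xrightarrow{\sim}\mu_{l^k}$ with respect to the change of coefficients $\Z/l^{k'}\to\Z/l^k$ forces $r_{k'/k}(\beta_{k'})=\beta_k$, hence $r_{k'/k}(\beta_{k'}^{\ast\,n-1})=\beta_k^{\ast\,n-1}$. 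Moreover $\xi_{w',k'}$ is the image of an integral $K_1$-class under $K_1(k_{w'})\to K_1(k_{w'};\Z/l^{k'})$, so reducing its coefficients leaves it unchanged as a class in $K_1(k_{w'};\Z/l^k)$. Combining these facts,
$$r_{k'/k}\bigl(N_{w'/v}(\xi_{w',k'}\ast\beta_{k'}^{\ast\,n-1})\bigr)=N_{w'/v}\bigl(\xi_{w',k'}\ast\beta_{k}^{\ast\,n-1}\bigr).$$

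Next, by transitivity of transfer $N_{w'/v}=N_{w/v}\circ N_{w'/w}$, and since the Bott element $\beta_k\in K_2(k_w;\Z/l^k)$ pulls back along $k_w\hookrightarrow k_{w'}$ to the Bott element $\beta_k\in K_2(k_{w'};\Z/l^k)$ (compatibility of $\beta_k$ with ring homomorphisms, as recorded at the end of \S3.1), Gillet's projection formula gives
$$N_{w'/w}(\xi_{w',k'}\ast\beta_k^{\ast\,n-1})=N_{w'/w}(\xi_{w',k'})\ast\beta_k^{\ast\,n-1}.$$
The system $\{\xi_{w,k}\}_k$ was chosen precisely so that $N_{w'/w}(\xi_{w',k'})=\xi_{w,k}$ in $K_1(k_w)$, hence also in $K_1(k_w;\Z/l^k)$. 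Applying $N_{w/v}$ to both sides yields exactly the right-hand side of the lemma.

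The only delicate point is the identification $r_{k'/k}(\beta_{k'})=\beta_k$, which depends on choosing the primitive roots $\xi_{l^k}$ coherently via $\xi_{l^k}=\xi_{l^{k'}}^{l^{k'-k}}$; once this compatibility is in place, it reduces to the naturality square of the Bockstein. Everything else (naturality of the transfer under coefficient reduction, transitivity of transfer, Gillet's projection formula, and the compatible choice of the generators $\xi_{w,k}$) is either a standard fact from \S3 of the paper or was explicitly arranged in the set-up preceding the lemma.
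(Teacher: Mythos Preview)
Your argument is correct and follows essentially the same route as the paper's own proof: commute $r_{k'/k}$ past the transfer, reduce $\beta_{k'}$ to $\beta_k$, factor $N_{w'/v}=N_{w/v}\circ N_{w'/w}$, apply the projection formula to pull $\beta_k^{\ast\,n-1}$ out of $N_{w'/w}$, and invoke the norm-compatibility $N_{w'/w}(\xi_{w',k'})=\xi_{w,k}$. One minor point of attribution: the projection formula used here is the standard one for transfer maps, not the result of Gillet cited in \S3 (which concerns compatibility of $\partial$ with products).
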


\begin{proof} The formula follows by the compatibility of the elements
$(\xi_{w, k})_{w}$ with respect to the norm maps, by
the compatibility of Bott elements with respect to the coefficient reduction map
$r_{k^{\prime}/k} (\beta_{k^{\prime}}) = \beta_{k},$
and by the projection formula. More precisely,
we have the following equalities:

$$r_{k^{\prime}/k} ( N_{w^{\prime}/v}(
\xi_{w^{\prime}, k^{\prime}} \, \ast \, \beta_{k^{\prime}}^{\ast \, n-1}))  =
N_{w^{\prime}/v} (r_{k^{\prime}/k} (
\xi_{w^{\prime}, k^{\prime}} \, \ast \, \beta_{k^{\prime}}^{\ast \, n-1})) =
N_{w^{\prime}/v} (
\xi_{w^{\prime}, k^{\prime}} \, \ast \, \beta_{k}^{\ast \, n-1})) =$$
$$= N_{w/v} (N_{w^{\prime}/w} (\xi_{w^{\prime}, k^{\prime}}) \ast
\beta_{k}^{\ast \, n-1})) = N_{w/v} (\xi_{w, k} \ast \beta_{k}^{\ast \, n-1}).
$$
\end{proof}

\begin{lemma}\label{Lemma 4.3}
For all $k(v) \leq k \leq k^{\prime}$, we have
$$r_{k^{\prime}/k} (\Lambda (\xi_{v, k^{\prime}};
l^{k^{\prime}})) =
\Lambda (\xi_{v, k}; l^k)$$
\end{lemma}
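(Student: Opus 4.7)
The plan is to start from $r_{k'/k}(\Lambda(\xi_{v,k'}; l^{k'}))$, push the reduction past the transfer and the $\ast$-product (using standard compatibilities recorded in Section 3 together with $r_{k'/k}(\beta_{k'}) = \beta_k$), then factor the outer transfer as $Tr_{E'/F} = Tr_{E/F} \circ Tr_{E'/E}$ and apply the projection formula to pull $\beta_k^{\ast(n-1)}$ outside the inner transfer (which is legal because $i_{E\backslash E'}(\beta_k) = \beta_k$). After these manipulations, the claim reduces to verifying the identity
$$
Tr_{E'/E}\bigl(r_{k'/k}(x_{w',k'}^{\Theta_1({\bf b}, {\bf f}_{k'})})\bigr) \; = \; r_{k'/k}\bigl(x_{w,k}^{\Theta_1({\bf b}, {\bf f}_k)}\bigr)
$$
inside $K_2(\mathcal{O}_{E,S}; \Z/l^k)$.

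To prove this, I would first exploit that $E/K$ is abelian---so $G(E'/E)$ is normal in $G(E'/K)$---to obtain the equivariance $Tr_{E'/E} \circ \sigma = \bar\sigma \circ Tr_{E'/E}$ for every $\sigma \in G(E'/K)$, where $\bar\sigma := \sigma|_E$. Extending linearly, and writing $\pi \colon \Z[G(E'/K)] \to \Z[G(E/K)]$ for the resulting restriction of group rings, this gives
$$
Tr_{E'/E}\bigl(x_{w',k'}^{\Theta_1({\bf b}, {\bf f}_{k'})}\bigr) \; = \; \tilde x^{\,\pi(\Theta_1({\bf b}, {\bf f}_{k'}))}, \qquad \tilde x := Tr_{E'/E}(x_{w',k'}).
$$
For $k \geq 1$, both conductors ${\bf f}_k$ and ${\bf f}_{k'}$ are divisible by the same set of primes of $\mathcal{O}_K$ (the primes in the conductor ${\bf f}$ of $F/K$ together with the primes above $l$), so formula (\ref{2.2}), applied at the level of the ray class field group rings and descended via the natural surjections onto $G(F_{l^k}/K)$ and $G(F_{l^{k'}}/K)$, yields $\pi(\Theta_1({\bf b}, {\bf f}_{k'})) = \Theta_1({\bf b}, {\bf f}_k)$.

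It remains to identify $\tilde x^{\,\Theta_1({\bf b}, {\bf f}_k)}$ with $x_{w,k}^{\Theta_1({\bf b}, {\bf f}_k)}$. By the compatibility of $\partial$ with transfer and the norm compatibility $N_{w'/w}(\xi_{w',k'}) = \xi_{w,k}$ set up just before Lemma \ref{Lemma 4.2}, one gets $\partial(\tilde x) = \xi_{w,k}$, so $\tilde x$ is a legitimate lift of $\xi_{w,k}$. Since we assume throughout Section 4 that $\Theta_1({\bf b}, {\bf f}_k)$ annihilates $K_2(\mathcal{O}_E)$, the element $\tilde x^{\,\Theta_1({\bf b}, {\bf f}_k)}$ does not depend on the choice of lift, and so coincides with $x_{w,k}^{\Theta_1({\bf b}, {\bf f}_k)}$. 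Reducing modulo $l^k$ and threading the identifications back through the calculation yields the lemma.

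The main obstacle I expect is bookkeeping: verifying that one is allowed to exchange $r_{k'/k}$, $Tr_{E'/E}$, the action of elements of $\Z[G(E'/K)]$, and the $\ast$-product with $\beta_k^{\ast(n-1)}$ in the order needed. Each exchange is a well-known compatibility (reduction commutes with transfer and with products; transfer is Galois-equivariant in the sense above; the projection formula holds against classes pulled back via $i_{E\backslash E'}$), but the argument only falls into place once the restriction of Stickelberger elements is recast via (\ref{2.2}) and the role of $\tilde x$ as a lift of $\xi_{w,k}$ is made explicit.
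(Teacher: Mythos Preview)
Your proposal is correct and follows essentially the same route as the paper's proof: both factor $Tr_{E'/F} = Tr_{E/F}\circ Tr_{E'/E}$, reduce the Bott element via $r_{k'/k}(\beta_{k'})=\beta_k$, apply the projection formula, and then use that $\tilde x := Tr_{E'/E}(x_{w',k'})$ is a lift of $\xi_{w,k}$ (from the commutative diagram of transfer versus boundary) so that $\tilde x^{\,\Theta_1({\bf b},{\bf f}_k)} = x_{w,k}^{\,\Theta_1({\bf b},{\bf f}_k)}$ by the independence-of-lift property. The only difference is that you make explicit the step $Tr_{E'/E}\bigl(x_{w',k'}^{\Theta_1({\bf b},{\bf f}_{k'})}\bigr) = \tilde x^{\,\Theta_1({\bf b},{\bf f}_k)}$ via Galois equivariance of the transfer and the restriction formula (\ref{2.2}), which the paper leaves tacit; a minor notational point is that your displayed ``$r_{k'/k}\bigl(x_{w,k}^{\Theta_1({\bf b},{\bf f}_k)}\bigr)$'' should just be the image of $x_{w,k}^{\Theta_1({\bf b},{\bf f}_k)}$ in $K_2(\mathcal{O}_{E,S};\Z/l^k)$, since $r_{k'/k}$ is a map between coefficient groups $\Z/l^{k'}\to\Z/l^k$.
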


\begin{proof}
Consider the following commutative diagram:

$$\xymatrix{
K_{2} ({\mathcal O}_{E^{\prime}, S}) \quad \ar@<0.1ex>[d]^{Tr_{E^{\prime}/E}}
\ar[r]^{\bigoplus_{w^{\prime} \in S} \partial_{w^{\prime}}} \quad &
\quad
\bigoplus_{w^{\prime} \in S}  K_{1} (k_{w^{\prime}})
\ar@<0.1ex>[d]^{\bigoplus_{w \in S} \bigoplus_{w^{\prime} | w} N_{{w^{\prime} / w}}}\\
K_{2} ({\mathcal O}_{E, S}) \quad
\ar[r]^{\bigoplus_{w \in S} \partial_{w}} &
\quad \bigoplus_{w \in S} \, K_{1} (k_w)}
\label{diagram 2.5}$$
It follows that we have
$Tr_{E^{\prime}/E} ({x}_{w^{\prime}, k^{\prime}})^{\Theta_{1} ({\bf b}, {\bf f}_k)} =
{x}_{w, k}^{\Theta_{1} ({\bf b}, {\bf f}_k)}.$
Hence, if we use the projection formula again, we get
$$r_{k^{\prime}/k} (Tr_{E^{\prime}/F}
(x_{w^{\prime}, k^{\prime}}^{\Theta_{1} ({\bf b}, {\bf f}_{k^{\prime}})}
\ast \, \beta_{k^{\prime}}^{\ast \, n-1})^{N {\bf b}^{n-1}}) =
Tr_{E/F} (Tr_{E^{\prime}/E}
(x_{w^{\prime}, k^{\prime}}^{\Theta_{1} ({\bf b}, {\bf f}_{k^{\prime}})}
\ast \, \beta_{k}^{\ast \, n-1}))^{N {\bf b}^{n-1}} =$$
$$ = Tr_{E/F} (x_{w, k}^{\Theta_{1} ({\bf b}, {\bf f}_{k})}
\ast \, \beta_{k}^{\ast \, n-1})^{N {\bf b}^{n-1}}.$$
\end{proof}

\noindent
Let us introduce the following notation $N := \bigoplus_{v \in S} \bigoplus_{w\, | \,v} N_{w/v}.$

\begin{proposition}\label{Proposition 4.4}
For every $k \geq k(v)$ we have
$$\partial_{F} (\Lambda (\xi_{v, k}; l^k)) =
(N(\xi_{w, k} \ast \beta_{k}^{\ast \, n-1}))^{\Theta_{n} ({\bf b}, {\bf f})}$$
\end{proposition}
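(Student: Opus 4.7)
The plan is to apply $\partial_F$ to the defining formula for $\Lambda(\xi_{v,k};l^k)$ and peel off the structure using three standard compatibilities. First, by the commutativity of $\partial$ with $Tr_{E/F}$ (the transfer diagram in Section~3.3),
$$\partial_F\Lambda(\xi_{v,k};l^k) = N\,\partial_E\bigl(x_{w,k}^{\Theta_{1}({\bf b},{\bf f}_k)}\ast\beta_{k}^{\ast\, n-1}\bigr)^{N{\bf b}^{n-1}}.$$
Since $\beta_{k}^{\ast\, n-1}\in K_{2n-2}(\mathcal{O}_E;\Z/l^k)$ comes from the ring of integers, Gillet's product formula yields $\partial_E(y\ast z)=\partial_E(y)\ast z$ with $z=\beta_k^{\ast\, n-1}$. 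Combined with Lemma~\ref{Lemma 4.1} to identify $\partial_E(x_{w,k}^{\Theta_1({\bf b},{\bf f}_k)}) = \xi_{w,k}^{\Theta_1({\bf b},{\bf f}_k)}$, this reduces the proposition to proving
$$N\bigl(\xi_{w,k}^{\Theta_{1}({\bf b},{\bf f}_k)}\ast\beta_{k}^{\ast\, n-1}\bigr)^{N{\bf b}^{n-1}} = \bigl(N(\xi_{w,k}\ast\beta_{k}^{\ast\, n-1})\bigr)^{\Theta_n({\bf b},{\bf f})}.$$

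The heart of the proof is the conversion of the $\Theta_1$--action at the top of the tower into a $\Theta_n$--action at the base. Expand $\Theta_1({\bf b},{\bf f}_k) = \sum_{\bf a'}\Delta_2({\bf a'},{\bf b},{\bf f}_k)\sigma_{\bf a'}^{-1}$. Galois equivariance of $\ast$ together with the cyclotomic relation $\sigma(\beta_k)=[\sigma]\beta_k$ gives the identity
$$\sigma_{\bf a'}^{-1}(\xi_{w,k})\ast\beta_{k}^{\ast\, n-1} = [\sigma_{\bf a'}]^{n-1}\,\sigma_{\bf a'}^{-1}\bigl(\xi_{w,k}\ast\beta_{k}^{\ast\, n-1}\bigr),$$
and since $[\sigma_{\bf a'}]\equiv N{\bf a'}\pmod{l^k}$, multiplication by $N{\bf b}^{n-1}$ produces the coefficient $\Delta_2({\bf a'},{\bf b},{\bf f}_k)\,N({\bf a'}{\bf b})^{n-1}$ on each term. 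Because $\mu_{l^k}\subset F_{l^k}\subset K_{{\bf f}_k}$, one has $l^k\mid w_n(K_{{\bf f}_k})$ for every $n$, so the Deligne--Ribet congruence~\eqref{2.9} yields $\Delta_2({\bf a'},{\bf b},{\bf f}_k)\,N({\bf a'}{\bf b})^{n-1}\equiv\Delta_{n+1}({\bf a'},{\bf b},{\bf f}_k)\pmod{l^k}$, packaging the whole expression into $(\xi_{w,k}\ast\beta_{k}^{\ast\, n-1})^{\Theta_n({\bf b},{\bf f}_k)}$.

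Finally, applying $N$ and using the Galois equivariance $N\circ\sigma_{\bf a'}^{-1} = \sigma_{\bf a}^{-1}\circ N$ (where $\sigma_{\bf a}:=\sigma_{\bf a'}|_F$), we group terms by their restriction and rewrite the result as the action of $\mathrm{Res}_{K_{{\bf f}_k}/K_{\bf f}}\Theta_n({\bf b},{\bf f}_k)$ on $N(\xi_{w,k}\ast\beta_{k}^{\ast\, n-1})$. By Lemma~\ref{Lemma 2.1}, this restriction equals $\prod_{{\bf l}\mid{\bf f}_k,\,{\bf l}\nmid{\bf f}}(1-({\bf l},F)^{-1}N{\bf l}^n)\cdot\Theta_n({\bf b},{\bf f})$, i.e.\ $\Theta_n({\bf b},{\bf f})$ decorated by Euler factors at primes above $l$.

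The main obstacle is verifying that this Euler product acts as the identity on the specific element $(N(\xi_{w,k}\ast\beta_{k}^{\ast\, n-1}))^{\Theta_n({\bf b},{\bf f})}\in\bigoplus_{v'\in S}K_{2n-1}(k_{v'};\Z/l^k)$. Each ${\bf l}$ lies above $l$ while each $v'$ lies above $p\neq l$, so $N{\bf l}^n$ is a power of $l$ acting on a cyclic $l$--group of exponent $l^{k(v)}$; combined with the prime--permutation action of $({\bf l},F)^{-1}$ and the hypothesis $k\ge k(v)$ governing the exponent of the target, each Euler factor acts trivially, delivering the stated identity.
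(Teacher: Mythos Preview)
Your overall strategy matches the paper's: transfer--boundary compatibility to reduce to $N\circ\partial_E$, Gillet's product formula to pull $\beta_k^{\ast\,n-1}$ past $\partial_E$, the Deligne--Ribet congruence~\eqref{2.9} to convert $\Theta_1$--coefficients into $\Theta_n$--coefficients, and then Galois equivariance of $N$ to descend to $F$. The paper organizes the middle computation slightly differently---it writes $\Theta_1({\bf b},{\bf f}_k)$ as a double sum indexed by cosets of the stabilizer of $w$, and phrases the cyclotomic twist of $\beta_k$ via the reduction diagram for $r_w$ rather than via $\sigma(\beta_k)=[\sigma]\beta_k$ directly---but this is cosmetic; your identity is exactly what that diagram encodes.

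The genuine problem is your final paragraph. You invoke Lemma~\ref{Lemma 2.1} to write $\mathrm{Res}_{E/F}\Theta_n({\bf b},{\bf f}_k)$ as $\Theta_n({\bf b},{\bf f})$ times Euler factors $\prod_{{\bf l}\mid l,\,{\bf l}\nmid{\bf f}}(1-({\bf l},F)^{-1}N{\bf l}^n)$, and then claim each such factor acts as the identity on the target element. That claim does not hold as stated. One has $N{\bf l}^n=l^{f_{\bf l}n}$ with $f_{\bf l}$ the residue degree of ${\bf l}$ over $l$; for $({\bf l},F)^{-1}N{\bf l}^n$ to kill an element of order $l^{k(v)}$ you would need $f_{\bf l}n\ge k(v)$, and this can easily fail (take $n=1$, $f_{\bf l}=1$, and any $v$ with $q_v\equiv 1\pmod{l^2}$, so $k(v)\ge 2$). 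The prime--permutation action of $({\bf l},F)^{-1}$ is an isomorphism and does nothing to reduce orders, so it cannot rescue the argument. The hypothesis $k\ge k(v)$ only controls the exponent of the coefficient group, not the size of $N{\bf l}^n$ relative to it.

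The paper does not route through Lemma~\ref{Lemma 2.1} at this step at all: it obtains the passage from $\Theta_n({\bf b},{\bf f}_k)$ to $\Theta_n({\bf b},{\bf f})$ directly from the commutative square relating $N_{w/v}$ with the $\sigma_{\bf a}^{-1}$--action (the last diagram in the proof). Your detour through the Euler factors is a different bookkeeping of the same descent, but it creates an obstacle that your argument as written does not clear.
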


\begin{proof}
The proof is similar to the proofs of
[Ba1 Theorem 1, pp. 336-340] and [BG1, Proposition 2, pp. 221-222].
The diagram at the end of section 3 gives the following
commutative  diagram of $K$--groups of coefficients

$$\xymatrix{
K_{2n} ({\mathcal O}_{E, S} ;\, \Z/l^k ) \quad
\ar@<0.1ex>[d]^{Tr_{E/F}} \ar[r]^{\partial_{E}} \quad &
\quad\
\bigoplus_{v \in S} \bigoplus_{w\, | \,v} K_{2n-1} (k_w ; \, \Z/l^k )
\ar@<0.1ex>[d]^{N}\\
K_{2n} ({\mathcal O}_{F, S}  ;\, \Z/l^k) \quad
\ar[r]^{\partial_{F}} &
\quad \bigoplus_{v \in S} \, K_{2n-1} (k_v, ;\, \Z/l^k )}\,,
\label{diagram 2.6}$$
Hence we have $\partial_{F} \circ Tr_{E/F} = N \circ \partial_{E}.$
The compatibilities of some of the natural maps mentioned in section 3
which will be used next can be expressed via the following commutative diagrams,
explaining the action  of the groups $G(E/K)$ and $G(F/K)$ on the $K$--groups with coefficients
in the diagram above.

$$\xymatrix{
K_{2n-2} ({\mathcal O}_{E, S}; \, \Z/l^k ) \quad \ar@<0.1ex>[d]^{\sigma_{{\bf a}}^{-1}}
\ar[r]^{r_w} & \quad
K_{2n-2} (k_{w}; \, \Z/l^k)
\ar@<0.1ex>[d]^{\sigma_{{\bf a}}^{-1}}\\
K_{2n-2} ({\mathcal O}_{E, S}; \, \Z/l^k ) \quad
\ar[r]^{r_{w^{\sigma_{{\bf a}}^{-1}}}} & \quad
\, K_{2n-2} (k_{w^{\sigma_{{\bf a}}^{-1}}}; \, \Z/l^k)}
\label{diagram 2.7}$$
The above diagram shows that
\begin{equation}\label{reduction}  r_{w^{\sigma_{{\bf a}}^{-1}}}(\beta_{k}^{\ast \, n-1}) =
r_{w^{\sigma_{{\bf a}}^{-1}}}((\beta_{k}^{\ast \, n-1})^{ N {\bf a}^{n-1}\sigma_{{\bf a}}^{-1}})
= (r_w (\beta_{k}^{\ast \, n-1}))^{ N {\bf a}^{n-1} \sigma_{{\bf a}}^{-1}}.\end{equation}

We can write the 1st Stickelberger element as follows
\begin{equation}\label{theta-one} \Theta_{1} ({\bf b}, {\bf f}_k) =
{\sum_{{\bf a} \, {\rm{mod}} \, {\bf f}_k}}' \,\, (\sum_{{\bf c} \, {\rm{mod}} \, {\bf f}_k, \,
w^{\sigma_{{\bf c}^{-1}}} = w}\,
\Delta_{2} ({\bf a} {\bf c}, {\bf b}, {\bf f}) \sigma_{{\bf c}^{-1}}) \sigma_{{\bf a}^{-1}}\,,\end{equation}
where ${\sum'_{{\bf a} \, \rm{mod} \, {\bf f}_k}}$ denotes the sum over a maximal set $\mathcal S$
of ideal classes ${\bf a} \mod {\bf f}_k$,  such that the primes $w^{\sigma_{{\bf a}}^{-1}}$, for ${\bf a}\in\mathcal S$,  are
distinct. Since for every $m \geq 1$ we have
\begin{equation}
 \Delta_{m+1} ({\bf a}, {\bf b}, {\bf f}) \equiv
N {\bf a}^{m} \, N {\bf b}^{m} \, \Delta_{1} ({\bf a} {\bf c}, {\bf b}, {\bf f})
\mod \,\, w_{m}(K_{{\bf f}})
\label{deltacong1}\end{equation}

(see \cite{DR}),
it is clear that for all $n \geq 2$ we get:

$$\Theta_{n} ({\bf b}, {\bf f}_k) \, \equiv \,$$
$${\sum_{{\bf a} \, {\rm{mod}} \, {\bf f}_k}}' \,\, (\sum_{{\bf c} \, {\rm{mod}} \, {\bf f}_k, \,
w^{\sigma_{{\bf c}^{-1}}} = w}\,
N {\bf a}^{n-1} \, N {\bf c}^{n-1}\, N {\bf b}^{n-1} \, \Delta_{2} ({\bf a} {\bf c}, {\bf b},
{\bf f}_k)
\sigma_{{\bf c}^{-1}}) \sigma_{{\bf a}^{-1}} \,\, {\rm{mod}} \,\, w_{1}(K_{{\bf f}_k}).$$
Equalities (\ref{reduction}) and (\ref{theta-one}), the result of Gillet \cite{Gi} and
the above congruences satisfied by Stickelberger elements lead to the following equalities.

\begin{eqnarray}\nonumber
\partial_{E} ( x_{w, k}^{\Theta_{1} ({\bf b}, {\bf f}_k)} \ast
\beta_{k}^{\ast \, n-1})^{N {\bf b}^{n-1}}= \\
 \nonumber
 ={\sum_{{\bf a} \, {\rm{mod}} \, {\bf f}_k}}'
\,\, \xi_{w, k}^{\sum_{{\bf c} \, {\rm{mod}} \, {\bf f}_k, \, w^{\sigma_{{\bf c}^{-1}}} = w} \,
\Delta_{2} ({\bf a} {\bf c}, {\bf b}, {\bf f}_k) \sigma_{({\bf a} {\bf c})^{-1}}}
\ast (\beta_{k}^{{\ast \, n-1}})^{(N {\bf a} {\bf c})^{n-1} N {\bf b}^{n-1}
\sigma_{(ac)^{-1}}}= \\
\nonumber
 =(\xi_{w, k}
\ast \beta_{k}^{{\ast \, n-1}})^{
\sum'_{{\bf a} \, {\rm{mod}} \, {\bf f}_k} \sum_{{\bf c} \, {\rm{mod}} \, {\bf f}_k, \,
w^{\sigma_{{\bf c}^{-1}}} = w} \, \Delta_{2} ({\bf a} {\bf c}, {\bf b}, {\bf f}_k)
(N {\bf a} {\bf c})^{n-1} N {\bf b}^{n-1} \sigma_{({\bf a} {\bf c})^{-1}}} = \\
\nonumber
=(\xi_{w, k} \ast \beta_{k}^{{\ast \, n-1}})^{\Theta_{n} ({\bf b}, {\bf f}_k)}.
\end{eqnarray}
%$$ =
%{\sum_{a \, {\rm{mod}} \, f_k}}'
%\,\, (\xi_{w, k}
%\ast \beta_{k}^{{\ast \, n-1}})^{\sum_{c \, {\rm{mod}} \, f_k, \,
%w^{\sigma_{c^{-1}}} = w} \, \Delta_{2} (a c, b, f) (Nac)^{n-1} Nb^{n-1}
%\sigma_{(ac)^{-1}}} \,\, = $$
By the first commutative diagram of this proof and the equalities above, we obtain

$$ \partial_{F} (\Lambda (\xi_{v, k}; l^k)) =
N (\partial_{E} ( x_{w, k}^{\Theta_{1} ({\bf b}, {\bf f}_k)} \ast
\beta_{k}^{\ast \, n-1})^{N{\bf b}^{n-1}}) =
N ((\xi_{w, k} \ast \beta_{k}^{{\ast \, n-1}})^{\Theta_{n} ({\bf b}, {\bf f}_k)}) = $$
$$ = (N (\xi_{w, k} \ast \beta_{k}^{{\ast \, n-1}}))^{\Theta_{n} ({\bf b}, {\bf f})} $$
The last equality above is a result of the following well-known commutative diagram.
$$\xymatrix{
K_{2n-1} (k_{w}; \, \Z/l^k ) \quad \ar@<0.1ex>[d]^{N_{w/v}}
\ar[r]^{\sigma_{{\bf a}}^{-1}} & \quad
K_{2n-1} (k_{w^{\sigma_{{\bf a}}^{-1}}}; \, \Z/l^k)
\ar@<0.1ex>[d]^{N_{w^{\sigma_{{\bf a}}^{-1}} / v^{\sigma_{{\bf a}}^{-1}}}}\\
K_{2n-1} (k_{v}; \, \Z/l^k ) \quad
\ar[r]^{\sigma_{{\bf a}}^{-1}} & \quad
\, K_{2n-1} (k_{v^{\sigma_{{\bf a}}^{-1}}}; \, \Z/l^k)}
\label{diagram 2.8}$$

\end{proof}
Observe that for every $m > 0$ and every prime $l$,
the Bockstein exact sequence and results of Quillen
\cite{Q2}, \cite{Q3} give
natural isomorphisms

\begin{equation}
K_{m} ({\mathcal O}_{F, S})_l \cong
\varprojlim_{k} K_{m} ({\mathcal O}_{F, S} ;\, \Z/l^k),
\label{invlimringofintegers}\end{equation}
\begin{equation}
K_{m} (k_v)_l \cong
\varprojlim_{k} K_{m} (k_v ;\, \Z/l^k)\,.
\label{invlimfinitefields}\end{equation}
We define $\Lambda (\xi_{v})
\in K_{2n} ({\mathcal O}_{F, S})_l$
to be the element corresponding to
$(\Lambda (\xi_{v, k}; l^k))_{k} \in
\varprojlim_{k} K_{2n} ({\mathcal O}_{F, S} ;\, \Z/l^k)$
and define $\xi_v \in K_{2n-1} (k_v)_l$
to be the element corresponding to
$(N(\xi_{w, k} \ast \beta_{k}^{\ast \, n-1}))_k \in
\varprojlim_{k} K_{2n-1} (k_v ;\, \Z/l^k)$ via these isomorphisms, respectively.

\begin{proposition}\label{Proposition 4.5} For every $v$ such that
$l\mid q_{v}^n - 1$ and for all $k \geq k(v),$ there are homomorphisms

$$ \Lambda_{v, \, l^k} \, : \, K_{2n-1} (k_v ;\, \Z/l^k) \rightarrow
K_{2n} ({\mathcal O}_{F, S} ;\, \Z/l^k)$$
satisfying the equality
$$\Lambda_{v, \, l^k} \, (N(\xi_{w, k} \ast \beta_{k}^{\ast \, n-1}))
\,\, = \,\, \Lambda (\xi_{v, k}; l^k).$$
\end{proposition}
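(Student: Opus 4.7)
The plan is to exploit the cyclic structure of $K_{2n-1}(k_v; \Z/l^k)$ and define $\Lambda_{v,l^k}$ by specifying its value on a generator. From the Bockstein sequence in Section 3.2 combined with Quillen's computation $K_{2n-1}(k_v) \cong \Z/(q_v^n-1)$ (and vanishing of $K_{2n-2}(k_v)$ for $n>1$, or trivial $l^k$-torsion in $K_0(k_v)$ for $n=1$), one gets
\[
K_{2n-1}(k_v; \Z/l^k) \;\cong\; K_{2n-1}(k_v)/l^k K_{2n-1}(k_v),
\]
which is cyclic of order $l^{\min(k,k(v))}$; for $k \geq k(v)$ this order is exactly $l^{k(v)}$. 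Hence a homomorphism out of this group is uniquely determined by its value on any generator, subject to the order-compatibility constraint that the image be killed by the same power of $l$.

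Next, I would verify that $\alpha_{v,k} := N(\xi_{w,k} \ast \beta_k^{\ast n-1})$ is in fact a generator. Since $\mu_{l^k} \subset k_w$, multiplication by the Bott element $\beta_k^{\ast n-1}$ induces an isomorphism $K_1(k_w; \Z/l^k) \stackrel{\cong}{\to} K_{2n-1}(k_w; \Z/l^k)$ by Dwyer--Friedlander (Section 3.2), so $\xi_{w,k} \ast \beta_k^{\ast n-1}$ generates the cyclic group $K_{2n-1}(k_w; \Z/l^k)$. Composing with the surjective norm map $N_{w/v}$ recalled in Section 3.2 then shows that $\alpha_{v,k}$ generates $K_{2n-1}(k_v; \Z/l^k)$.

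The main obstacle is well-definedness: one must prove that $l^{k(v)} \cdot \Lambda(\xi_{v,k}; l^k) = 0$ in $K_{2n}(\mathcal{O}_{F,S}; \Z/l^k)$. Proposition 4.4 yields $\partial_F(\Lambda(\xi_{v,k}; l^k)) = \alpha_{v,k}^{\Theta_n({\bf b}, {\bf f})}$, which immediately places $l^{k(v)} \Lambda(\xi_{v,k}; l^k)$ in $\ker \partial_F$, i.e.\ in the image of $K_{2n}(\mathcal{O}_F; \Z/l^k)$ via the localization sequence. To upgrade this to genuine vanishing, I would unwind the explicit formula $\Lambda(\xi_{v,k}; l^k) = Tr_{E/F}(x_{w,k}^{\Theta_1({\bf b},{\bf f}_k)} \ast \beta_k^{\ast n-1})^{N{\bf b}^{n-1}}$ and combine it with the congruence $\Delta_{n+1} \equiv N{\bf a}^n N{\bf b}^n \Delta_1 \pmod{w_n(K_{{\bf f}_k})}$ from Section 2, together with the standing hypothesis that $\Theta_1({\bf b}, {\bf f}_k)$ annihilates $K_2(\mathcal{O}_E)_l$. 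The hypothesis forces $x_{w,k}^{\Theta_1({\bf b}, {\bf f}_k)}$ to have order dividing that of $\xi_{w,k}$ in $K_2(\mathcal{O}_{E,S})_l$, and the $w_n$-congruence supplies the numerical factor that, when paired with the Bott-element multiplication and transferred via $Tr_{E/F}$, produces the required $l^{k(v)}$-annihilation in the target. Once well-definedness is established, $\Lambda_{v, l^k}$ is the unique group homomorphism from $K_{2n-1}(k_v; \Z/l^k)$ sending $\alpha_{v,k}$ to $\Lambda(\xi_{v,k}; l^k)$, and the displayed equality then holds by construction.
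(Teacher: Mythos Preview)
Your overall strategy---exploit the cyclicity of $K_{2n-1}(k_v;\Z/l^k)$, exhibit $\alpha_{v,k}:=N(\xi_{w,k}\ast\beta_k^{\ast\,n-1})$ as a generator, and define $\Lambda_{v,l^k}$ by its value on that generator---is reasonable and genuinely different from the paper's construction. The paper does not argue via cyclicity of the target of $N$. Instead it builds the map as a composite: it promotes $\Lambda_1$ to a homomorphism $K_1(k_w;\Z/l^k)\to K_2(\mathcal O_{E,S};\Z/l^k)$, applies $\ast\,\beta_k^{\ast\,n-1}$ on both source and target (using that this is an isomorphism on the finite-field side), post-composes with $Tr_{E/F}$, and then shows the resulting map $K_{2n-1}(k_w;\Z/l^k)\to K_{2n}(\mathcal O_{F,S};\Z/l^k)$ factors through the $G(k_w/k_v)$--coinvariants, which the norm $N_{w/v}$ identifies with $K_{2n-1}(k_v;\Z/l^k)$.

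Your first two steps (cyclicity of the source, and $\alpha_{v,k}$ generating it) are fine. The gap is in your well-definedness argument. You correctly isolate the issue as proving $l^{k(v)}\cdot\Lambda(\xi_{v,k};l^k)=0$, and you correctly note that Proposition~4.4 only places $l^{k(v)}\Lambda(\xi_{v,k};l^k)$ in $\ker\partial_F$, not in $0$. But your proposed upgrade via the Deligne--Ribet congruence $\Delta_{n+1}\equiv N\mathbf a^n N\mathbf b^n\,\Delta_1\pmod{w_n(K_{\mathbf f_k})}$ does not do the job: that congruence is what links $\Theta_n$ to $\Theta_1$ in the proof of Proposition~4.4, and it carries no information about the order of $x_{w,k}^{\Theta_1}\ast\beta_k^{\ast\,n-1}$ after transfer. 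The bound on the order of $x_{w,k}^{\Theta_1}$ that you extract from the annihilation hypothesis is only $l^{v_l(q_w-1)}\geq l^k$, which is too weak.

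What actually forces the $l^{k(v)}$--annihilation is precisely the paper's coinvariant step, rephrased: the map $\Lambda_1\ast\beta_k^{\ast\,n-1}$ is equivariant for the decomposition subgroup $G(k_w/k_v)\subset G(E/F)$ (one checks $\Lambda_1$ is $G(E/F)$--equivariant using that any two lifts of $\xi_{w,k}$ differ by an element of $K_2(\mathcal O_E)$, killed by $\Theta_1$; and the Bott element transforms by the same scalar on source and target), while $Tr_{E/F}$ is $G(E/F)$--invariant on its target. Hence the composite annihilates $(Fr_v-1)K_{2n-1}(k_w;\Z/l^k)=l^{k(v)}K_{2n-1}(k_w;\Z/l^k)$, which is exactly the order condition your generator-based definition needs. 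Without this equivariance/invariance argument (or an equivalent), your construction is not justified.
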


\begin{proof}
The definition of $\Lambda_1$ (see the beginning of section 4)
combined with the natural
isomorphism $K_1(k_w) / l^k \cong K_1(k_w; \Z/l^k\Z)$ and the natural
monomorphism $K_1({\mathcal O}_{E, S}) / l^k \to K_1({\mathcal O}_{E, S}; \Z/l^k\Z),$
coming from the corresponding Bockstein exact sequences, leads to the following homomorphism
$$\Lambda_1 : K_1(k_w; \Z/l^k\Z) \to  K_2({\mathcal O}_{E,S}; \Z/l^k\Z).$$
Multiplying on the target and on the source of this homomorphism with the $n-1$ power
of the Bott element and applying the natural isomorphism:
$$\xymatrix
{K_1(k_w; \Z/l^k\Z)\ar[r]_{\sim}^{{\ast\beta_k^{\ast n-1}}} &K_{2n-1}(k_w; \Z/l^k\Z)}$$
(see section 3)
show that there exists a unique homomorphism
$$\Lambda_{1} \ast \beta_{k}^{\ast \, n-1}\, :\,K_{2n-1} (k_w ;\, \Z/l^k) \rightarrow
K_{2n} ({\mathcal O}_{E, S} ;\, \Z/l^k),$$
sending  $ \xi_{w, k} \ast \beta_{k}^{\ast \, n-1} \rightarrow
x_{w, k}^{\Theta_{1} ({\bf b}, {\bf f}_{k})} \ast \beta_{k}^{\ast \, n-1}$. (Note that the Bott elements $\beta_k$
showing up in the left and right of the equality above live in $K_2(k_w; \, \Z/l^k\Z)$ and
$K_2(O_{E, S}; \, \Z/l^k\Z)$, respectively.)
Next, we compose the homomorphisms $\Lambda_{1} \ast \beta_{k}^{\ast \, n-1}$ defined above and
$$Tr_{E/F}\, : \, K_{2n} ({\mathcal O}_{E, S} ;\, \Z/l^k) \rightarrow
K_{2n} ({\mathcal O}_{F, S} ;\, \Z/l^k)$$
to obtain the following homomorphism:
$$ Tr_{E/F}\circ (\Lambda_{1} \ast \beta_{k}^{\ast \, n-1}): K_{2n-1} (k_w ;\, \Z/l^k) \rightarrow
K_{2n} ({\mathcal O}_{F, S} ;\, \Z/l^k)\,.$$
This homomorphism factors through the quotient of $G(k_w/k_v)$--coinvariants
$$K_{2n-1} (k_w ;\, \Z/l^k)_{G(k_{w} / k_{v})}:=
K_{2n-1} (k_w ;\, \Z/l^k) / K_{2n-1} (k_w ;\, \Z/l^k)^{Fr_v - Id}\,,$$
where $Fr_{v} \in G(k_{w} / k_{v})\subseteq G(E/F)$ is the Frobenius element
of the prime $w$ over $v.$ Since $Fr_v$ acts via $q_v^n$--powers on $K_{2n-1}(k_w)$,
$K_{2n-1} (k_w ;\, \Z/l^k)\cong K_{2n-1}(k_w)/l^k$ (see section 3) and $k\geq k(v)$, we have
$$K_{2n-1} (k_w ;\, \Z/l^k)_{G(k_{w} / k_{v})} \cong K_{2n-1} (k_w ;\, \Z/l^k) / l^{k(v)}\cong
K_{2n-1} (k_w)/l^{k(v)}.$$
The obvious commutative diagram with surjective vertical morphisms (see \S 3)
$$\xymatrix{
K_{2n-1} (k_w)/l^k \quad \ar@<0.1ex>[d]^{N_{w /v}}
\ar[r]^{\cong}  \quad &
\quad K_{2n-1} (k_w ;\, \Z/l^k)
\ar@<0.1ex>[d]^{N_{w /v}}\\
K_{2n-1} (k_v)/l^k \quad
\ar[r]^{\cong} &
\quad \, K_{2n-1} (k_v ;\, \Z/l^k)}
\label{diagram 2.51}$$
combined with the last isomorphism above, gives an isomorphism
$$\xymatrix{K_{2n-1} (k_w ;\, \Z/l^k)_{G(k_{w} / k_{v})}
\ar[r]^{\quad N_{w /v}}_{\quad\sim} &
K_{2n-1} (k_v ;\, \Z/l^k)}$$
Now, the required homomorphism
\begin{equation}
\xymatrix{\Lambda_{v, \, l^k}\, : \, K_{2n-1} (k_v ; \, \Z/l^k) \ar[r] & K_{2n} ({{\mathcal O}_{F, S}} ; \, \Z/l^k) }
\label{tag 4.6}
\end{equation}
is defined by $$\Lambda_{v, \, l^k}(x):= [Tr_{E/F}\circ (\Lambda_{1} \ast \beta_{k}^{\ast \, n-1})\circ {N_{w /v}}^{-1}(x)]^{N{\bf b}^{n-1}}\,,$$
for all $x\in K_{2n-1} (k_v ; \, \Z/l^k)$.
By definition, this map sends $N(\xi_{w, k} \ast \beta_{k}^{\ast \, n-1})$ onto the element
$\Lambda (\xi_{v, k}; l^k):=Tr_{E/F}(x_{w, k}^{\Theta_{1} ({\bf b}, {\bf f}_{k})} \ast \beta_{k}^{\ast \, n-1})^{N{\bf b}^{n-1}}$.
\end{proof}
\bigskip

\noindent It is very easy to see that the homomorphisms $\Lambda_{v, \, l^k}$ constructed above are compatible with the coefficient
reduction maps $r_{k^{\prime}/k}$, for all $k'\geq k\geq k(v)$. This permits us to construct homomorphisms
$$ \Lambda_{v}:=\varprojlim_{k} \Lambda_{v, \, l^k}:
K_{2n-1} (k_v)_l \rightarrow K_{2n} ({\mathcal O}_{F, S})_l,
$$
for all $v$ as in the last proposition. Observe that $K_{2n}({\mathcal O}_{F, S}) \subset K_{2n} (F)$, hence we can
think of the maps $\Lambda_v$ as
$$\Lambda_{v} \, : \, K_{2n-1} (k_v)_l \rightarrow K_{2n} (F)_l.$$

\begin{definition}\label{Definition 4.6}
We define the map $\Lambda$ by
$$\Lambda \, : \, \bigoplus_{v} K_{2n-1} (k_v)_l
\, \rightarrow K_{2n} (F)_l$$
$$\Lambda:=\Lambda_n \, := \, \prod_{v} \Lambda_v.$$
\end{definition}

\noindent
\section{Main results}

\begin{theorem}\label{Theorem 5.1}
The map $\Lambda:=\Lambda_n$ satisfies the following property.
$$\partial_{F} \circ \Lambda (\xi_{v}) =
\xi_{v}^{\Theta_{n} ({\bf b}, {\bf f})}
$$
\end{theorem}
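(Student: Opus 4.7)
The plan is to deduce the theorem from the finite-level identity of Proposition \ref{Proposition 4.4} by passing to the inverse limit over $k$. Since $\Lambda=\prod_v\Lambda_v$ by Definition \ref{Definition 4.6} and $\xi_v$ lies in the single summand $K_{2n-1}(k_v)_l$ of $\bigoplus_v K_{2n-1}(k_v)_l$, it is enough to prove
$$\partial_F\bigl(\Lambda_v(\xi_v)\bigr)=\xi_v^{\Theta_n({\bf b},{\bf f})}$$
at each prime $v$. For primes $v$ with $l\nmid q_v^n-1$ the group $K_{2n-1}(k_v)_l$ vanishes and the identity is trivial, so I may assume $l\mid q_v^n-1$ and consequently that the level index $k(v)$ from Proposition \ref{Proposition 4.5} is finite and $k\geq k(v)$ is available.

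\medskip

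Both sides of the desired equality are defined via the canonical identifications (\ref{invlimringofintegers}) and (\ref{invlimfinitefields}): by construction $\Lambda_v(\xi_v)\in K_{2n}(\mathcal O_{F,S})_l$ corresponds to the system $(\Lambda(\xi_{v,k};l^k))_k$, while $\xi_v\in K_{2n-1}(k_v)_l$ corresponds to the system $(N(\xi_{w,k}\ast\beta_k^{\ast n-1}))_k$. Their compatibility with the coefficient reduction maps $r_{k^{\prime}/k}$ is precisely what Lemmas \ref{Lemma 4.2} and \ref{Lemma 4.3} provide. Proposition \ref{Proposition 4.4} then furnishes, for every $k\geq k(v)$, the level-$k$ equality
$$\partial_F\bigl(\Lambda(\xi_{v,k};l^k)\bigr)=\bigl(N(\xi_{w,k}\ast\beta_k^{\ast n-1})\bigr)^{\Theta_n({\bf b},{\bf f})}.$$

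\medskip

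Passing to $\varprojlim_k$ will finish the proof, provided $\partial_F$ and the Galois action by $\Theta_n({\bf b},{\bf f})\in\Z[G(F/K)]$ both commute with the reduction maps. The former is standard naturality of the Quillen localization sequence with respect to the Bockstein transformation (it is induced by a map of spectra), and the latter holds because the action of $G(F/K)$ on $K$-groups with coefficients is functorial. I expect the only real obstacle to be purely bookkeeping: ensuring that the assembled inverse systems in $K_{2n}(\mathcal O_{F,S})_l$ and $K_{2n-1}(k_v)_l$ indeed correspond under $\partial_F$ and under exponentiation by $\Theta_n({\bf b},{\bf f})$. Once this coherence is in hand, taking $\varprojlim_k$ of the displayed finite-level identity yields the theorem.
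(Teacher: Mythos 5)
Your proposal is correct and follows essentially the same route as the paper: the paper likewise deduces the theorem from Proposition \ref{Proposition 4.4} (together with Proposition \ref{Proposition 4.5} and the compatibilities of Lemmas \ref{Lemma 4.2} and \ref{Lemma 4.3}) by passing to the inverse limit over $k$ under the identifications (\ref{invlimringofintegers}) and (\ref{invlimfinitefields}), using the Bockstein comparison diagram to identify $\varprojlim_k$ of the finite-coefficient boundary maps with $\partial_F$ on the $l$-primary parts.
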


\begin{proof}
Consider the following commutative diagram.
$$\xymatrix{
K_{2n} ({\mathcal O}_{F, S})/l^k \quad \ar@<0.1ex>[d]^{}
\ar[r]^{\bigoplus_{v \in S} \, \partial_{v}} \quad &
\quad\quad
\bigoplus_{v \in S}  K_{2n-1} (k_v)/l^k
\ar@<0.1ex>[d]^{}\\
K_{2n} ({\mathcal O}_{F, S} ;\, \Z/l^k) \quad
\ar[r]^{\bigoplus_{v \in S} \, \partial_{v}} &
\quad\quad\quad \bigoplus_{v \in S} \, K_{2n-1} (k_v ;\, \Z/l^k)}
\label{diagram 5.20}$$
The vertical arrows in the diagram come from the Bockstein
exact sequence. It is clear from the diagram that the inverse limit
over $k$ of the bottom horizontal arrow
gives the boundary map $\partial_{F} =
\bigoplus_{v \in S} \partial_{v}:$
$$
\partial_{F} \, :\, K_{2n} ({\mathcal O}_{F, S})_l  \rightarrow
\bigoplus_{v}  K_{2n-1} (k_v)_l.
$$
Now, the theorem follows from Propositions \ref{Proposition 4.4} and \ref{Proposition 4.5}.
\end{proof}
\medskip

In the next proposition we will construct a Stickelberger splitting map $\Gamma$
which is complementary to the map $\Lambda$ constructed above.
$$0 \stackrel{}{\longrightarrow} K_{2n} ({\mathcal O}_{F})_l
{{\stackrel{i}{\longrightarrow}} \atop {\stackrel{\Gamma}{\longleftarrow}}}
K_{2n} (F)_l
{{\stackrel{\partial_F}{\longrightarrow}} \atop {\stackrel{\Lambda}{\longleftarrow}}}
\bigoplus_{v} K_{2n-1} (k_v)_l
\stackrel{}{\longrightarrow} 0.$$

%\begin{equation}
%\xymatrix{ 0 \,\, \ar@{->}[r]&  K_{2n} ({\mathcal O}_{F})_l  \ar@{->}[r]^{i} &
%K_{2n} (F)  \ar@{->}[r]^{\partial} & \bigoplus_{v} K_{2n-1} (k_v)_l  \ar@{->}[r]& 0}
%\label{tag 4.8}
%\end{equation}

\begin{proposition}\label{Proposition 5.2} The existence of the map
$\Lambda $ satisfying the property
$\partial_{F} \circ \Lambda (\xi_{v}) =
\xi_{v}^{\Theta_{n} ({\bf b}, {\bf f})}
$
is equivalent to the existence of the map
$\Gamma \, : \, K_{2n} (F)_l \rightarrow K_{2n} ({\mathcal O}_{F})_l$
with the property
$\Gamma \circ i (\eta) =
\eta^{\Theta_{n} ({\bf b}, {\bf f})}.$ Moreover $\Gamma \circ \Lambda = 0.$
\end{proposition}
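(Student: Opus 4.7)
The plan is to exploit the exactness of the Quillen localization sequence
$$0 \to K_{2n}({\mathcal O}_F)_l \stackrel{i}{\to} K_{2n}(F)_l \stackrel{\partial_F}{\to} \bigoplus_v K_{2n-1}(k_v)_l \to 0$$
together with the fact that multiplication by $\theta := \Theta_n({\bf b}, {\bf f})$ is an endomorphism of each term commuting with both $i$ and $\partial_F$. Granted this exactness (which the paper invokes in the diagram just above the statement), the proposition reduces to a short diagram chase, and for clarity I write all maps additively.

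For the direction $\Lambda \Rightarrow \Gamma$, I would form the endomorphism $\theta \cdot \mathrm{id} - \Lambda \circ \partial_F$ of $K_{2n}(F)_l$. Since $\partial_F \circ \Lambda = \theta$ by Theorem \ref{Theorem 5.1}, post-composition with $\partial_F$ kills this endomorphism, so its image lies inside $\ker \partial_F = i(K_{2n}({\mathcal O}_F)_l)$. Injectivity of $i$ then produces a unique $\Gamma: K_{2n}(F)_l \to K_{2n}({\mathcal O}_F)_l$ satisfying $i \circ \Gamma = \theta \cdot \mathrm{id} - \Lambda \circ \partial_F$. Pre-composing with $i$ and using $\partial_F \circ i = 0$ yields $i \circ \Gamma \circ i = \theta \cdot i$, and hence $\Gamma \circ i = \theta \cdot \mathrm{id}$.

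For the reverse direction $\Gamma \Rightarrow \Lambda$, given $\xi \in \bigoplus_v K_{2n-1}(k_v)_l$, I would pick any preimage $\eta \in K_{2n}(F)_l$ under $\partial_F$ (which exists by surjectivity) and set $\Lambda(\xi) := \theta \eta - i\Gamma(\eta)$. Independence of the chosen lift follows from $\Gamma \circ i = \theta$: replacing $\eta$ by $\eta + i(\alpha)$ alters the formula by $\theta i(\alpha) - i\Gamma(i(\alpha)) = i(\theta \alpha) - i(\theta \alpha) = 0$. Applying $\partial_F$ to the defining formula yields $\partial_F \Lambda(\xi) = \theta \partial_F \eta = \theta \xi$, as required.

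Finally, $\Gamma \circ \Lambda = 0$ drops out automatically: from $i \circ \Gamma = \theta \cdot \mathrm{id} - \Lambda \circ \partial_F$ one computes
$$i \circ \Gamma \circ \Lambda = \theta \cdot \Lambda - \Lambda \circ (\partial_F \circ \Lambda) = \theta \Lambda - \theta \Lambda = 0,$$
and injectivity of $i$ gives $\Gamma \circ \Lambda = 0$. No step is genuinely hard; the only point deserving any care is the verification that $\theta$-multiplication is well defined and commutes with the structural maps of the localization sequence, which is immediate from the $\Z[G(F/K)]$-module structure on each term.
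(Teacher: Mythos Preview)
Your proof is correct and follows essentially the same approach as the paper: the paper defines $\Gamma(\eta) := \Lambda(\partial_F(\eta^{-1}))\cdot \eta^{\Theta_n(\mathbf{b},\mathbf{f})}$ and, in the other direction, $\Lambda((\xi_v)) := \Gamma(\eta^{-1})\cdot \eta^{\Theta_n(\mathbf{b},\mathbf{f})}$ for a lift $\eta$, which in your additive notation are exactly $i\circ\Gamma = \theta\cdot\mathrm{id} - \Lambda\circ\partial_F$ and $\Lambda(\xi) = \theta\eta - i\Gamma(\eta)$. The verification of $\Gamma\circ\Lambda = 0$ is likewise the same computation; note that both your argument and the paper's silently use that $\Lambda$ commutes with multiplication by $\theta$ (equivalently, is $\Z[G(F/K)]$--linear), since the step $\Lambda\circ(\partial_F\circ\Lambda) = \Lambda\circ\theta = \theta\circ\Lambda$ requires it.
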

\begin{proof}
Assume that we have the map $\Lambda.$ For any $\eta \in K_{2n} (F)_l$ define
$$\Gamma (\eta) \, := \,
\Lambda \circ \partial_{F} (\eta^{-1}) \, \eta^{\Theta_{n} ({\bf b}, {\bf f})}$$
Observe that, in principle, we have  $\Gamma (\eta) \in K_{2n} (F)_l.$ However, by
Theorem \ref{Theorem 5.1},
$$\partial_{F} (\Gamma (\eta)) = \partial_{F} ( \Lambda \circ \partial_{F} (\eta^{-1})
\, \eta^{\Theta_{n} ({\bf b}, {\bf f})}) = \partial_{F} (\eta)^{ - \Theta_{n} ({\bf b}, {\bf f})}
\partial_{F} (\eta)^{\Theta_{n} ({\bf b}, {\bf f})} = 1.$$
Hence $\Gamma (\eta) \in K_{2n} ({\mathcal O}_{F})_l.$ Moreover, for $\eta \in
K_{2n} ({\mathcal O}_{F})_l$, we have
$$\Gamma \circ i (\eta) = \Lambda \circ \partial_{F} (i(\eta))^{-1} \, i(\eta)^{\Theta_{n} ({\bf b}, {\bf f})} = \eta^{\Theta_{n} ({\bf b}, {\bf f})}.$$

Now, assume that we have the map $\Gamma.$ For any $(\xi_v) \in \bigoplus_{v} K_{2n-1} (k_v)_l$ define
$$\Lambda ((\xi_v)) \, := \,
\Gamma (\eta^{-1}) \, \eta^{\Theta_{n} ({\bf b}, {\bf f})},$$
where $\eta \in K_{2n} (F)_l$, such that $\partial_{F} (\eta) = (\xi_v).$
Observe that the definition does not depend on
the choice of $\eta$. Indeed, for any other $\eta^{\prime}$ such that
$\partial_{F} (\eta^{\prime}) = (\xi_v)$ we have $\eta^{\prime} \eta^{-1} \in
K_{2n} ({\mathcal O}_{F})_l.$ So $\Gamma ((\eta^{\prime} \eta^{-1} )) \, (\eta^{\prime \, -1}\eta)^{\Theta_{n} ({\bf b}, {\bf f})} = 1$ by the property of $\Gamma$
since $\eta^{\prime} \eta^{-1} = i(\eta^{\prime} \eta^{-1}).$ It is clear that
$\partial_{F} \circ \Lambda (\xi_{v}) =
\xi_{v}^{\Theta_{n} ({\bf b}, {\bf f})}.$
Moreover,  by Theorem \ref{Theorem 5.1} we have the following equalities
$$\Gamma \circ \Lambda ((\xi_v))\, = \,
\Lambda ( \partial_{F} (\Lambda ((\xi_v))^{-1})) \, \Lambda ((\xi_v))^{\Theta_{n} ({\bf b}, {\bf f})} = (\Lambda ((\xi_v)))^{- \Theta_{n} ({\bf b}, {\bf f})} \, \Lambda ((\xi_v))^{\Theta_{n} ({\bf b}, {\bf f})} = 1$$
\end{proof}

\begin{remark}\label{Remark 5.3}
Observe that this map $\Lambda$ is defined in the same way for both cases
$l \not \,\mid n$ and $l \mid n.$ If restricted to the particular case
$K=\Bbb Q$, our construction improves upon that of \cite{Ba1}. In loc. cit., in the case $l\mid n$
the map $\Lambda$ was constructed only up to a factor of $l^{v_l(n)}$.
\end{remark}

\begin{theorem}\label{Theorem 5.4}
Assume that the Stickelberger elements $\Theta_{1} ({\bf b}, {\bf f}_k)$ annihilate
the groups $K_{2} ({\mathcal O}_{F_k})_l$ for all $k \geq 1.$
Then the Stickelberger element $\Theta_{n} ({\bf b}, {\bf f})$ annihilates the group
$div \, K_{2n} (F)_l$ for all $n \geq 1.$
\end{theorem}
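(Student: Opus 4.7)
The plan is to apply the splitting-companion map $\Gamma$ of Proposition \ref{Proposition 5.2} to a divisible element and exploit the fact that its target $K_{2n}({\mathcal O}_F)_l$ is a finite abelian group.

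First I would observe that $div(K_{2n}(F)_l)\subseteq K_{2n}({\mathcal O}_F)_l$. Indeed, in the Quillen localization sequence
$$0\longrightarrow K_{2n}({\mathcal O}_F)_l\longrightarrow K_{2n}(F)_l\stackrel{\partial_F}{\longrightarrow}\bigoplus_v K_{2n-1}(k_v)_l,$$
the right-hand term is a direct sum of finite cyclic $l$-groups and therefore contains no nonzero $l$-divisible element; hence every $\eta\in div(K_{2n}(F)_l)$ satisfies $\partial_F(\eta)=1$, i.e.\ lies in $K_{2n}({\mathcal O}_F)_l$. By Quillen's finite generation theorem together with Borel's vanishing of the $\Q$-rank in positive even degree, $K_{2n}({\mathcal O}_F)_l$ is a finite abelian $l$-group; let $l^M$ denote its exponent.

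Fix $\eta\in div(K_{2n}(F)_l)$ and an integer $k\geq M$. By divisibility, choose $\eta_k\in K_{2n}(F)$ with $\eta_k^{l^k}=\eta$; since the order of $\eta_k$ divides $l^k\cdot\text{ord}(\eta)$, we have $\eta_k\in K_{2n}(F)_l$. Applying the homomorphism $\Gamma$ of Proposition \ref{Proposition 5.2} yields
$$\Gamma(\eta)=\Gamma(\eta_k^{l^k})=\Gamma(\eta_k)^{l^k}=1,$$
the last equality because $\Gamma(\eta_k)\in K_{2n}({\mathcal O}_F)_l$ is killed by $l^M\leq l^k$. On the other hand, since $\eta\in K_{2n}({\mathcal O}_F)_l$, Proposition \ref{Proposition 5.2} also gives $\Gamma(\eta)=\Gamma\circ i(\eta)=\eta^{\Theta_n({\bf b},{\bf f})}$, so $\eta^{\Theta_n({\bf b},{\bf f})}=1$, which is the asserted annihilation.

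The only point that genuinely requires care is the reduction $div(K_{2n}(F)_l)\subseteq K_{2n}({\mathcal O}_F)_l$ together with the finiteness of the latter group; once these are in hand, the argument collapses to the elementary observation that any homomorphism from $K_{2n}(F)_l$ into a group of bounded exponent must annihilate every divisible element. The substantive work has already been carried out in constructing $\Lambda$ (Theorem \ref{Theorem 5.1}) and deducing $\Gamma$ (Proposition \ref{Proposition 5.2}); no further obstacle arises at this stage.
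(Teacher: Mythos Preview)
Your proof is correct and is essentially the same argument as the paper's, just packaged through $\Gamma$ rather than $\Lambda$. Since $\Gamma(\eta)=\Lambda(\partial_F(\eta^{-1}))\,\eta^{\Theta_n({\bf b},{\bf f})}$ by definition (Proposition~\ref{Proposition 5.2}), your computation $\Gamma(\eta_k)^{l^k}=1$ unfolds to exactly the paper's line $(\Lambda(\partial_F(x^{-1}))\,x^{\Theta_n({\bf b},{\bf f})})^{l^m}=d^{\Theta_n({\bf b},{\bf f})}=1$; you have simply made the inclusion $div(K_{2n}(F)_l)\subseteq K_{2n}({\mathcal O}_F)_l$ and the finiteness of the latter group explicit, whereas the paper uses these implicitly.
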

\begin{proof}
The proof is very similar to the proof of [Ba1, Cor. 1, p. 340].
Let $d \in div \, K_{2n} (F)_l.$ Take $m \in \N$ such that $d = x^{l^m}$ for some
$x \in K_{2n} (F)_l$ and $l^m$ annihilates $K_{2n} ({\mathcal O}_{F})_l.$
Then $\Lambda(\partial_{F} (x^{-1}))\, x^{\Theta_{n} ({\bf b}, {\bf f})} \in
K_{2n} ({\mathcal O}_{F})_l$ because
$$\partial_{F} ( \Lambda(\partial_{F} (x^{-1}))\, x^{\Theta_{n} ({\bf b}, {\bf f})} ) =
\partial_{F} (x)^{- \Theta_{n} ({\bf b}, {\bf f})}
\partial_{F} (x)^{\Theta_{n} ({\bf b}, {\bf f})} = 1$$
by Theorem \ref{Theorem 5.1} and Galois equivariance of $\partial_{F}.$
Hence $$(\Lambda(\partial_{F} (x^{-1}))\, x^{\Theta_{n} ({\bf b}, {\bf f})})^{l^m} =
\Lambda(\partial_{F} (d^{-1}))\, d^{\Theta_{n} ({\bf b}, {\bf f})} =
d^{\Theta_{n} ({\bf b}, {\bf f})} = 1$$
\end{proof}

\noindent
\begin{remark}\label{second proof of annihilation of div}
Observe that we can restrict the map $\Lambda$ to the $l^k$--torsion part, for
any $k \geq 1.$ For any $k \gg 0$, there is an exact sequence
$$
0 \stackrel{}{\longrightarrow} K_{2n} ({\mathcal O}_{F}) [l^k]
\stackrel{}{\longrightarrow} K_{2n} (F) [l^k]
{{\stackrel{\partial_F}{\longrightarrow}} \atop
{{\stackrel{\Lambda}{\longleftarrow}}}} \bigoplus_{v} K_{2n - 1} (k_v) [l^k]
\stackrel{}{\longrightarrow} div
(K_{2n} (F)_l) \stackrel{}{\longrightarrow} 0
$$
By Theorem \ref{Theorem 5.1}, we know that $\partial_F \circ \Lambda$
is the multiplication by $\Theta_{n} ({\bf b}, {\bf f}).$
As pointed out in the Introduction, this implies the annihilation of
$div\,( K_{2n} (F)_l)$ and consequently gives a second proof for Theorem
\ref{Theorem 5.4}
\end{remark}
\bigskip

\noindent
Let us define $F_0 := F$ and:
$$
\Theta_{n} ({\bf b}, {\bf f}_{0}) \,\, = \,\,
\left\{
\begin{array}{lll}
\bigl( \, \prod_{{{\bf l} \, \not \, | \, {\bf f}} \atop
{{\bf l} \, | \, l}} \, (1 - ({\bf l}, \, F)^{-1} N{\bf l}^{n}) \, \bigr) \,\,
\Theta_{n} ({\bf b}, {\bf f}) &\rm{if}&l \not| {\bf f}\\
\Theta_{n} ({\bf b}, {\bf f}) &\rm{if}& l | {\bf f}\\
\end{array}\right.
$$
Hence by the formula (\ref{2.53}) we get
\begin{equation}
Res_{F_{k+1} / F_{k}} \,\, \Theta_{n} ({\bf b}, {\bf f}_{k+1})
=  \,\, \Theta_{n} ({\bf b}, {\bf f}_k)
\label{RestofStickinIwasawaTower}\end{equation}
Hence by formula (\ref{RestofStickinIwasawaTower}) we can define the element
\begin{equation}
\Theta_{n} ({\bf b}, {\bf f}_{\infty}) :=
\varprojlim_{k} \Theta_{n} ({\bf b}, {\bf f}_{k}) \in \varprojlim_{k}\, \Z_l [G(F_k / F)].
\label{InfiniteStickelberger}\end{equation}

\begin{corollary}\label{Corollary 5.5}
Assume that the Stickelberger elements
$\Theta_{1} ({\bf b}, {\bf f}_k)$ annihilate
the groups $K_{2} ({\mathcal O}_{F_k})_l$ for all $k \geq 1.$
Then the Stickelberger element $\Theta_{n} ({\bf b}, {\bf f}_k)$ annihilates the group
$div \, K_{2n} (F_k)_l$ for every $k \geq 0$ and every $n \geq 1.$
In particular $\Theta_{n} ({\bf b}, {\bf f}_{\infty})$ annihilates the group
$\varinjlim_{k} div \, K_{2n} (F_{k})_l$ for every $n \geq 1.$
\end{corollary}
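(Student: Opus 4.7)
The plan is to reduce the corollary to Theorem~\ref{Theorem 5.4} applied to each $F_k$ in turn, and then to pass to the direct limit using the restriction compatibility (\ref{RestofStickinIwasawaTower}).

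First, I would transfer the hypothesis to the cyclotomic tower over $F_k$. This tower is a cofinal sub-tower of the one over $F$: one has $F_k(\mu_{l^j}) = F_{\max(k,j)}$ and the corresponding conductor over $K$ is ${\bf f}_{\max(k,j)}$. Hence the hypothesis of Theorem~\ref{Theorem 5.4} with base field $F_k$---that $\Theta_1({\bf b}, {\bf f}_{\max(k,j)})$ annihilate $K_2(\mathcal{O}_{F_{\max(k,j)}})_l$ for all $j \geq 1$---is immediate from the hypothesis of the corollary. Theorem~\ref{Theorem 5.4} then shows that $\Theta_n({\bf b}, {\bf f}_k)$ annihilates $div\, K_{2n}(F_k)_l$ for every $k \geq 1$ and every $n \geq 1$. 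The case $k = 0$ follows from Theorem~\ref{Theorem 5.4} itself, using that $\Theta_n({\bf b}, {\bf f}_0)$ is an integral multiple of $\Theta_n({\bf b}, {\bf f})$ in $\Z[G(F/K)]$.

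Second, I would pass to the direct limit. For $k \leq k^{\prime}$, the natural map $K_{2n}(F_k)_l \to K_{2n}(F_{k^{\prime}})_l$ induced by the inclusion $F_k \hookrightarrow F_{k^{\prime}}$ carries $div\, K_{2n}(F_k)_l$ into $div\, K_{2n}(F_{k^{\prime}})_l$ and is equivariant with respect to the restriction $G(F_{k^{\prime}}/K) \twoheadrightarrow G(F_k/K)$. Combined with (\ref{RestofStickinIwasawaTower}), this equips $\varinjlim_k div\, K_{2n}(F_k)_l$ with a well-defined action of $\varprojlim_k \Z_l[G(F_k/K)]$, and in particular with the action of $\Theta_n({\bf b}, {\bf f}_\infty)$. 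On a class represented by $x \in div\, K_{2n}(F_k)_l$, this action agrees with that of $\Theta_n({\bf b}, {\bf f}_k)$ on $x$, which vanishes by the first step. Therefore $\Theta_n({\bf b}, {\bf f}_\infty)$ annihilates $\varinjlim_k div\, K_{2n}(F_k)_l$.

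The main point requiring care is the compatibility at the direct-limit level: that the action of $\Theta_n({\bf b}, {\bf f}_\infty)$ on a class $[x]$ in the direct limit coincides with the action of its $k$-th component on a representative $x$. This is a formal verification resting on (\ref{RestofStickinIwasawaTower}) and the Galois equivariance of the field inclusions, and presents no substantial technical obstacle; the real work of the corollary is already performed in Theorem~\ref{Theorem 5.4}.
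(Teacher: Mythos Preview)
Your proposal is correct and follows the same approach as the paper, which simply says the corollary ``follows immediately from Theorem~\ref{Theorem 5.4}.'' You have merely made explicit the two points the paper leaves implicit: that the cyclotomic tower over each $F_k$ is cofinal in the tower over $F$ (so Theorem~\ref{Theorem 5.4} applies with $F$ replaced by $F_k$), and that the restriction compatibility~(\ref{RestofStickinIwasawaTower}) lets the annihilation pass to the direct limit.
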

\begin{proof}
Follows immediately from Theorem \ref{Theorem 5.4}.
\end{proof}

\begin{theorem}\label{Theorem 5.6}
Let $F/\Q$ be an abelian extensions of conductor $f.$ Let an integer $b$ be prime
to $w_{n+1}(\Q(\mu_{lf}))|K_{2} ({\mathcal O}_{F})_l|.$ Then $\Theta_{n} (b, f)$
annihilates the group $div \, K_{2n} (F)_l$ for all $n \geq 1.$
\end{theorem}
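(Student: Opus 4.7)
The plan is to derive Theorem \ref{Theorem 5.6} as a direct corollary of Theorem \ref{Theorem 5.4}. All that needs to be checked is that, in the classical setting $K=\Q$, the standing hypothesis of Theorem \ref{Theorem 5.4} --- namely, that $\Theta_1({\bf b}, {\bf f}_k)$ annihilates $K_2(\mathcal{O}_{F_k})_l$ for every $k \geq 1$ --- holds automatically under the coprimality conditions imposed on $b$.

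Since $F/\Q$ is abelian, each layer $F_k = F(\mu_{l^k})$ is abelian (and CM) over $\Q$ with conductor $f_k$, so every $F_k/\Q$ falls under the scope of the Coates--Sinnott theorem recalled in the Introduction, whose proof uses Stickelberger's theorem over $\Q$. First I would observe that the inclusion $\Q(\mu_{lf}) \supseteq \Q(\mu_l)$ forces $l \mid w_{n+1}(\Q(\mu_{lf}))$, so the hypothesis $(b, w_{n+1}(\Q(\mu_{lf}))) = 1$ in particular yields $(b, l) = 1$; combined with Deligne--Ribet integrality, this makes $\Theta_1({\bf b}, {\bf f}_k)$ a well-defined element of $\Z_l[G(F_k/\Q)]$ acting on $K_2(\mathcal{O}_{F_k})_l$ for every $k$.

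The main step is to verify that this action is trivial for each $k$. I would invoke the \'etale-cohomological form of the Coates--Sinnott theorem, which applies to the abelian CM extension $F_k/\Q$ at the odd integer $n=1$, and transfer the resulting annihilation from $K_2^{et}(\mathcal{O}_{F_k}[1/l])$ down to $K_2(\mathcal{O}_{F_k})_l$ via Tate's Theorem \ref{Tate's Theorem}. The main obstacle will be to propagate this annihilation along the entire Iwasawa tower using only the finite, base-level finiteness condition $(b, |K_2(\mathcal{O}_F)_l|) = 1$, since a priori the order of $K_2(\mathcal{O}_{F_k})_l$ may grow with $k$. I would handle this by exploiting the norm-compatibility of the Stickelberger elements (Lemma \ref{Lemma 2.1} together with \eqref{RestofStickinIwasawaTower}) and the transfer maps $Tr_{F_k/F}$ on $K_2$, reducing the obstruction at level $k$ to the base field $F$, where the hypothesis on $b$ applies directly. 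Once the hypothesis of Theorem \ref{Theorem 5.4} has been verified for every $k$, the annihilation of $div\, K_{2n}(F)_l$ by $\Theta_n({\bf b}, {\bf f})$ follows at once.
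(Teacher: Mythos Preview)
Your overall strategy matches the paper's exactly: cite Coates--Sinnott to verify the hypothesis of Theorem \ref{Theorem 5.4} at every level $F_k$, then apply that theorem. The paper's proof is a two-line reference to [CS] and Theorem \ref{Theorem 5.4}.

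The issue is the ``main obstacle'' you introduce in your third paragraph: it does not exist, and your proposed workaround for it is both unnecessary and unsound. Once you have invoked the \'etale form of the Coates--Sinnott theorem at $n=1$ (odd) for each $F_k/\Q$, you already have that $\Theta_1(b,f_k)$ annihilates $K_2^{et}(\mathcal{O}_{F_k}[1/l])$ for every $k$, with \emph{no} coprimality condition on $b$ relative to $|K_2(\mathcal{O}_{F_k})|$; that condition appears only in the $K$-theoretic form of their theorem. Tate's theorem then transfers this to $K_2(\mathcal{O}_{F_k})_l$, and the hypothesis of Theorem \ref{Theorem 5.4} is verified for every $k$ at once. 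There is nothing to ``propagate along the tower.'' Your subsequent plan to reduce to the base field via $Tr_{F_k/F}$ would not work in any case, since transfer on $K_2$ is not injective, so annihilation at level $F$ does not pull back to annihilation at level $F_k$.

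In short: stop after the \'etale Coates--Sinnott plus Tate step; the rest is superfluous. The coprimality of $b$ with $w_{n+1}(\Q(\mu_{lf}))$ and $|K_2(\mathcal{O}_F)_l|$ is there for integrality and well-definedness reasons, not because you need it to push annihilation up the Iwasawa tower.
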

\begin{proof} Coates and Sinnott [CS] proved that $\Theta_{1} (b, f_k)$ annihilates
$K_{2} ({\mathcal O}_{F_k})$ for all $k \geq 1.$ Hence the theorem follows by
Theorem \ref{Theorem 5.4}.
\end{proof}

\begin{remark}\label{Remark 5.7}
Observe that Theorem \ref{Theorem 5.6} strengthens [Ba1, Cor. 1, p. 340]
in the case $l \, | \,n.$
\end{remark}

A much more general consequence of Theorem 5.4 above is the following.

\begin{theorem}\label{Theorem 5.8}
Let $F/K$ be an abelian CM extension of an arbitrary totally real number field $K$
and let $l$ be an odd prime. If the Iwasawa $\mu$--invariant $\mu_{F,l}$ associated
to $F$ and $l$ vanishes, then $\Theta_{n} (\bf b, \bf f)$
annihilates the group $div (K_{2n} (F)_l)$ for all $n \geq 1.$
\end{theorem}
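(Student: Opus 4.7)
The plan is to combine the main result of this paper (Theorem \ref{Theorem 5.4}) with the Greither--Popescu annihilation theorem from [GP] and Tate's classical theorem (Theorem \ref{Tate's Theorem}). Once the hypothesis of Theorem \ref{Theorem 5.4} is verified at every level $F_k$ of the cyclotomic tower, the conclusion is immediate; the entire task thus reduces to propagating the $\mu$-invariant hypothesis up the tower and translating the \'etale annihilation statement into a Quillen $K$-theoretic one.

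First, I would observe that the vanishing of $\mu_{F,l}$ propagates up the cyclotomic tower: each field $F_k := F(\mu_{l^k})$ sits inside the cyclotomic $\Z_l$-extension of $F(\mu_l)$ as a finite subextension, so by the standard behavior of Iwasawa $\mu$-invariants in finite extensions contained in a cyclotomic $\Z_l$-tower, the hypothesis $\mu_{F,l}=0$ forces $\mu_{F_k,l}=0$ for every $k \geq 0$. Consequently, the Greither--Popescu annihilation theorem [GP] applies to each $F_k$, and in its strongest form yields that $\Theta_{n}(\mathbf{b}, \mathbf{f}_k)$ annihilates the \'etale $K$-group $K_{2n}^{et}(\mathcal{O}_{F_k}[1/l])$ for all odd $n \geq 1$, for every $k \geq 0$.

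Next, I would specialize to $n = 1$ and invoke Tate's theorem (Theorem \ref{Tate's Theorem}), which provides, for each $k$, a canonical $G(F_k/K)$-equivariant isomorphism
\begin{equation}
K_{2}(\mathcal{O}_{F_k}) \otimes \Z_l \,\,\stackrel{\cong}{\longrightarrow}\,\, K_{2}^{et}(\mathcal{O}_{F_k}[1/l]).\nonumber
\end{equation}
Under this isomorphism, the annihilation of $K_{2}^{et}(\mathcal{O}_{F_k}[1/l])$ by $\Theta_{1}(\mathbf{b}, \mathbf{f}_k)$ transfers to annihilation of $K_{2}(\mathcal{O}_{F_k})_l$ by the same element, for all $k \geq 1$. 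This is precisely the standing hypothesis assumed at the beginning of \S4 of this paper and in Theorem \ref{Theorem 5.4}.

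Finally, applying Theorem \ref{Theorem 5.4} to the extension $F/K$ at the odd prime $l$ yields at once that $\Theta_{n}(\mathbf{b}, \mathbf{f})$ annihilates $div(K_{2n}(F)_l)$ for every $n \geq 1$, completing the argument. The only non-routine step in this plan is the Greither--Popescu input at the first level; every subsequent step is a purely formal combination of results already set up in the paper, so I would expect no genuine obstacle beyond invoking [GP] correctly with the right conductor $\mathbf{f}_k$ at each level of the tower.
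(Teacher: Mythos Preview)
Your proposal is correct and follows essentially the same route as the paper's own proof: propagate $\mu_{F,l}=0$ to $\mu_{F_k,l}=0$, invoke [GP] at each level to annihilate $K_2^{et}({\mathcal O}_{F_k}[1/l])$ by $\Theta_1({\bf b},{\bf f}_k)$, transfer this to $K_2({\mathcal O}_{F_k})_l$ via Tate's Theorem~\ref{Tate's Theorem}, and then apply Theorem~\ref{Theorem 5.4}. The only cosmetic difference is the order in which you state the $\mu$-propagation and the [GP] input.
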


\begin{proof} In [GP], it is shown that if $\mu_{F, l}=0$, then  $\Theta_{n} (\bf b, \bf f)$
annihilates $K_{2n}^{et}(O_F[1/l])$, for all odd $n$. From the definition of Iwasawa's $\mu$--invariant
one concludes right away that if $\mu_{F, l}=0$, then $\mu_{F_k, l}=0$, for all $k$. Consequently,
$\Theta_{1} (\bf b, \bf f_k)$ annihilates $K_{2}^{et}(O_{F_k}[1/l])$, for all $k$. Now, one applies Tate's Theorem
\ref{Tate's Theorem} to conclude that  $\Theta_{1} (\bf b, \bf f)$ annihilates $K_{2}(O_{F_k})_l$, for all $k$.
Theorem \ref{Theorem 5.4} implies the desired result.
\end{proof}

\begin{remark} Note that Theorem 5.6 above is indeed a particular case of Theorem 5.8, as $\mu_{F, l}=0$ for all abelian extensions
$F/\Bbb Q$ and all primes $l$ (according to a classical theorem of Ferrero-Washington and Sinnott.) It is a classical conjecture of Iwasawa
that $\mu_{F, l}=0$ for all number fields $F$ and all primes $l$.
\end{remark}

\noindent
\section{Construction of the map $\Lambda^{et}$}

Since Quillen K-theory and \' etale K-theory of rings of integers
and number fields enjoy many similar properties, we can construct
the Stickelberger splitting map $\Lambda^{et}$ in the setting of
\' etale K-theory as well. This section consists of a brief
description of the key steps of the construction of
$\Lambda^{et}.$ If $R$ is either a number field $L$ or its ring of
$l$--integers ${\mathcal O}_{L, S}[1/l]$,  Tate proved in
\cite{Ta2} that there is a natural isomorphism:
$$K_{2} (R)_l \,
{\stackrel{\cong}{\longrightarrow}} \, K_{2}^{et} (R).$$ Dwyer and
Friedlander \cite{DF} proved that there are natural isomorphisms
$$K_{2} (R; \Z/l^k ) \,
{\stackrel{\cong}{\longrightarrow}} \, K_{2}^{et} (R; \Z/l^k),$$
for all $k\geq 1$. As explained in \cite{Ba2}, for any number
field $L$ and any finite set $S \subset \text{Spec} ({\mathcal
O}_{L})$ we have the following commutative diagrams with exact
rows and (surjective) Dwyer-Friedlander maps as vertical arrows.

$$\xymatrix{
0  \ar[r]^{} & K_{2n} ({\mathcal O}_{L})_l
\ar@{>>}[d]^{} \ar[r]^{}  & K_{2n} ({\mathcal O}_{L, S})_l
\ar@{>>}[d]^{} \ar[r]^{}  & \bigoplus_{v \in S} \, \,
K_{2n-1} (k_v)_l \ar@<0.1ex>[d]^{\cong} \ar[r]^{} & 0 \\
0  \ar[r]^{} & K_{2n}^{et} ({\mathcal O}_{L}[1/l])  \ar[r]^{}  &
K_{2n}^{et} ({\mathcal O}_{L, S}[1/l]) \ar[r]^{}  & \bigoplus_{v
\in S} \, \, K_{2n-1}^{et} (k_v) \ar[r]^{} & 0}
\label{CommDiagrQuilEtale1}$$ For $n = 1$, the left and the middle
vertical arrows in the above diagram are also isomorphisms,
according to Tate's theorem.
\medskip

We assumed throughout this paper that $\Theta_{1} ({\bf b}, {\bf
f}_k)$ annihilates $K_{2} ({\mathcal O}_{F_{l^k}})$ for all $k
\geq 0.$ Hence $\Theta_{1} ({\bf b}, {\bf f}_k)$ annihilates
$K_{2}^{et} ({\mathcal O}_{F_{l^k}}[1/l])$ for all $k \geq 0.$
Recall the construction of $\Lambda_1$ just before Lemma
\ref{Lemma 4.1}. In the diagram above, let $y_{w, k}$ and
$\zeta_{w, k}$ denote the images of  $x_{w, k}$ and $\xi_{w, k}$
via the middle vertical and right vertical arrows, respectively.
Then, we define
$$\Lambda_{1}^{et} (\zeta_{w, k}) \, := \, y_{w, k}^{\Theta_{1} ({\bf b}, {\bf f}_k)}.$$
Clearly, the following diagram is commutative.
$$\xymatrix{
K_{2} ({\mathcal O}_{F_{l^k}, S})_l
\ar@<0.1ex>[d]^{\cong}\,   &  \ar[l]_{\Lambda_1} \bigoplus_{v \in S} \bigoplus_{w | v}
\, K_{1} (k_w)_l \ar@<0.1ex>[d]^{\cong} \\
K_{2}^{et} ({\mathcal O}_{F_{l^k}, S}[1/l]) &
\ar[l]_{\Lambda_{1}^{et}}  \bigoplus_{v \in S} \bigoplus_{w | v}
\, K_{1}^{et} (k_w)_l} \label{CommDiagrQuilEtale3}$$
We define
elements $\Lambda^{et} (\xi_{v, k}; l^k) \in K_{2n}^{et}
({\mathcal O}_{F, S};\, \Z/l^k))$ as follows:
$$\Lambda^{et} (\zeta_{v, k}; l^k) := \Lambda_{n}^{et} (\zeta_{v, k}; l^k) :=
Tr_{E/F} ( y_{w, k}^{\Theta_{1} ({\bf b}, {\bf f}_k) } \ast
\beta_{k}^{\ast \, n-1})^{N {\bf b}^{n-1}}.$$ Obviously,
$\Lambda^{et} (\zeta_{v, k}; l^k)$ is the image of $\Lambda
(\xi_{v, k}; l^k)$ via the Dwyer-Friedlander map. Now analogs of
Lemmas 4.1, 4.2 and 4.3 and Propositions 4.4 and 4.5 hold for the
\'etale case with  $\Lambda$ replaced by $\Lambda^{et},$ $\xi_{v,
k}$ replaced by $\zeta_{v, k},$ and $x_{w, k}$ replaced by $y_{w,
k}$ etc. Observe that the result of Gillet \cite{Gi} for K-theory
discussed in \S3 is replaced by the compatibility of the
Dwyer-Friedlander spectral sequence with the product structure
(\cite{DF} Proposition 5.4) and by Soul{\' e}'s observation (see
\cite{So1} p. 275) that the localization sequence in \' etale
cohomology (see \cite{So1} p. 268) is compatible with the product
by \' etale cohomology of ${\mathcal O}_{F, S}.$ Eventually, these
observations allow us to construct the map
$$\Lambda^{et} \, : \, \bigoplus_{v} K_{2n-1}^{et} (k_v)
\, \rightarrow K_{2n}^{et} (F)_l$$ which is the \'etale analogue
of our map $\Lambda $ from \S\S4-5. Naturally, by construction,
the following diagram commutates.
$$\xymatrix{
K_{2n}(F)_l
\ar@<0.1ex>[d]^{}\,   &  \ar[l]_{\Lambda} \bigoplus_{v}
\, K_{2n-1} (k_v)_l \ar@<0.1ex>[d]^{\cong} \\
K_{2n}^{et} (F)_l &  \ar[l]_{\Lambda^{et}}  \bigoplus_{v} \,
K_{2n-1}^{et} (k_v)_l} \label{CommDiagrQuilEtale4}$$ Moreover, the
discussion above shows that we have the following \' etale
analogue of Theorem \ref{Theorem 5.1}.
\begin{theorem}\label{Theorem 6.1}
The map $\Lambda^{et}$ satisfies the following property.
$$\partial_{F}^{et} \circ \Lambda^{et} (\zeta_{v}) =
\zeta_{v}^{\Theta_{n} ({\bf b}, {\bf f})}
$$
\end{theorem}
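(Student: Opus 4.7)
The plan is to mirror the proof of Theorem \ref{Theorem 5.1} in the \'etale setting, using the \'etale analogues of Propositions \ref{Proposition 4.4} and \ref{Proposition 4.5} that the authors have already indicated hold. The key input is the commutative square relating the Quillen and \'etale constructions via the Dwyer--Friedlander map, together with the fact that in the \'etale world the Bockstein-type inverse-limit isomorphisms
\[
K_{m}^{et}({\mathcal O}_{F,S}[1/l]) \cong \varprojlim_{k}\, K_{m}^{et}({\mathcal O}_{F,S}[1/l];\,\Z/l^k),\qquad
K_{m}^{et}(k_v) \cong \varprojlim_{k}\, K_{m}^{et}(k_v;\,\Z/l^k)
\]
are again available (they follow from the Dwyer--Friedlander spectral sequences once one notes that the relevant \'etale cohomology groups are finitely generated $\Z_l$-modules).

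First, I would establish the \'etale analogue of Proposition \ref{Proposition 4.4}: for each $k\geq k(v)$,
\[
\partial_{F}^{et}\bigl(\Lambda^{et}(\zeta_{v,k};l^k)\bigr) \;=\; \bigl(N(\zeta_{w,k}\ast\beta_{k}^{\ast\, n-1})\bigr)^{\Theta_{n}({\bf b},{\bf f})}.
\]
The proof is the verbatim translation of the proof of Proposition \ref{Proposition 4.4}, with three substitutions: (i) Gillet's compatibility of $\partial$ with the product is replaced by the compatibility of the Dwyer--Friedlander spectral sequence with products \cite{DF}, together with Soul\'e's observation that the \'etale localization sequence is compatible with cup-product by classes coming from ${\mathcal O}_{F,S}[1/l]$; (ii) the congruence $\Delta_{n+1}\equiv N{\bf a}^{n}N{\bf b}^{n}\Delta_{1}\mod w_{n}(K_{\bf f})$ of Deligne--Ribet is unchanged; (iii) the transfer/norm compatibility square used at the end carries over to the \'etale setting. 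The \'etale version of Proposition \ref{Proposition 4.5} is then constructed in exactly the same way: start from $\Lambda_{1}^{et}$, multiply by $\beta_{k}^{\ast\,n-1}$, compose with $Tr_{E/F}$, and factor through the $G(k_w/k_v)$-coinvariants using the surjectivity of $N_{w/v}$ on finite-field $K$-theory with coefficients.

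Next, I would pass to the limit. The commutative diagram
\[
\xymatrix{
K_{2n}^{et}({\mathcal O}_{F,S}[1/l])/l^k \ar[d] \ar[r]^{\bigoplus\partial_v^{et}} & \bigoplus_{v\in S} K_{2n-1}^{et}(k_v)/l^k \ar[d] \\
K_{2n}^{et}({\mathcal O}_{F,S}[1/l];\Z/l^k) \ar[r]^{\bigoplus\partial_v^{et}} & \bigoplus_{v\in S} K_{2n-1}^{et}(k_v;\Z/l^k)
}
\]
(the vertical maps being the \'etale Bockstein injections) together with the \'etale analogue of Lemma \ref{Lemma 4.3} show that the inverse limit over $k$ of the finite-coefficient boundary maps recovers $\partial_{F}^{et}$. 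By construction, $\Lambda^{et}(\zeta_v)$ is the element of $K_{2n}^{et}(F)_{l}$ corresponding to $(\Lambda^{et}(\zeta_{v,k};l^k))_k$ and $\zeta_v$ corresponds to $(N(\zeta_{w,k}\ast\beta_{k}^{\ast\, n-1}))_k$, so the identity at each finite level yields the desired identity in the limit.

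The main obstacle is not conceptual but bookkeeping: one must verify that the \'etale versions of all the compatibility diagrams (transfer versus boundary, Galois action versus reduction, coefficient reduction versus Bott elements, cup-product versus boundary) really do hold with the same formal properties used in the Quillen proof. Once the Dwyer--Friedlander/Soul\'e machinery supplies these compatibilities, the entire argument transcribes line-for-line, and the theorem follows from the \'etale analogue of Proposition \ref{Proposition 4.4} combined with the inverse-limit identification of $\partial_F^{et}$.
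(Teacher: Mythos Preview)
Your proposal is correct and follows essentially the same approach as the paper: the authors state that the \'etale analogues of Lemmas 4.1--4.3 and Propositions \ref{Proposition 4.4}--\ref{Proposition 4.5} hold with the same substitutions you identify (in particular, Gillet's compatibility replaced by the Dwyer--Friedlander product compatibility together with Soul\'e's observation on the \'etale localization sequence), and then Theorem \ref{Theorem 6.1} follows from the discussion exactly as Theorem \ref{Theorem 5.1} followed from Propositions \ref{Proposition 4.4} and \ref{Proposition 4.5}. Your write-up is simply a more explicit unpacking of what the paper leaves as ``the discussion above.''
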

\noindent The following is the \'etale analogue of Theorem 5.4.
\begin{theorem}\label{Theorem 6.2}
Assume that the Stickelberger elements $\Theta_{1} ({\bf b}, {\bf f}_k)$ annihilate
the groups $K_{2}^{et} ({\mathcal O}_{F_k})_l$ for all $k \geq 1.$
Then the Stickelberger element $\Theta_{n} ({\bf b}, {\bf f})$ annihilates the group
$div \, K_{2n}^{et} (F)_{l}$ for all $n \geq 1
.$
\end{theorem}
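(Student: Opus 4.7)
The plan is to mimic the proof of Theorem \ref{Theorem 5.4} verbatim, with the étale ingredients substituted for their Quillen counterparts. The structural inputs needed are the étale localization exact sequence (already displayed in \S6)
\[
0 \longrightarrow K_{2n}^{et}({\mathcal O}_F[1/l]) \longrightarrow K_{2n}^{et}({\mathcal O}_{F,S}[1/l]) \stackrel{\partial_F^{et}}{\longrightarrow} \bigoplus_{v \in S} K_{2n-1}^{et}(k_v) \longrightarrow 0,
\]
the existence of the étale splitting $\Lambda^{et}$, and the key property $\partial_F^{et} \circ \Lambda^{et}(\zeta_v) = \zeta_v^{\Theta_n({\bf b},{\bf f})}$ supplied by Theorem \ref{Theorem 6.1}. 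The Galois equivariance of $\partial_F^{et}$ is also used, and is inherited from the compatibility of localization in étale cohomology with the group action (Soulé).

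I would argue as follows. Let $d \in div\,K_{2n}^{et}(F)_l$. Since $K_{2n}^{et}({\mathcal O}_F[1/l])_l$ is finite (being built out of finite étale cohomology groups), there exists $m$ such that $l^m$ annihilates $K_{2n}^{et}({\mathcal O}_F[1/l])_l$; by the divisibility of $d$, we may also write $d = x^{l^m}$ for some $x \in K_{2n}^{et}(F)_l$. Consider the element
\[
z := \Lambda^{et}\bigl(\partial_F^{et}(x^{-1})\bigr)\, x^{\Theta_n({\bf b},{\bf f})} \in K_{2n}^{et}(F)_l.
\]
Applying $\partial_F^{et}$ and using both Theorem \ref{Theorem 6.1} and the Galois equivariance of $\partial_F^{et}$, we get
\[
\partial_F^{et}(z) = \partial_F^{et}(x)^{-\Theta_n({\bf b},{\bf f})}\, \partial_F^{et}(x)^{\Theta_n({\bf b},{\bf f})} = 1,
\]
so $z$ actually lies in $K_{2n}^{et}({\mathcal O}_F[1/l])_l$.

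Raising to the $l^m$-th power now kills $z$, and since $d = x^{l^m}$ satisfies $\partial_F^{et}(d) = 0$ (being in $div\,K_{2n}^{et}(F)_l \subset K_{2n}^{et}({\mathcal O}_F[1/l])_l$), we obtain
\[
1 = z^{l^m} = \Lambda^{et}\bigl(\partial_F^{et}(d^{-1})\bigr)\, d^{\Theta_n({\bf b},{\bf f})} = d^{\Theta_n({\bf b},{\bf f})},
\]
which is the required annihilation. The only delicate point I anticipate is verifying that $div\,K_{2n}^{et}(F)_l$ is contained in $K_{2n}^{et}({\mathcal O}_F[1/l])_l$ and that the latter has bounded exponent; both follow from the finiteness of the relevant étale cohomology groups and the localization sequence above, so no real obstacle should arise. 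The rest is a purely formal transcription of the Quillen argument from Theorem \ref{Theorem 5.4}.
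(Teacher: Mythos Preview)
Your proposal is correct and follows essentially the same approach as the paper: both simply transcribe the proof of Theorem~\ref{Theorem 5.4} with $\Lambda^{et}$, $\partial_F^{et}$, and Theorem~\ref{Theorem 6.1} substituted for their Quillen counterparts. The paper additionally remarks on a shortcut you did not mention: by \cite{Ba2}, Theorem~3(i), the Dwyer--Friedlander map induces an isomorphism $div\,K_{2n}(F)_l \cong div\,K_{2n}^{et}(F)_l$, so (using Tate's theorem to identify the hypotheses) Theorem~\ref{Theorem 6.2} also follows directly from Theorem~\ref{Theorem 5.4} without rerunning the argument.
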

\begin{proof} The proof is identical to that of Theorem
\ref{Theorem 5.4} with $\Lambda$ replaced by $\Lambda^{et}.$ One
also observes that this theorem follows more directly from
Theorem \ref{Theorem 5.4} since by \cite{Ba2} Theorem 3(i) we know that $div \, K_{2n}
(F)_{l}$ is isomorphic to $div \, K_{2n}^{et} (F)_{l}$ via the
Dwyer-Friedlander map $K_{2n} (F)_l \rightarrow K_{2n}^{et}
(F)_l.$
\end{proof}

\begin{remark}\label{Remark 6.3}
Based on Theorems \ref{Theorem 6.1} and \ref{Theorem 6.2}, we can easily establish
\'etale versions of Remark \ref{second proof of annihilation of div},
Corollary \ref{Corollary 5.5} and Theorems \ref{Theorem 5.6} and  \ref{Theorem 5.8}.
\end{remark}

\noindent
\section{Construction of $\Lambda$ and $\Lambda^{et}$ revisited}

In this section we will generalize our approach used in \S4 and construct the map
$\Lambda^{\prime} := \Lambda_{n}^{\prime} $ for $K_{n}$, under the
assumption that for some fixed $m > 0$ the Stickelberger element
$\Theta_{m} ({\bf b}, {\bf f}_k)$ annihilates
$K_{2m} ({\mathcal O}_{F_{l^k}})$  for all $k \geq 0.$
Since the construction is similar to the those in
\S4, we will only sketch the proofs of these results.
\medskip

\noindent
Let $L$ be a number field, such that $\mu_{l^k} \subset {\mathcal O}_{L, S}.$
Let $i \in \N$ and let $m \in \Z.$
Then, for $R = L$ or $R = {\mathcal O}_{L, S}$ there is a natural group isomorphism
\cite{DF} Theorem 5.6:
\begin{equation}
K_{i}^{et} (R; \Z/l^k)
\, {\stackrel{\cong}{\longrightarrow}} \,
K_{i + 2 m}^{et} (R; \Z/l^k)
\label{etalebottmult}\end{equation}
which sends $\eta$ to
$\eta \ast \beta_{k}^{\ast \,  m}$ for any
$\eta \in K_{i}^{et} (R; \Z/l^k).$
If $m \geq 0$ this isomorphism is just the multiplication by
$\beta_{k}^{\ast \, m}.$
If $m < 0$ and $i + 2m > 0$, then the isomorphism (\ref{etalebottmult}) is
the inverse to the multiplication by $\beta_{k}^{\ast \, - m}$ isomorphism:
\begin{equation}
\ast \,\, \beta_{k}^{\ast \, - m}\,  : \,
K_{i + 2m}^{et}  (R; \Z/l^k) \,\, {\stackrel{\cong}{\longrightarrow}} \,\,
K_{i}^{et} (R; \Z/l^k).
\label{etalebottmult1}\end{equation}
\medskip

\noindent
Now, let us consider Quillen K-theory.
If $m \geq 0$, there is a natural homomorphism
\begin{equation}
\ast \, \, \beta^{\ast \, m} \, : \: K_{i}  (R; \Z/l^k)
\rightarrow K_{i + 2 m} (R; \Z/l^k)
\label{ktheorybottmult0}\end{equation}
which is just a multiplication by  $\beta_{k}^{\ast \, m}.$
The homomorphism (\ref{ktheorybottmult0}) is compatible with the isomorphism
(\ref{etalebottmult}) via the Dwyer-Friedlander map.
If $m < 0$ and $i + 2m > 0$,  then take the homomorphism
\begin{equation}
t(m) \, : \: K_{i}  (R; \Z/l^k)
\rightarrow K_{i + 2 m} (R; \Z/l^k)
\label{ktheorybottmult00}\end{equation}
to be the the composition
of the left vertical, bottom horizontal and right vertical arrows of the following diagram.
$$\xymatrix{
K_{i}  (R; \Z/l^k)
\ar@<0.1ex>[d]^{}\,  \quad \ar[r]^{t (m)}  &  \quad K_{i + 2 m}  (R; \Z/l^k)   \\
K_{i }^{et}  (R; \Z/l^k) \quad
\ar[r]^{(\ast \, \beta_{k}^{\ast \, - m})^{-1}}
&  \quad K_{i + 2 m}^{et}  (R; \Z/l^k)  \ar@<0.1ex>[u]^{} }
\label{ktheorybottmult}$$
The left vertical arrow is the Dwyer-Friedlander map. The right vertical arrow is
the Dwyer-Friedlander splitting \cite{DF}, Proposition 8.4.
The Dwyer-Friedlander splitting map is obtained as the multiplication of the
inverse to the isomorphism
$K_{i^{\prime}}  (R; \Z/l^k) {\stackrel{\cong}{\longrightarrow}}
K_{i^{\prime}}^{et}  (R; \Z/l^k),$
for $i^{\prime} = 1 $ or $i^{\prime} = 2,$ by a nonnegative power of the Bott element
$\beta_{k}^{\ast \, m^{\prime}},$ with $m^{\prime} \geq 0$
(see the proof of \cite{DF}, Proposition 8.4.)

\begin{remark} \label{Remark 7.1 on coeff change oF DF splitting}
It is clear that the Dwyer-Friedlander splitting
from \cite{DF}, Proposition 8.4 is compatible with the maps
$\Z/l^{j} \rightarrow \Z/l^{j-1}$ at the level of coefficients, for all $1 \leq j \leq k.$
Consequently, the map $t(m)$ is naturally compatible with these maps.
In addition, $t(m)$ is naturally compatible with the ring imbedding
$R \rightarrow R^{\prime}$, where $R^{\prime} = L^{\prime}$ or
$R^{\prime} = {\mathcal O}_{L^{\prime}, S}$ for a number field extension
$L^{\prime}/L.$ Let
$$t^{et} (m) := (\ast \, \beta_{k}^{\ast \, - m})^{-1}.$$
It is clear from the above diagram that $t(m)$ and $t^{et} (m)$ are
naturally compatible with the Dwyer-Friedlander maps.
\end{remark}

\begin{lemma}\label{Lemma 7.2}
Let $L = F (\mu_{l^k})$ and let $i > 0$ and $m < 0$, such that $i + 2 m > 0.$
Then, for $R = L$ or $R = {\mathcal O}_{L, S}$, the natural group homomorphisms
$t^{et} (m)$  and $t(m)$ have the following properties:
\begin{equation}
t^{et} (m) (\alpha)^{\sigma_{{\bf a}}} =
t^{et} (m) (\alpha^{N  {\bf a}^m \sigma_{{\bf a}}})
\label{ktheorybottmult1}\end{equation}
\begin{equation}
t(m) (\alpha)^{\sigma_{{\bf a}}} =
t(m) (\alpha^{N  {\bf a}^m \sigma_{{\bf a}}})
\label{ktheorybottmult1second}\end{equation}
for $\alpha \in K_{i}^{et} (R; \Z/l^k)$ (resp.
$\alpha \in K_{i} (R; \Z/l^k)$).
\end{lemma}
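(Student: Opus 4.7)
The plan is to deduce the Quillen $K$-theory identity \ref{ktheorybottmult1second} from its \'etale counterpart \ref{ktheorybottmult1}, and to base both on a direct computation of the Galois twist of the Bott element. First I would record that
\[
\beta_k^{\sigma_{\mathbf{a}}} \;=\; N\mathbf{a}\cdot \beta_k
\]
in both $K_2(R;\Z/l^k)$ and $K_2^{et}(R;\Z/l^k)$. This follows from the construction of $\beta_k$ in \S3.1 as the image of a fixed generator $\xi_{l^k}\in\mu_{l^k}$ under a natural, and hence Galois equivariant, sequence of maps, combined with the fact that $\sigma_{\mathbf{a}}$ acts on $\mu_{l^k}$ as the $N\mathbf{a}$-th power map. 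Since the product $\ast$ is Galois equivariant and $\Z/l^k$-bilinear, this immediately yields $(\beta_k^{\ast(-m)})^{\sigma_{\mathbf{a}}} = N\mathbf{a}^{-m}\cdot\beta_k^{\ast(-m)}$ in both settings.

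For the \'etale case, I would exploit that $t^{et}(m)(\alpha)$ is by definition the unique element of $K_{i+2m}^{et}(R;\Z/l^k)$ with $t^{et}(m)(\alpha)\ast\beta_k^{\ast(-m)}=\alpha$. Applying $\sigma_{\mathbf{a}}$ to both sides and invoking the Bott-twist formula together with Galois equivariance of $\ast$ gives
\[
t^{et}(m)(\alpha)^{\sigma_{\mathbf{a}}}\ast\beta_k^{\ast(-m)} \;=\; N\mathbf{a}^{m}\cdot\alpha^{\sigma_{\mathbf{a}}} \;=\; \alpha^{N\mathbf{a}^{m}\sigma_{\mathbf{a}}}.
\]
Since $\ast\beta_k^{\ast(-m)}$ is an isomorphism, the defining property of $t^{et}(m)$ applied to $\alpha^{N\mathbf{a}^{m}\sigma_{\mathbf{a}}}$ identifies the left-hand side with $t^{et}(m)(\alpha^{N\mathbf{a}^{m}\sigma_{\mathbf{a}}})$, as required.

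For the Quillen case, I would use the defining diagram $t(m)=s\circ t^{et}(m)\circ\phi$, where $\phi$ is the Dwyer--Friedlander map and $s$ the Dwyer--Friedlander splitting. The map $\phi$ is Galois equivariant by naturality in $R$. For $s$, the presentation recalled in Remark \ref{Remark 7.1 on coeff change oF DF splitting} has the form $s(\eta) = \phi_{i'}^{-1}\bigl((\ast\beta_k^{\ast m''})^{-1}(\eta)\bigr) \ast \beta_k^{\ast m''}$ for appropriate $i'\in\{1,2\}$ and $m''\geq 0$. Upon applying $\sigma_{\mathbf{a}}$, the twist $N\mathbf{a}^{m''}$ introduced by the final Bott multiplication exactly cancels the twist $N\mathbf{a}^{-m''}$ coming from the inverse \'etale Bott isomorphism, while $\phi_{i'}^{-1}$ is Galois equivariant by naturality. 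Hence $s$ is Galois equivariant, and composing the three maps propagates the \'etale twist unchanged to the Quillen side, yielding \ref{ktheorybottmult1second}.

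The main obstacle is the bookkeeping in the last step: one must verify carefully that the Bott-induced twists in the two halves of the Dwyer--Friedlander splitting cancel cleanly. Once this is in hand, the rest of the argument is a formal manipulation of the defining identities.
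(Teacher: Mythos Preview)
Your argument is correct and is precisely what the paper intends: its proof of Lemmas~\ref{Lemma 7.2} and~\ref{Lemma 7.3} consists of the single sentence ``The properties \ldots\ follow directly from the definition of the maps $t^{et}(m)$ and $t(m)$,'' and you have simply unpacked that assertion. Your \'etale computation via the defining identity $t^{et}(m)(\alpha)\ast\beta_k^{\ast(-m)}=\alpha$ and the Bott-twist formula is exactly the content of ``follows from the definition,'' and your verification that the Dwyer--Friedlander splitting is Galois equivariant (the $N\mathbf a^{m''}$ and $N\mathbf a^{-m''}$ twists cancelling) is the honest work behind the Quillen case that the paper leaves implicit. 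One small point worth making explicit is that $N\mathbf a$ is a unit in $\Z/l^k$ because $\mathbf a$ is coprime to $\mathbf f_k$ and $l\mid\mathbf f_k$; this is what legitimizes writing $N\mathbf a^{m}$ for $m<0$.
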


\begin{lemma}\label{Lemma 7.3}
If $i \in \{1, 2\},$ $\alpha \in K_{i} (R; \Z/l^k)$ and $n + m > 0$
then
\begin{equation}
t^{et} (m) (\alpha \ast \beta_{k}^{\ast \, n }) =
\alpha \ast \beta_{k}^{\ast \, n + m }.
\label{etalenegativebottmult2}\end{equation}
\begin{equation}
t (m) (\alpha \ast \beta_{k}^{\ast \, n }) =
\alpha \ast \beta_{k}^{\ast \, n + m }.
\label{ktheorynegativebottmult2}\end{equation}
\end{lemma}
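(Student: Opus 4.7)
The plan is to verify both identities by unwinding the definitions of $t^{et}(m)$ and $t(m)$ given just before the lemma. When $m \geq 0$, both maps are by definition multiplication with $\beta_k^{\ast \, m}$ (see (\ref{etalebottmult}) and (\ref{ktheorybottmult0})), so both identities reduce to the graded-ring identity $\beta_k^{\ast \, n} \ast \beta_k^{\ast \, m} = \beta_k^{\ast \, n+m}$ in $K_\ast^{et}(R;\Z/l^k)$ (resp.\ $K_\ast(R;\Z/l^k)$). Hence the substantive case is $m < 0$.

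For the \'etale identity (\ref{etalenegativebottmult2}) with $m < 0$, by definition $t^{et}(m) = (\ast\,\beta_k^{\ast \, -m})^{-1}$, so the claim is equivalent, after applying the isomorphism $\ast\,\beta_k^{\ast \, -m}$ to both sides, to the identity $\alpha \ast \beta_k^{\ast \, n+m} \ast \beta_k^{\ast \, -m} = \alpha \ast \beta_k^{\ast \, n}$, which is again the Bott associativity rule. The hypothesis $n+m > 0$ together with $i \geq 1$ guarantees that the intermediate target index $i + 2(n+m)$ is positive, so that (\ref{etalebottmult1}) is indeed an isomorphism in this range.

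For the Quillen identity (\ref{ktheorynegativebottmult2}), I would feed $\alpha \ast \beta_k^{\ast \, n}$ through the three-step composition defining $t(m)$. The first step, the Dwyer--Friedlander map, is a ring homomorphism sending $\beta_k$ to $\beta_k$, and so produces $\mathrm{DF}(\alpha) \ast \beta_k^{\ast \, n}$. The second step, by the \'etale case just treated, yields $\mathrm{DF}(\alpha) \ast \beta_k^{\ast \, n+m}$. The third step is where the hypotheses $i \in \{1,2\}$ and $n+m > 0$ enter decisively: the element $\mathrm{DF}(\alpha) \ast \beta_k^{\ast \, n+m}$ is precisely in the canonical form used to describe the Dwyer--Friedlander splitting in Remark \ref{Remark 7.1 on coeff change oF DF splitting}, namely an $i'$-dimensional \'etale class with $i' \in \{1,2\}$ multiplied by a nonnegative Bott power $\beta_k^{\ast \, m'}$ with $m' = n+m \geq 1$. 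Hence the splitting returns $\alpha \ast \beta_k^{\ast \, n+m}$, which finishes the proof.

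The only subtle point, which is concentrated entirely in the last step, is the compatibility of the splitting of \cite{DF}, Proposition~8.4 with Bott multiplication; but this is essentially built into its construction, since the section is defined by first inverting the Tate/Dwyer--Friedlander isomorphism in dimensions $i' \in \{1,2\}$ and then multiplying back by the appropriate nonnegative Bott power. Thus no genuine obstacle arises, and the proof is just a careful bookkeeping exercise using the definitions and Remark \ref{Remark 7.1 on coeff change oF DF splitting}.
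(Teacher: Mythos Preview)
Your proposal is correct and takes essentially the same approach as the paper: the paper's proof is a single sentence stating that Lemmas \ref{Lemma 7.2} and \ref{Lemma 7.3} ``follow directly from the definition of the maps $t^{et}(m)$ and $t(m)$,'' and you have simply written out those definitional verifications in detail. Your careful handling of the third step (the Dwyer--Friedlander splitting) is exactly the unpacking of the description given in the paragraph preceding Remark \ref{Remark 7.1 on coeff change oF DF splitting}.
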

\begin{proof} The properties in Lemmas \ref{Lemma 7.2} and \ref{Lemma 7.3}
follow directly from the definition of the
maps $t^{et} (m)$ and $t(m).$
\end{proof}

If $v$ is a prime of ${\mathcal O}_{L, S},$
$m < 0$ and $i + 2m > 0$, then we construct the morphism
\begin{equation}
t_v (m) \, : \: K_{i}  (k_v ; \Z/l^k)
\rightarrow K_{i + 2 m} (k_v ; \Z/l^k)
\label{ktheorybottmult00second}\end{equation}
in the same way as we have done for ${\mathcal O}_{L, S}$ or $L.$
Namely, $t_v (m)$ is the composition
of the left vertical, bottom horizontal and right vertical arrows in the following diagram.
$$\xymatrix{
K_{i}  (k_v ; \Z/l^k)
\ar@<0.1ex>[d]^{\cong}\,  \quad \ar[r]^{t_v (m)}  &  \quad K_{i + 2 m}  (k_v ; \Z/l^k)   \\
K_{i}^{et}  (k_v; \Z/l^k) \quad
\ar[r]^{(\ast \, \beta_{k}^{\ast \, - m})^{-1}}
&  \quad K_{i + 2 m}^{et}  (k_v; \Z/l^k)  \ar@<0.1ex>[u]^{\cong} }
\label{ktheorybottmult3}$$
The right vertical arrow is
the inverse of the Dwyer-Friedlander map which, in the case of a finite field,
is clearly seen to be equal to the Dwyer-Friedlander splitting map.

Similarly to $t^{et} (m)$ we can construct $t_{v}^{et} (m).$
We observe that the maps $t (m)$ and $t_v (m)$ are compatible with the reduction
maps and the boundary maps. In other words, we have the following commutative diagrams.

$$\xymatrix{
K_{i}  ({\mathcal O}_{L, S} ; \, \Z/l^k)
\ar@<0.1ex>[d]^{t (m)}\,  \quad \ar[r]^{r_v}  &  \quad K_{i }  (k_v; \, \Z/l^k)
 \ar@<0.1ex>[d]^{t_v (m)}  \\
K_{i + 2 m}  ({\mathcal O}_{L, S} ; \, \Z/l^k) \quad
\ar[r]^{r_v}
&  \quad K_{i + 2 m}  (k_v; \, \Z/l^k)}
\label{ktheorybottmult4}$$
\medskip

\noindent
$$\xymatrix{
 K_{i} ({\mathcal O}_{L, S}, \, \Z/l^k)
\ar@{>}[d]^{t (m)} \ar[r]^{\partial}  & \bigoplus_{v \in S} \, \,
K_{i-1} (k_v; \, \Z/l^k) \ar@<0.1ex>[d]^{t_v (m)} \\
K_{i + 2m} ({\mathcal O}_{L, S}[1/l]; \, \Z/l^k) \ar[r]^{\partial}  & \bigoplus_{v
\in S} \, \, K_{i - 1 + 2m} (k_v; \, \Z/l^k) }
\label{CommDiagrQuilEtale1second}$$
\medskip
\noindent
Let us point out that there are similar diagrams to the two diagrams above
for \'etale K-theory and the maps $t^{et} (m)$ and $t_{v}^{et} (m).$

\medskip
As observed in the discussion above, the map $t(m)$ for $m < 0$ has
the same properties as the multiplication by $\beta^{\ast \, m}$
for $m \geq 0.$ So, for $m < 0$ we define the symbols $\alpha \ast
\beta^{\ast \, m} : = t (m) (\alpha)$
(resp. $\alpha_v \ast
\beta^{\ast \, m} : = t_v (m) (\alpha_v)$, for
$\alpha \in K_{i} ({\mathcal O}_{L}; \, \Z/l^k)$
(resp. $\alpha_v \in K_{i} (k_v; \Z/l^k)).$
For $m \geq 0$, the symbol $\alpha \ast \beta^{\ast \, m}$
(resp. $\alpha_v \ast \beta^{\ast \, m}$) denotes the usual thing.
\medskip

\medskip

Let $m  > 0$ be a natural number. Throughout the rest of this section
we assume that $\Theta_{m} ({\bf b}, {\bf f}_k)$ annihilates
$K_{2m} ({\mathcal O}_{F_{l^k}})$ for all $k \geq 0.$
As \S4, we let $w$ denote a prime of ${\mathcal O}_{F_{l^k}}$ over a prime $v$
of ${\mathcal O}_{F}$, such that $v \not\,\mid l.$ Put $E := F_{l^k}.$
For any finite set $S$ of primes in $O_F$ and any $k\geq 0$,
there is an exact sequence [Q].

$$
0 \stackrel{}{\longrightarrow} K_{2 m} ({\mathcal O}_{F_{l^k}})
\stackrel{}{\longrightarrow} K_{2 m} ({\mathcal O}_{F_{l^k}, \, S})
\stackrel{\partial}{\longrightarrow} \bigoplus_{v \in S} \bigoplus_{w | v}
K_{2 m -1} (k_w) \stackrel{}{\longrightarrow} 0
$$

Let $\xi_{w, k} \in K_{2 m - 1} (k_w)_l$ be a generator of the $l$-torsion part of
$K_{2 m -1} (k_w)$.
Pick an element $x_{w, k} \in K_{2 m} ({\mathcal O}_{F_{l^k}, S})_l$ such that
${\partial} (x_{w, k}) = \xi_{w, k}.$ Obviously,
$x_{w, k}^{\Theta_{m} ({\bf b}, {\bf f}_k)}$ does not depend on the choice of $x_{w, k}$
since $\Theta_{m} ({\bf b}, {\bf f}_k)$ annihilates
$K_{2 m} ({\mathcal O}_{F_{l^k}}).$ If $\text{ord}(\xi_{w, k}) = l^a$, then
$x_{w, k}^{l^a} \in K_{2 m} ({\mathcal O}_{F_{l^k}}).$ Hence,
$(x_{w, k}^{\Theta_{m} ({\bf b}, {\bf f}_k)})^{l^a} =
(x_{w, k}^{l^a})^{\Theta_{m} ({\bf b}, {\bf f}_k)} = 0.$
Consequently,  there is a well defined map:

$$\Lambda_{m}^{\prime}\, : \, \bigoplus_{v \in S} \bigoplus_{w | v} K_{2 m - 1} (k_w)_l
\stackrel{}{\longrightarrow} K_{2 m} ({\mathcal O}_{F_{l^k}, \, S})_l ,$$
$$\Lambda_{m}^{\prime} (\xi_{w, k}) \, := \, x_{w, k}^{\Theta_{m} ({\bf b}, {\bf f}_k)}.$$

\begin{lemma}\label{Lemma 7.4}
The map $\Lambda_{m}^{\prime}$ satisfies
the following property
$$\partial \Lambda_{m}^{\prime} (\xi_{w, k}) \, :=
\, \xi_{w, k}^{\Theta_{m} ({\bf b}, {\bf f}_k)}.$$
\end{lemma}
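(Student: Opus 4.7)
The plan is to mimic precisely the very short proof of Lemma \ref{Lemma 4.1}, since the statement is the direct analogue at the $m$-th level. The key point is that the boundary map $\partial$ in the Quillen localization sequence
$$0 \to K_{2m}({\mathcal O}_{F_{l^k}}) \to K_{2m}({\mathcal O}_{F_{l^k},S}) \stackrel{\partial}{\to} \bigoplus_{v\in S}\bigoplus_{w\mid v} K_{2m-1}(k_w) \to 0$$
is $G(E/F)$-equivariant, and more generally commutes with the action of the group ring $\Z[G(E/F)]$ on either side.

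First, unpack the definition: $\Lambda_m^{\prime}(\xi_{w,k}) := x_{w,k}^{\Theta_m(\mathbf{b},\mathbf{f}_k)}$, where $x_{w,k}\in K_{2m}({\mathcal O}_{F_{l^k},S})_l$ is any lift of $\xi_{w,k}$, so $\partial(x_{w,k}) = \xi_{w,k}$. Next, write $\Theta_m(\mathbf{b},\mathbf{f}_k) = \sum_{\mathbf{a}} c_{\mathbf{a}}\,\sigma_{\mathbf{a}}^{-1}$ as an element of $\Z[G(E/F)]$ (with $c_{\mathbf{a}}\in\Z$, via Deligne--Ribet), and apply $\partial$ termwise using Galois equivariance:
$$\partial\bigl(\Lambda_m^{\prime}(\xi_{w,k})\bigr) = \partial\Bigl(\prod_{\mathbf{a}} (x_{w,k}^{\sigma_{\mathbf{a}}^{-1}})^{c_{\mathbf{a}}}\Bigr) = \prod_{\mathbf{a}} \bigl(\partial(x_{w,k})^{\sigma_{\mathbf{a}}^{-1}}\bigr)^{c_{\mathbf{a}}} = \xi_{w,k}^{\Theta_m(\mathbf{b},\mathbf{f}_k)}.$$

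Finally, one should check that $\Lambda_m^{\prime}$ is well-defined, i.e.\ independent of the lift $x_{w,k}$; this is guaranteed by the standing hypothesis of \S7 that $\Theta_m(\mathbf{b},\mathbf{f}_k)$ annihilates $K_{2m}({\mathcal O}_{F_{l^k}})_l$, as noted in the construction preceding the lemma. I do not expect any real obstacle here --- the argument is formal and parallel to Lemma \ref{Lemma 4.1}, the only substantive new ingredient being the $m$-th Stickelberger annihilation hypothesis, which has already been invoked to make $\Lambda_m^{\prime}$ well-defined rather than in the verification of the boundary formula itself.
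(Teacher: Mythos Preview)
Your proof is correct and follows exactly the paper's own one-line argument: the paper simply says ``The lemma follows immediately by compatibility of $\partial$ with $G(E/F)$ action,'' and you have spelled this out explicitly by writing $\Theta_m(\mathbf{b},\mathbf{f}_k)$ as a $\Z$-linear combination of Galois automorphisms and pushing $\partial$ through term by term. (One could quibble that the relevant group is $G(E/K)$ rather than $G(E/F)$, since $\Theta_m(\mathbf{b},\mathbf{f}_k)\in\Z[G(F_{l^k}/K)]$, but the paper itself writes $G(E/F)$ here and the point is identical: $\partial$ commutes with the full Galois action.)
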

\begin{proof}
The lemma follows immediately by compatibility of $\partial$ with
$G (E/F)$ action.
\end{proof}

Let $v$ be a prime in ${\mathcal O}_{F}$ sitting above $p \not= l$
in $\Z$. Let $S := S_v$ be the finite set primes of ${\mathcal
O}_{F}$ consisting of all the primes over $p.$ Let us fix an
$n\in\N$. Let $k (v)$ be the natural number for which $l^{k(v)}\,
|| \, q_{v}^{n} - 1.$  For $k \geq k (v)$, let us define elements:

$$\Lambda_{n}^{\prime} (\xi_{v, k}; l^k) :=
Tr_{E/F} ( x_{w, k}^{\Theta_{m} ({\bf b}, {\bf f}_k) } \ast
\beta_{k}^{\ast \, n-m})^{N {\bf b}^{n-m}} \in K_{2n} ({\mathcal
O}_{F, S};\, \Z/l^k)).$$ As before, we will
write $\Lambda^{\prime} (\xi_{v, k}; l^k)$
instead of $\Lambda_{n}^{\prime} (\xi_{v, k}; l^k).$

Let us fix a prime sitting above $v$ in each of the fields $F(\mu_{l^k})$,
such that if $k\leq k'$ and $w$ and $w'$ are the fixed primes in $E =F(\mu_{l^k})$ and
$E':=F(\mu_{l^{k'}})$, respectively, then $w'$ sits above $w$. By the surjectivity of
the transfer maps for $K$-theory of finite fields (see the end of
\S3), we can associate to each $k$ and the chosen prime $w$ in
$E=F(\mu_{l^k})$ a generator $\xi_{w, k}$ of $K_{2m-1}(k_w)_l$, such that
$$N_{w^{\prime}/w}(\xi_{w^{\prime},
k^{\prime}})= \xi_{w, k},$$
for all $k\leq k'$, where $w$ and $w'$ are the fixed primes in $E =F(\mu_{l^k})$ and
$E' =F(\mu_{l^{k'}})$, respectively.

\begin{lemma}\label{Lemma 7.5} With notations as above, for every
$k \leq k^{\prime}$ we have
$$r_{k^{\prime}/k} ( N_{w^{\prime}/v}
(\xi_{w^{\prime}, k^{\prime}} \ast \beta_{k^{\prime}}^{\ast \, n-m})) =
N_{w/v} (\xi_{w, k} \ast \beta_{k}^{\ast \, n-m})$$
\end{lemma}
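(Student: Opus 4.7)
The plan is to mimic the proof of Lemma \ref{Lemma 4.2} essentially verbatim, using the extended meaning of $\,\ast\, \beta_{k}^{\ast\, n-m}$ introduced in this section. Let $c := n-m$. We separate two cases according to the sign of $c$, and combine at the end.

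First, if $c \geq 0$, then $\,\ast\, \beta_{k}^{\ast\, c}$ is literal multiplication by a power of the Bott element, and the same four ingredients used in Lemma \ref{Lemma 4.2} apply without modification: (i) the natural commutativity of the coefficient reduction $r_{k^{\prime}/k}$ with the transfer $N_{w^{\prime}/v}$ at the level of $K$-theory with coefficients; (ii) the compatibility of Bott elements under reduction, i.e.\ $r_{k^{\prime}/k}(\beta_{k^{\prime}}) = \beta_{k}$; (iii) the projection formula for $N_{w^{\prime}/w}$, which is valid since $\mu_{l^{k}} \subset k_{w}^{\times}$ and hence $\beta_{k}$ is pulled back from $K_{2}(k_{w}; \Z/l^{k})$; and (iv) the defining compatibility $N_{w^{\prime}/w}(\xi_{w^{\prime}, k^{\prime}}) = \xi_{w, k}$ of the chosen generators. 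Stringing these four equalities together exactly as on p.~15 gives the desired identity.

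Next, if $c < 0$, then by the convention introduced just above Lemma \ref{Lemma 7.4} we have $\alpha \,\ast\, \beta_{k}^{\ast\, c} := t_{w}(c)(\alpha)$ on finite-field $K$-theory, and analogously on $w^{\prime}$. The same chain of four steps will work, provided that one checks: (a) $r_{k^{\prime}/k}$ commutes with $t(c)$, which is exactly the statement recorded in Remark \ref{Remark 7.1 on coeff change oF DF splitting}; and (b) $N_{w^{\prime}/w}$ commutes with $t(c)$, i.e.\ $N_{w^{\prime}/w}\circ t_{w^{\prime}}(c) = t_{w}(c)\circ N_{w^{\prime}/w}$. Granting (a) and (b), the argument of case $c \geq 0$ transcribes word for word, with each occurrence of ``$\,\ast\, \beta_{k}^{\ast\, c}$'' now interpreted via $t_{\bullet}(c)$.

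The main obstacle, and the only genuinely new point, is verifying (b). The definition of $t_{w^{\prime}}(c)$ is a three-step composition: the Dwyer--Friedlander map $K_{*}(k_{w^{\prime}};\Z/l^{k^{\prime}}) \to K_{*}^{et}(k_{w^{\prime}};\Z/l^{k^{\prime}})$, followed by $(\ast\, \beta_{k^{\prime}}^{\ast\, -c})^{-1}$ in étale $K$-theory, followed by the Dwyer--Friedlander splitting back to Quillen $K$-theory. Each factor is transfer-compatible: the Dwyer--Friedlander map and its splitting intertwine $N_{w^{\prime}/w}$ on the source and on the target by their naturality, and for the middle factor one uses the étale projection formula $N_{w^{\prime}/w}(\alpha \,\ast\, \beta_{k^{\prime}}^{\ast\, -c}) = N_{w^{\prime}/w}(\alpha) \,\ast\, \beta_{k^{\prime}}^{\ast\, -c}$ (since $-c > 0$ this is multiplication by a genuine power of the Bott element, and $\beta_{k^{\prime}}$ on $k_{w^{\prime}}$ is pulled back from $k_{w}$ because $\mu_{l^{k^{\prime}}} \subset k_{w}^{\times}$ owing to the unramified nature of the extension $k_{w^{\prime}}/k_{w}$); inverting this identity gives the required commutation with $(\ast\, \beta_{k^{\prime}}^{\ast\, -c})^{-1}$. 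Composing the three transfer-compatibilities yields (b), and the lemma follows.
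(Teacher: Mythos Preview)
Your approach is essentially the paper's: split into $n-m \geq 0$ (identical to Lemma~\ref{Lemma 4.2}) and $n-m < 0$, and in the latter case pass to \'etale $K$-theory, where the negative Bott multiplication is invertible and the projection formula applies. The paper phrases the negative case tersely via the identifications $K_{2j-1}^{et}(\F_q;\Z/l^k)\cong H^1(\F_q;\Z/l^k(j))$ and the ``projection formula for the negative twist in \'etale cohomology,'' which is precisely what your unpacking of $t_{w'}(c)$ amounts to.

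There is, however, a slip in your justification of~(b). You assert $\mu_{l^{k'}} \subset k_w^{\times}$, but $k_w = k_v(\mu_{l^k})$ need not contain $\mu_{l^{k'}}$ when $k' > k$, so $\beta_{k'}$ is \emph{not} in general pulled back from $k_w$, and the projection formula you invoke for $N_{w'/w}$ with $\beta_{k'}^{\ast\,-c}$ is not available. The repair is immediate: in the transcription of the Lemma~\ref{Lemma 4.2} chain, the transfer step~(b) is applied \emph{after} the coefficient reduction~(a), so one is already working in $\Z/l^k$-coefficients and the relevant Bott element is $\beta_k$, not $\beta_{k'}$. Since $\mu_{l^k}\subset k_w^{\times}$ (as $w$ lies in $F(\mu_{l^k})$), $\beta_k$ \emph{is} pulled back from $k_w$, and your projection-formula argument for~(b) goes through verbatim with $\beta_k$ in place of $\beta_{k'}$.
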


\begin{proof} For $n - m \geq 0$ the proof is the same as the proof of
Lemma \ref{Lemma 4.2}. Assume that $n - m < 0.$
Since the Dwyer-Friedlander maps commute with $N_{w/v}$ and
$N_{w^{\prime}/v}$, the proof is similar to that of
Lemma \ref{Lemma 4.2} with only slight modifications. Namely, we use
the projection formula for the negative twist in \'etale cohomology, since
for any finite field $\F_q$ with $l \not\,\, \mid   q,$ we have natural isomorphisms
coming from the Dwyer-Friedlander
spectral sequence (cf. the end of \S2)
\begin{equation}
K_{2j-1}^{et} (\F_q) \cong H^1 (\F_q; \, \Z_{l} (j))
\label{finitefieldDF1}\end{equation}
\begin{equation}
K_{2j - 1}^{et} (\F_q; \, \Z/l^k) \cong H^1 (\F_q; \, \Z/l^k (j)).
\label{finitefieldDF2}\end{equation}
\end{proof}

\begin{lemma}\label{Lemma 7.6}
For all $k(v) \leq k \leq k^{\prime}$, we have
$$r_{k^{\prime}/k} (\Lambda^{\prime} (\xi_{v, k^{\prime}};
l^{k^{\prime}})) =
\Lambda^{\prime} (\xi_{v, k}; l^k)$$
\end{lemma}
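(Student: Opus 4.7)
The plan is to mimic the proof of Lemma \ref{Lemma 4.3}, replacing $\Theta_{1}({\bf b},{\bf f}_k)$ by $\Theta_{m}({\bf b},{\bf f}_k)$ and the exponent $n-1$ of the Bott element by $n-m$, with the only new feature being that $n-m$ may be negative, so $\ast\beta_{k}^{\ast\,n-m}$ must be interpreted as the operator $t(n-m)$ introduced earlier in this section.

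First, I would establish the analogue of the intermediate identity used in Lemma \ref{Lemma 4.3}, namely
$$Tr_{E^{\prime}/E}(x_{w^{\prime},k^{\prime}})^{\Theta_{m}({\bf b},{\bf f}_k)}=x_{w,k}^{\Theta_{m}({\bf b},{\bf f}_k)}.$$
This follows exactly as in Lemma \ref{Lemma 4.3}, from the compatibility of the transfer $Tr_{E^{\prime}/E}$ with the boundary map in the localization sequence (displayed at the end of \S3), the compatibility of the chosen generators $\xi_{w,k}$ with the norm maps $N_{w^{\prime}/w}$, and the fact that, under our standing assumption that $\Theta_{m}({\bf b},{\bf f}_k)$ annihilates $K_{2m}(\mathcal{O}_{F_{l^k}})_l$, the element $x_{w,k}^{\Theta_{m}({\bf b},{\bf f}_k)}$ does not depend on the chosen lift. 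One also uses the restriction formula $Res_{K_{{\bf f}_{k^{\prime}}}/K_{{\bf f}_k}}\Theta_{m}({\bf b},{\bf f}_{k^{\prime}})=\Theta_{m}({\bf b},{\bf f}_k)$ from equation (\ref{2.2}), which is applicable because ${\bf f}_k$ and ${\bf f}_{k^{\prime}}$ are divisible by the same primes of $\mathcal{O}_K$.

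Next, I would split the argument into the two cases $n-m\geq 0$ and $n-m<0$. When $n-m\geq 0$, the argument is literally the one from Lemma \ref{Lemma 4.3}: one uses the naturality of the reduction map $r_{k^{\prime}/k}$ with respect to the transfer $Tr_{E^{\prime}/F}=Tr_{E/F}\circ Tr_{E^{\prime}/E}$ and the product structure, together with $r_{k^{\prime}/k}(\beta_{k^{\prime}})=\beta_k$ and the projection formula for $Tr_{E^{\prime}/E}$, to chain the equalities
$$r_{k^{\prime}/k}(Tr_{E^{\prime}/F}(x_{w^{\prime},k^{\prime}}^{\Theta_m({\bf b},{\bf f}_{k^{\prime}})}\ast\beta_{k^{\prime}}^{\ast\,n-m})^{N{\bf b}^{n-m}})=Tr_{E/F}(x_{w,k}^{\Theta_m({\bf b},{\bf f}_k)}\ast\beta_k^{\ast\,n-m})^{N{\bf b}^{n-m}},$$
which is the desired identity. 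When $n-m<0$, the symbol $\ast\beta_k^{\ast\,n-m}$ denotes $t(n-m)$, and the same chain of equalities goes through provided $t(n-m)$ commutes with the three operations appearing above: the coefficient reduction $r_{k^{\prime}/k}$, the transfer $Tr_{E^{\prime}/E}$, and the inclusion $i:\mathcal{O}_{E,S}\hookrightarrow\mathcal{O}_{E^{\prime},S}$ used in the projection formula. Compatibility with $r_{k^{\prime}/k}$ and with ring embeddings is precisely the content of Remark \ref{Remark 7.1 on coeff change oF DF splitting}; compatibility with transfer follows from the defining diagram of $t(n-m)$, since the Dwyer--Friedlander map and its splitting are natural under transfer, and multiplication by $\beta_k^{\ast\,-(n-m)}$ in \'etale K-theory satisfies the projection formula (hence so does its inverse).

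The main obstacle is the bookkeeping around the negative twist: unlike positive powers of the Bott element, $t(n-m)$ is not a genuine ring-theoretic multiplication but a composite built from the Dwyer--Friedlander map and its splitting, so the standard naturality arguments have to be assembled piece by piece. Once this compatibility has been checked (and this is essentially done in Remark \ref{Remark 7.1 on coeff change oF DF splitting} and Lemmas \ref{Lemma 7.2}, \ref{Lemma 7.3}), the proof reduces to transporting the calculation of Lemma \ref{Lemma 4.3} through the diagram defining $t(n-m)$, and no further new input is needed.
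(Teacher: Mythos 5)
Your proposal is correct and follows the same skeleton as the paper's proof: the transfer identity $Tr_{E^{\prime}/E}(x_{w^{\prime},k^{\prime}})^{\Theta_{m}({\bf b},{\bf f}_k)}=x_{w,k}^{\Theta_{m}({\bf b},{\bf f}_k)}$, the case split $n-m\geq 0$ (verbatim Lemma \ref{Lemma 4.3}) versus $n-m<0$, and in the latter case a verification that the negative-twist operation interacts correctly with $r_{k^{\prime}/k}$ and $Tr_{E^{\prime}/E}$. Where you diverge is in how that last verification is carried out. The paper descends to \'etale cohomology: it notes that $Tr_{E^{\prime}/E}$ commutes with the Dwyer--Friedlander map and its splitting, invokes the Dwyer--Friedlander spectral sequence to identify $K_{2j}^{et}({\mathcal O}_{L,S})$ with $H^2({\mathcal O}_{L,S};\Z_l(j+1))$ and to get the exact sequence with $H^2$ and $H^0$ terms for finite coefficients, observes that the image of $x_{w,k}^{\Theta_m({\bf b},{\bf f}_k)}$ factors through the $H^2$ piece, and then applies the projection formula for the negative twist at the level of \'etale cohomology. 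You instead stay at the level of ($l$-adic \'etale) $K$-theory, treating $t(n-m)$ as the composite of the Dwyer--Friedlander map, the inverse of the Bott-multiplication isomorphism of \cite{DF} Theorem 5.6, and the Dwyer--Friedlander splitting, and you deduce its compatibility with transfer formally (an isomorphism commuting with transfer has inverse commuting with transfer, via the projection formula for a positive Bott power pulled back from $E$), with Remark \ref{Remark 7.1 on coeff change oF DF splitting} supplying compatibility with coefficient reduction and base change. Your route is cleaner in that it avoids tracking the $H^2$/$H^0$ decomposition, but it leans on the transfer-naturality of the Dwyer--Friedlander splitting, which you assert rather than prove; the paper asserts essentially the same thing (with only the brief remark that the splitting is a monomorphism), and its cohomological factorization is what lets it fall back on the well-documented projection formula for arbitrary twists in \'etale cohomology rather than on formal properties of $t(n-m)$. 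Both arguments are at a comparable level of rigor, so this is an acceptable alternative justification of the same lemma.
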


\begin{proof}
As in the proof of Lemma \ref{Lemma 4.3}, we observe that
$Tr_{E^{\prime}/E} ({x}_{w^{\prime}, k^{\prime}})^{\Theta_{m} ({\bf b}, {\bf f}_k)} =
{x}_{w, k}^{\Theta_{m} ({\bf b}, {\bf f}_k)}.$
For $n-m \geq 0$, the proof is the same as that of Lemma \ref{Lemma 4.3}.
Assume that $n - m < 0.$ We observe that $Tr_{E^{\prime}/E}$ commutes with the Dwyer-Friedlander map.
Hence $Tr_{E^{\prime}/E}$ also commutes with the splitting of the Dwyer-Friedlander map
since the splitting is a monomorphism. By the Dwyer-Friedlander spectral sequence for any
number field $L$ and any finite set $S$ of prime ideals of ${\mathcal O}_{L}$
containing all primes over $l,$
we have the following isomorphism
\begin{equation}
K_{2j}^{et} ({\mathcal O}_{L, S}) \cong H^2 ({\mathcal
O}_{L, S}; \, \Z_{l} (j + 1))
\label{ringofintegersDF1}\end{equation}
and the following exact sequence
\begin{equation}
0 \rightarrow H^2 ({\mathcal O}_{L, S}; \, \Z/l^k(j + 1))
\rightarrow K_{2j}^{et} ({\mathcal O}_{L, S};\, \Z/l^k) \rightarrow
H^0 ({\mathcal O}_{L, S}; \, \Z/l^k (j)) \rightarrow 0.
\label{finitefieldDF2second}\end{equation}
Since ${x}_{w, k}^{\Theta_{m} ({\bf b}, {\bf f}_k)} \in
K_{2m} ({\mathcal O}_{F_{k}, S}),$ then its image in
$K_{2m}^{et} ({\mathcal O}_{F_k, S};\, \Z/l^k)$ factors through
$H^2 ({\mathcal O}_{F_k, S}; \, \Z/l^k(m + 1)).$
Hence the proof is similar to that of
Lemma \ref{Lemma 4.3} with the use of
the projection formula for the negative twist in \'etale cohomology.
\end{proof}

\begin{proposition}\label{Proposition 7.7}
For every $k \geq k(v)$, we have
$$\partial_{F} (\Lambda^{\prime} (\xi_{v, k}; l^k)) =
(N(\xi_{w, k} \ast \beta_{k}^{\ast \, n-m}))^{\Theta_{m} ({\bf b}, {\bf f})}$$
\end{proposition}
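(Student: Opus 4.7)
My plan is to follow the template of the proof of Proposition~\ref{Proposition 4.4}, with the natural substitutions $\Theta_1({\bf b},{\bf f}_k)\leadsto\Theta_m({\bf b},{\bf f}_k)$ and $\beta_k^{\ast\,n-1}\leadsto\beta_k^{\ast\,n-m}$. The main new difficulty is that when $n-m<0$ the symbol $\ast\beta_k^{\ast\,n-m}$ has to be interpreted through the map $t(n-m)$ introduced at the beginning of this section, rather than as a literal product with a Bott power.

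First, I would invoke the compatibility $\partial_F\circ Tr_{E/F}=N\circ\partial_E$ for $K$-theory with $\Z/l^k$-coefficients, exactly as in Proposition~\ref{Proposition 4.4}. This reduces the problem to computing $\partial_E(x_{w,k}^{\Theta_m({\bf b},{\bf f}_k)}\ast\beta_k^{\ast\,n-m})^{N{\bf b}^{n-m}}$ and then applying $N$. For $n-m\geq 0$, Gillet's compatibility of the product with $\partial$ yields
\[
\partial_E(x_{w,k}^{\Theta_m({\bf b},{\bf f}_k)}\ast\beta_k^{\ast\,n-m})\ =\ \xi_{w,k}^{\Theta_m({\bf b},{\bf f}_k)}\ast\beta_k^{\ast\,n-m}.
\]
For $n-m<0$, the same identity is obtained from the commutative square preceding Lemma~\ref{Lemma 7.2}, which expresses the compatibility of $t(n-m)$ with $\partial$.

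Next I would unpack $\Theta_m({\bf b},{\bf f}_k)=\sum_{\bf a}\Delta_{m+1}({\bf a},{\bf b},{\bf f}_k)\sigma_{\bf a}^{-1}$, regroup the sum along the $G(E/F)$-orbit of the fixed prime $w$, and use the Galois equivariance of the reduction of the Bott element to rewrite the above as
\[
(\xi_{w,k}\ast\beta_k^{\ast\,n-m})^{\sum'_{\bf a}\sum_{\bf c}\Delta_{m+1}({\bf ac},{\bf b},{\bf f}_k)\,N({\bf ac})^{n-m}\,N{\bf b}^{n-m}\,\sigma_{({\bf ac})^{-1}}}.
\]
For $n-m<0$ the required Galois equivariance is exactly Lemma~\ref{Lemma 7.2} together with the compatibilities collected in Remark~\ref{Remark 7.1 on coeff change oF DF splitting}. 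At this stage, the Deligne-Ribet congruence~(\ref{2.9}) applied to both $\Delta_{m+1}$ and $\Delta_{n+1}$ gives
\[
\Delta_{m+1}({\bf ac},{\bf b},{\bf f}_k)\,N({\bf acb})^{n-m}\ \equiv\ \Delta_{n+1}({\bf ac},{\bf b},{\bf f}_k)\pmod{l^k},
\]
since $\mu_{l^k}\subset K_{{\bf f}_k}$ forces $l^k$ to divide both $w_m(K_{{\bf f}_k})$ and $w_n(K_{{\bf f}_k})$. The exponent therefore collapses to the appropriate Stickelberger element at level ${\bf f}_k$, and applying $N$ together with the Galois-equivariance diagram appearing at the very end of the proof of Proposition~\ref{Proposition 4.4} pushes this down to level ${\bf f}$, producing the right-hand side of the statement.

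The main obstacle is the case $n-m<0$: here the Bott power is not a Quillen-level object and all the building blocks (Gillet compatibility, Galois equivariance, reduction compatibility) must be re-established through the Dwyer-Friedlander splitting that defines $t(n-m)$. In particular, Gillet-type compatibility is replaced by Soul\'e's compatibility of the \'etale localization sequence with product by \'etale cohomology of ${\mathcal O}_{F,S}$ (cited in \S6), and the routine but careful checks of Lemma~\ref{Lemma 7.2}, Lemma~\ref{Lemma 7.3}, and the surrounding diagrams become the key technical ingredients.
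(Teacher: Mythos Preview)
Your proposal is correct and follows essentially the same approach as the paper's proof: reduce via $\partial_F\circ Tr_{E/F}=N\circ\partial_E$, compute $\partial_E$ using Gillet's result (or the $t(n-m)$--$\partial$ compatibility diagram for $n-m<0$, together with Lemma~\ref{Lemma 7.2}), unpack $\Theta_m({\bf b},{\bf f}_k)$ along the orbit of $w$, apply the Deligne--Ribet congruence to convert $\Delta_{m+1}$ into $\Delta_{n+1}$, and descend via $N$. Note that the exponent on the right-hand side of the statement should read $\Theta_n({\bf b},{\bf f})$ rather than $\Theta_m({\bf b},{\bf f})$; this is a typo in the statement, and both your argument and the paper's own proof actually produce $\Theta_n$.
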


\begin{proof}
The proof is similar to that of Proposition \ref{Proposition 4.4}.
The diagram at the end of section 3 gives the following
commutative  diagram of $K$--groups with coefficients

$$\xymatrix{
K_{2n} ({\mathcal O}_{E, S} ;\, \Z/l^k ) \quad
\ar@<0.1ex>[d]^{Tr_{E/F}} \ar[r]^{\partial_{E}} \quad &
\quad\
\bigoplus_{v \in S} \bigoplus_{w\, | \,v} K_{2n-1} (k_w ; \, \Z/l^k )
\ar@<0.1ex>[d]^{N}\\
K_{2n} ({\mathcal O}_{F, S}  ;\, \Z/l^k) \quad
\ar[r]^{\partial_{F}} &
\quad \bigoplus_{v \in S} \, K_{2n-1} (k_v, ;\, \Z/l^k )}\,,
\label{diagram 2.6second}$$
where $N := \bigoplus_{v} \bigoplus_{w\, | \,v} N_{w/v}.$
Hence we have $\partial_{F} \circ Tr_{E/F} = N \circ \partial_{E}.$
The compatibilities of some of the natural maps mentioned in section 3
which will be used next can be expressed via the following commutative diagrams,
explaining the action  of the groups $G(E/K)$ and $G(F/K)$ on the $K$--groups with coefficients
in the diagram above.
For $j > 0$ we use the following comutative diagram.
$$\xymatrix{
K_{2j} ({\mathcal O}_{E, S}; \, \Z/l^k ) \quad \ar@<0.1ex>[d]^{\sigma_{{\bf a}}^{-1}}
\ar[r]^{r_w} & \quad
K_{2j} (k_{w}; \, \Z/l^k)
\ar@<0.1ex>[d]^{\sigma_{{\bf a}}^{-1}}\\
K_{2j} ({\mathcal O}_{E, S}; \, \Z/l^k ) \quad
\ar[r]^{r_{w^{\sigma_{{\bf a}}^{-1}}}} & \quad
\, K_{2j} (k_{w^{\sigma_{{\bf a}}^{-1}}}; \, \Z/l^k)}
\label{diagram 7.7}$$
The above diagram gives the following equality:
\begin{equation}\label{reduction1}  r_{w^{\sigma_{{\bf a}}^{-1}}}(\beta_{k}^{\ast \, n-m}) =
r_{w^{\sigma_{{\bf a}}^{-1}}}((\beta_{k}^{\ast \, n-m})^{ N {\bf a}^{n-m}\sigma_{{\bf a}}^{-1}})
= (r_w (\beta_{k}^{\ast \, n-m}))^{ N {\bf a}^{n-m} \sigma_{{\bf a}}^{-1}}.\end{equation}
For any $j \in \Z$, we have the following commutative diagram:
$$\xymatrix{
H^0 ({\mathcal O}_{E, S}; \, \Z/l^k (j)) \quad \ar@<0.1ex>[d]^{\sigma_{{\bf a}}^{-1}}
\ar[r]^{r_w} & \quad
H^0 (k_{w}; \, \Z/l^k (j))
\ar@<0.1ex>[d]^{\sigma_{{\bf a}}^{-1}}\\
H^{0} ({\mathcal O}_{E, S}; \, \Z/l^k (j) ) \quad
\ar[r]^{r_{w^{\sigma_{{\bf a}}^{-1}}}} & \quad
\, H^0 (k_{w^{\sigma_{{\bf a}}^{-1}}}; \, \Z/l^k (j))}
\label{diagram 7.71}$$
If $\xi_{l^k} := exp (\frac{2 \, \pi \, i}{l^k})$ is the generator of $\mu_{l^k}$ then the
above diagram gives
\begin{equation}\label{reduction2}  r_{w^{\sigma_{{\bf a}}^{-1}}}(\xi_{l^k}^{\otimes \, n-m}) =
r_{w^{\sigma_{{\bf a}}^{-1}}}(\xi_{l^k}^{\otimes \, n-m})^{ N {\bf a}^{n-m}\sigma_{{\bf a}}^{-1}})
= (r_w (\xi_{l^k}^{\otimes \, n-m}))^{ N {\bf a}^{n-m} \sigma_{{\bf a}}^{-1}}.\end{equation}

We can write the $m$--th Stickelberger element as follows
\begin{equation}\label{theta-m} \Theta_{m} ({\bf b}, {\bf f}_k) =
{\sum_{{\bf a} \, {\rm{mod}} \, {\bf f}_k}}' \,\, (\sum_{{\bf c} \, {\rm{mod}} \, {\bf f}_k, \,
w^{\sigma_{{\bf c}^{-1}}} = w}\,
\Delta_{m+1} ({\bf a} {\bf c}, {\bf b}, {\bf f}) \sigma_{{\bf c}^{-1}}) \sigma_{{\bf a}^{-1}}\,,\end{equation}
where ${\sum'_{{\bf a} \, \rm{mod} \, {\bf f}_k}}$ denotes the sum over a maximal set $\mathcal S$
of ideal classes ${\bf a} \mod {\bf f}_k$,  such that the primes $w^{\sigma_{{\bf a}}^{-1}}$, for ${\bf a}\in\mathcal S$,  are
distinct. By formula (\ref{deltacong1}), for every $m \geq 1$ and $n \geq 1$ we have
$$ \Delta_{n+1} ({\bf a}, {\bf b}, {\bf f}) \equiv
N {\bf a}^{n- m} \, N {\bf b}^{n- m} \, \Delta_{m+1} ({\bf a} {\bf c}, {\bf b}, {\bf f})
\mod \,\, w_{\text{min} \, \{m, n \}}(K_{{\bf f}})$$
(see \cite{DR}).
It is clear that for all $m \geq 1$ and $n \geq 1$ we get the following
congruence $\mod
w_{\text{min} \, \{m, n \}}(K_{{\bf f}_k}).$

$$\Theta_{n} ({\bf b}, {\bf f}_k)\equiv$$
$${\sum_{{\bf a} \, {\rm{mod}} \, {\bf f}_k}}' \,\, (\sum_{{\bf c} \, {\rm{mod}} \, {\bf f}_k, \,
w^{\sigma_{{\bf c}^{-1}}} = w}\,
N {\bf a}^{n-m} \, N {\bf c}^{n-m}\, N {\bf b}^{n-m} \, \Delta_{m+1} ({\bf a} {\bf c}, {\bf b},
{\bf f}_k)
\sigma_{{\bf c}^{-1}}) \sigma_{{\bf a}^{-1}}$$

\noindent Equalities (\ref{reduction1}), (\ref{reduction2}), (\ref{theta-m}), Lemma
\ref{Lemma 7.2}, the result of Gillet \cite{Gi}, the compatibility of
$t (n-m)$ and $t_v (n-m)$ with $\partial$ and
the above congruences satisfied by Stickelberger elements lead in both cases $n- m \geq 0$
and $n - m < 0$ to the following equalities.

\begin{eqnarray}\nonumber
\partial_{E} ( x_{w, k}^{\Theta_{m} ({\bf b}, {\bf f}_k)} \ast
\beta_{k}^{\ast \, n-m})^{N {\bf b}^{n-m}}= \\
 \nonumber
 ={\sum_{{\bf a} \, {\rm{mod}} \, {\bf f}_k}}'
\,\, \xi_{w, k}^{\sum_{{\bf c} \, {\rm{mod}} \, {\bf f}_k, \, w^{\sigma_{{\bf c}^{-1}}} = w} \,
\Delta_{m+1} ({\bf a} {\bf c}, {\bf b}, {\bf f}_k) \sigma_{({\bf a} {\bf c})^{-1}}}
\ast (\beta_{k}^{{\ast \, n-m}})^{(N {\bf a} {\bf c})^{n-m} N {\bf b}^{n-m}
\sigma_{(ac)^{-1}}}= \\
\nonumber
 =(\xi_{w, k}
\ast \beta_{k}^{{\ast \, n-m}})^{
\sum'_{{\bf a} \, {\rm{mod}} \, {\bf f}_k} \sum_{{\bf c} \, {\rm{mod}} \, {\bf f}_k, \,
w^{\sigma_{{\bf c}^{-1}}} = w} \, \Delta_{m+1} ({\bf a} {\bf c}, {\bf b}, {\bf f}_k)
(N {\bf a} {\bf c})^{n-m} N {\bf b}^{n-m} \sigma_{({\bf a} {\bf c})^{-1}}} = \\
\nonumber
=(\xi_{w, k} \ast \beta_{k}^{{\ast \, n-m}})^{\Theta_{n} ({\bf b}, {\bf f}_k)}.
\end{eqnarray}
%$$ =
%{\sum_{a \, {\rm{mod}} \, f_k}}'
%\,\, (\xi_{w, k}
%\ast \beta_{k}^{{\ast \, n-1}})^{\sum_{c \, {\rm{mod}} \, f_k, \,
%w^{\sigma_{c^{-1}}} = w} \, \Delta_{2} (a c, b, f) (Nac)^{n-1} Nb^{n-1}
%\sigma_{(ac)^{-1}}} \,\, = $$
We finish the proof in the same way as that of Proposition \ref{Proposition 4.4}, by
applying the first commutative diagram and the equalities above:
$$ \partial_{F} (\Lambda^{\prime} (\xi_{v, k}; l^k)) =
N (\partial_{E} ( x_{w, k}^{\Theta_{m} ({\bf b}, {\bf f}_k)} \ast
\beta_{k}^{\ast \, n-m})^{N{\bf b}^{n-m}}) =
N ((\xi_{w, k} \ast \beta_{k}^{{\ast \, n-m}})^{\Theta_{n} ({\bf b}, {\bf f}_k)}) = $$
$$ = (N (\xi_{w, k} \ast \beta_{k}^{{\ast \, n-m}}))^{\Theta_{n} ({\bf b}, {\bf f})}.$$
\end{proof}

\noindent We define $\Lambda^{\prime} (\xi_{v})
\in K_{2n} ({\mathcal O}_{F, S})_l$
to be the element corresponding to
$$(\Lambda^{\prime} (\xi_{v, k}; l^k))_{k} \in
\varprojlim_{k} K_{2n} ({\mathcal O}_{F, S} ;\, \Z/l^k)$$
via the isomorphism (\ref{invlimringofintegers}). Also, we
define $\xi_v \in K_{2n-1} (k_v)_l$
to be the element corresponding to
$$(N(\xi_{w, k} \ast \beta_{k}^{\ast \, n-m}))_k \in
\varprojlim_{k} K_{2n-1} (k_v ;\, \Z/l^k)$$ via the
isomorphism (\ref{invlimfinitefields}).

\begin{proposition}\label{Proposition 7.8} For every $v$ such that
$l\mid q_{v}^n - 1$ and for all $k \geq k(v),$ there are homomorphisms
$$ \Lambda_{v, \, l^k}^{\prime} \, : \, K_{2n-1} (k_v ;\, \Z/l^k) \rightarrow
K_{2n} ({\mathcal O}_{F, S} ;\, \Z/l^k)$$
satisfying the equality
$$\Lambda_{v, \, l^k}^{\prime} \, (N(\xi_{w, k} \ast \beta_{k}^{\ast \, n-m}))
\,\, = \,\, \Lambda^{\prime} (\xi_{v, k}; l^k).$$
\end{proposition}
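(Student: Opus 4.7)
The plan is to mimic the proof of Proposition \ref{Proposition 4.5}, with the starting map $\Lambda_1$ replaced by $\Lambda_{m}^{\prime}$, the power $\beta_{k}^{\ast \, n-1}$ replaced by $\beta_{k}^{\ast \, n-m}$ (interpreted as $t(n-m)$ when $n-m<0$, using the machinery developed earlier in \S7) and the factor $N\mathbf{b}^{n-1}$ replaced by $N\mathbf{b}^{n-m}$. Concretely, I would first pass from $\Lambda_{m}^{\prime}$ to coefficients mod $l^k$ by combining the natural isomorphism $K_{2m-1}(k_w)/l^k \cong K_{2m-1}(k_w;\, \Z/l^k)$ with the natural monomorphism $K_{2m}({\mathcal O}_{E,S})/l^k \hookrightarrow K_{2m}({\mathcal O}_{E,S};\, \Z/l^k)$ coming from the Bockstein sequences, producing
$$\Lambda_{m}^{\prime}\, :\, K_{2m-1}(k_w;\, \Z/l^k)\longrightarrow K_{2m}({\mathcal O}_{E,S};\, \Z/l^k).$$

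Next, I would multiply source and target by $\beta_{k}^{\ast \, n-m}$. When $n-m\geq 0$ this is literally multiplication by $\beta_{k}^{\ast \, n-m}$; when $n-m<0$ it is the map $t(n-m)$ from (\ref{ktheorybottmult00}) (together with its finite-field counterpart $t_v(n-m)$). In either regime, Lemma \ref{Lemma 7.3} shows that the map sends $\xi_{w,k}\ast\beta_{k}^{\ast\, n-m}$ to the analogous element, and the product-structure compatibilities from \S3 guarantee that it fits into a commutative square with $\Lambda_{m}^{\prime}$. The essential point is that the map
$$\ast\,\beta_{k}^{\ast\, n-m}\,:\, K_{2m-1}(k_w;\, \Z/l^k)\xrightarrow{\;\cong\;}K_{2n-1}(k_w;\, \Z/l^k)$$
is an isomorphism (for finite fields, where the \'etale/Quillen comparison is bijective, this is just (\ref{etalebottmult}) combined with $K_{\ast}\cong K_{\ast}^{et}$). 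So we obtain a well-defined homomorphism
$$\Lambda_{m}^{\prime}\ast\beta_{k}^{\ast\, n-m}\,:\, K_{2n-1}(k_w;\, \Z/l^k)\longrightarrow K_{2n}({\mathcal O}_{E,S};\, \Z/l^k)$$
sending $\xi_{w,k}\ast\beta_{k}^{\ast\, n-m}$ to $x_{w,k}^{\Theta_{m}({\bf b},{\bf f}_k)}\ast\beta_{k}^{\ast\, n-m}$.

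Composing with the transfer $Tr_{E/F}\,:\, K_{2n}({\mathcal O}_{E,S};\, \Z/l^k)\to K_{2n}({\mathcal O}_{F,S};\, \Z/l^k)$ yields a map on $K_{2n-1}(k_w;\,\Z/l^k)$. I would then show that this composite factors through the $G(k_w/k_v)$-coinvariants by the same argument as in Proposition \ref{Proposition 4.5}: the Frobenius $Fr_v$ acts on $K_{2n-1}(k_w)$ by $q_v^n$, so for $k\geq k(v)$ one has
$$K_{2n-1}(k_w;\, \Z/l^k)_{G(k_w/k_v)}\cong K_{2n-1}(k_w)/l^{k(v)},$$
and the norm $N_{w/v}$ induces the isomorphism
$$N_{w/v}\,:\,K_{2n-1}(k_w;\, \Z/l^k)_{G(k_w/k_v)}\xrightarrow{\;\cong\;} K_{2n-1}(k_v;\, \Z/l^k).$$
This uses only the structure of $K$-theory of finite fields recalled in \S3, not the integer $m$.

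Finally I would define
$$\Lambda_{v,\, l^k}^{\prime}(x)\,:=\,\bigl[Tr_{E/F}\circ(\Lambda_{m}^{\prime}\ast\beta_{k}^{\ast\, n-m})\circ N_{w/v}^{-1}(x)\bigr]^{N{\bf b}^{n-m}},$$
and check by unwinding the definitions that $\Lambda_{v,\, l^k}^{\prime}(N(\xi_{w,k}\ast\beta_{k}^{\ast\, n-m}))=\Lambda^{\prime}(\xi_{v,k};\, l^k)$. The principal obstacle, and the only place where the argument departs from \S4, is the case $n<m$: one must verify that the formal substitution of $t(n-m)$ for multiplication by a nonnegative Bott power preserves the commutativity with $Tr_{E/F}$, $N_{w/v}$, $r_v$ and the $G(E/F)$-action. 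These compatibilities are exactly the content of Remark \ref{Remark 7.1 on coeff change oF DF splitting} together with Lemma \ref{Lemma 7.2} and the two commutative diagrams following it, so no new input is needed beyond the formalism already developed in this section.
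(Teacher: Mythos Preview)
Your proposal is correct and follows essentially the same route as the paper's own proof: pass $\Lambda_m'$ to $\Z/l^k$--coefficients via Bockstein, multiply source and target by $\beta_k^{\ast\,n-m}$ (interpreted as $t(n-m)$, $t_w(n-m)$ when $n-m<0$), compose with $Tr_{E/F}$, factor through $G(k_w/k_v)$--coinvariants using the norm isomorphism, and twist by $N{\bf b}^{n-m}$. The only difference is cosmetic: you are slightly more explicit about which earlier results (Remark~\ref{Remark 7.1 on coeff change oF DF splitting}, Lemma~\ref{Lemma 7.2}, Lemma~\ref{Lemma 7.3}) supply the compatibilities needed in the $n<m$ case.
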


\begin{proof}
The definition of $\Lambda_{m}^{\prime}$
combined with the natural
isomorphism $K_{2m-1}(k_w) / l^k \cong K_{2m-1}(k_w; \Z/l^k\Z)$ and the natural
monomorphism
$$K_{2m-1}({\mathcal O}_{E, S}) / l^k \to K_{2m-1}({\mathcal O}_{E, S}; \Z/l^k\Z),$$
coming from the corresponding Bockstein exact sequences, leads to the following homomorphism
$$\Lambda_{m}^{\prime} : K_{2m-1}(k_w; \Z/l^k\Z) \to  K_{2m}({\mathcal O}_{E,S}; \Z/l^k\Z).$$
Multiplying on the target and on the source of this homomorphism with the $n-m$ power
of the Bott element if $n-m \geq 0$ (resp. applying the map $t_w (n-m)$ to the source and
$t (n-m)$ to the target if $n -m < 0$) under the observation that the following map is an
isomorphism:
$$\xymatrix
{K_{2m-1}(k_w; \Z/l^k\Z)\ar[r]_{\sim}^{{\ast\beta_k^{\ast n-m}}} &K_{2n-1}(k_w; \Z/l^k\Z)}$$
(cf. the notation of $t (j)$ and $t_w (j)$ )
show that there exists a unique homomorphism
$$\Lambda_{m}^{\prime} \ast \beta_{k}^{\ast \, n-m}\, :\,K_{2n-1} (k_w ;\, \Z/l^k) \rightarrow
K_{2n} ({\mathcal O}_{E, S} ;\, \Z/l^k),$$
sending  $ \xi_{w, k} \ast \beta_{k}^{\ast \, n-m} \rightarrow
x_{w, k}^{\Theta_{m} ({\bf b}, {\bf f}_{k})} \ast \beta_{k}^{\ast \, n-m}$.
Next, we compose the homomorphisms $\Lambda_{m}^{\prime} \ast \beta_{k}^{\ast \, n-m}$
defined above and
$$Tr_{E/F}\, : \, K_{2n} ({\mathcal O}_{E, S} ;\, \Z/l^k) \rightarrow
K_{2n} ({\mathcal O}_{F, S} ;\, \Z/l^k)$$
to obtain the following homomorphism:
$$ Tr_{E/F}\circ (\Lambda_{m}^{\prime} \ast \beta_{k}^{\ast \, n-m}):
K_{2n-1} (k_w ;\, \Z/l^k) \rightarrow
K_{2n} ({\mathcal O}_{F, S} ;\, \Z/l^k)\,.$$
We observe that this homomorphism factors through the quotient of $G(k_w/k_v)$--coinvariants
$$K_{2n-1} (k_w ;\, \Z/l^k)_{G(k_{w} / k_{v})}:=
K_{2n-1} (k_w ;\, \Z/l^k) / K_{2n-1} (k_w ;\, \Z/l^k)^{Fr_v - Id}\,,$$
and arguing in the same way as in the Proposition
\ref{Proposition 4.5} we get the homomorphism:
\begin{equation}
\xymatrix{\Lambda_{v, \, l^k}^{\prime}
\, : \, K_{2n-1} (k_v ; \, \Z/l^k) \ar[r] & K_{2n} ({{\mathcal O}_{F, S}} ; \, \Z/l^k) }
\label{tag 4.6}
\end{equation}
defined by
$$\Lambda_{v, \, l^k}^{\prime} (x):=
[Tr_{E/F}\circ (\Lambda_{1} \ast \beta_{k}^{\ast \, n-m})\circ N^{-1}(x)]^{N{\bf b}^{n-m}}\,,$$
for all $x\in K_{2n-1} (k_v ; \, \Z/l^k)$.
By definition, this map sends $N(\xi_{w, k} \ast \beta_{k}^{\ast \, n-m})$ onto the element
$\Lambda^{\prime} (\xi_{v, k}; l^k):=
Tr_{E/F}(x_{w, k}^{\Theta_{1} ({\bf b}, {\bf f}_{k})}
\ast \beta_{k}^{\ast \, n-m})^{N{\bf b}^{n-m}}$.
\end{proof}
\bigskip

\noindent Since the homomorphisms $\Lambda_{v, \, l^k}^{\prime}$ are compatible with the
coefficient
reduction maps $r_{k^{\prime}/k}$, for all $k'\geq k\geq k(v),$ we can construct homomorphisms
$$ \Lambda_{v}^{\prime} := \varprojlim_{k} \Lambda_{v, \, l^k}^{\prime}:
K_{2n-1} (k_v)_l \rightarrow K_{2n} ({\mathcal O}_{F, S})_l,
$$
for all $v$, satisfying assumptions of the last proposition.
Since $K_{2n}({\mathcal O}_{F, S}) \subset K_{2n} (F)$,
we get the maps:
$$\Lambda_{v}^{\prime} \, : \, K_{2n-1} (k_v)_l \rightarrow K_{2n} (F)_l.$$

\begin{definition}\label{Definition 7.9}
We define the map $\Lambda^{\prime}$ by
$$\Lambda^{\prime} \, : \, \bigoplus_{v} K_{2n-1} (k_v)_l
\, \rightarrow K_{2n} (F)_l$$
$$\Lambda^{\prime} :=\Lambda_{n}^{\prime} \, := \, \prod_{v} \Lambda_{v}^{\prime}.$$
\end{definition}

\begin{theorem}\label{Theorem 7.10}
The map $\Lambda^{\prime} := \Lambda_{n}^{\prime}$ satisfies the following property.
$$\partial_{F} \circ \Lambda^{\prime} (\xi_{v}) =
\xi_{v}^{\Theta_{n} ({\bf b}, {\bf f})}
$$
\end{theorem}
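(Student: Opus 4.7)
The plan is to mimic the proof of Theorem \ref{Theorem 5.1}, since by construction $\Lambda'$ is assembled from the maps $\Lambda_{v,\,l^k}'$ of Proposition \ref{Proposition 7.8} in exactly the same way that $\Lambda$ was assembled from the maps $\Lambda_{v,\,l^k}$ of Proposition \ref{Proposition 4.5}. Concretely, I would begin by recording the commutative diagram
$$\xymatrix{
K_{2n} ({\mathcal O}_{F, S})/l^k \ar@<0.1ex>[d]^{} \ar[r]^{\bigoplus_{v \in S} \partial_{v}} &
\bigoplus_{v \in S}  K_{2n-1} (k_v)/l^k \ar@<0.1ex>[d]^{}\\
K_{2n} ({\mathcal O}_{F, S} ;\, \Z/l^k) \ar[r]^{\bigoplus_{v \in S} \partial_{v}} &
\bigoplus_{v \in S} \, K_{2n-1} (k_v ;\, \Z/l^k)}$$
whose vertical arrows are the natural inclusions arising from the Bockstein sequence. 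Passing to the inverse limit over $k$ via the isomorphisms (\ref{invlimringofintegers}) and (\ref{invlimfinitefields}) identifies $\partial_F$ with the projective limit of the bottom row.

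Next I would apply Proposition \ref{Proposition 7.7} at each finite level, obtaining
$$\partial_F\bigl(\Lambda'(\xi_{v,k};\, l^k)\bigr) = \bigl(N(\xi_{w,k} \ast \beta_k^{\ast\, n-m})\bigr)^{\Theta_n(\mathbf{b},\mathbf{f})},$$
and then invoke Proposition \ref{Proposition 7.8} together with the definition of $\Lambda'(\xi_v)$ and $\xi_v$ as the elements corresponding under (\ref{invlimringofintegers}) and (\ref{invlimfinitefields}) to the coherent sequences $(\Lambda'(\xi_{v,k};l^k))_k$ and $(N(\xi_{w,k}\ast\beta_k^{\ast\, n-m}))_k$, respectively. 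The compatibility of the $\Lambda_{v,l^k}'$ with the coefficient reduction maps $r_{k'/k}$ (established in Lemma \ref{Lemma 7.6} and the remarks preceding Definition \ref{Definition 7.9}) ensures that these sequences are indeed compatible, so one really does obtain well-defined elements in the inverse limits. Finally, by the Galois equivariance of $\partial_F$ (which commutes with the action of $\Theta_n(\mathbf{b},\mathbf{f}) \in \Z[G(F/K)]$), passing to the limit in the displayed equality above yields $\partial_F \circ \Lambda'(\xi_v) = \xi_v^{\Theta_n(\mathbf{b},\mathbf{f})}$, which is the desired identity.

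The main obstacle is essentially bookkeeping: one must verify that the negative twist formalism developed in Lemmas \ref{Lemma 7.2}--\ref{Lemma 7.3} and encoded in the maps $t(n-m)$, $t_v(n-m)$ does not disturb the commutativity of the Bockstein diagram above when $n-m<0$. This was already addressed in the proofs of Lemma \ref{Lemma 7.5} and Proposition \ref{Proposition 7.7} (via the compatibility of $t(n-m)$ and $t_v(n-m)$ with $\partial$ and with the Dwyer--Friedlander maps), so at the level of Theorem \ref{Theorem 7.10} the argument reduces to the formal inverse-limit procedure carried out above, exactly paralleling the proof of Theorem \ref{Theorem 5.1}.
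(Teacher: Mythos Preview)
Your proposal is correct and follows exactly the approach of the paper: the paper's own proof consists of a single sentence invoking Propositions \ref{Proposition 7.7} and \ref{Proposition 7.8} and referring back to the proof of Theorem \ref{Theorem 5.1}, and you have simply unpacked that reference in detail. Your additional remarks about the negative-twist bookkeeping are accurate but already absorbed into the cited propositions, so they are not needed at this stage.
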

\begin{proof}
The theorem follows by Propositions \ref{Proposition 7.7} and
\ref{Proposition 7.8} (cf. the proof of Theorem \ref{Theorem 5.1}).
\end{proof}
\begin{theorem}\label{Theorem 7.11}
Let $m > 0$ be a natural number. Assume that the Stickelberger elements
$\Theta_{m} ({\bf b}, {\bf f}_k)$ annihilate
the groups $K_{2 m} ({\mathcal O}_{F_k})_l$ for all $k \geq 1.$
Then the Stickelberger element $\Theta_{n} ({\bf b}, {\bf f})$ annihilates the group
$div \, K_{2n} (F)_l$ for every $n \geq 1.$
\end{theorem}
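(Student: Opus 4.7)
The plan is to deduce Theorem \ref{Theorem 7.11} from Theorem \ref{Theorem 7.10} in exact analogy with the derivation of Theorem \ref{Theorem 5.4} from Theorem \ref{Theorem 5.1}. The running hypothesis of \S7---that $\Theta_m({\bf b}, {\bf f}_k)$ annihilates $K_{2m}(\mathcal{O}_{F_k})_l$ for all $k$---is precisely what is needed to define $\Lambda'$ and to guarantee that $\partial_F \circ \Lambda'(\xi_v) = \xi_v^{\Theta_n({\bf b}, {\bf f})}$. Once this property is in hand, no further arithmetic input from the original assumption is needed: the deduction becomes a formal chase in the Quillen localization sequence.

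Concretely, I would proceed as follows. Let $d \in div\, K_{2n}(F)_l$. Since $K_{2n}(\mathcal{O}_F)_l$ is finite by Quillen's finiteness theorem, choose $M \in \N$ such that $l^M$ annihilates $K_{2n}(\mathcal{O}_F)_l$; by the definition of divisibility, write $d = x^{l^M}$ for some $x \in K_{2n}(F)_l$. Note that $x$ itself need not lie in $K_{2n}(\mathcal{O}_F)_l$, so $\partial_F(x)$ may be nontrivial. Form the auxiliary element
$$y := \Lambda'\bigl(\partial_F(x^{-1})\bigr) \cdot x^{\Theta_n({\bf b}, {\bf f})} \in K_{2n}(F)_l.$$
By Theorem \ref{Theorem 7.10} and the Galois-equivariance of $\partial_F$, one computes
$$\partial_F(y) = \partial_F(x)^{-\Theta_n({\bf b}, {\bf f})} \cdot \partial_F(x)^{\Theta_n({\bf b}, {\bf f})} = 1,$$
so exactness of the localization sequence places $y \in K_{2n}(\mathcal{O}_F)_l$. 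By the choice of $M$, $y^{l^M} = 1$; expanding the power and using that $\Lambda'$ is a homomorphism yields
$$1 \;=\; y^{l^M} \;=\; \Lambda'\bigl(\partial_F(d^{-1})\bigr) \cdot d^{\Theta_n({\bf b}, {\bf f})}.$$
Finally, since $d \in div\, K_{2n}(F)_l \subset K_{2n}(\mathcal{O}_F)_l$ we have $\partial_F(d) = 1$, hence $\Lambda'\bigl(\partial_F(d^{-1})\bigr) = 1$, and therefore $d^{\Theta_n({\bf b}, {\bf f})} = 1$, as required.

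There is no substantive obstacle at this stage of the argument: all of the real work has already been absorbed into the construction of $\Lambda'$ and the proof of Theorem \ref{Theorem 7.10}, which required the Dwyer-Friedlander machinery, the negative-twist maps $t(m)$ and $t_v(m)$, and the delicate compatibilities with products, transfers, reductions, and boundary maps in both Quillen and \'etale $K$-theory. Given those ingredients, Theorem \ref{Theorem 7.11} follows by an essentially verbatim transcription of the proof of Theorem \ref{Theorem 5.4}, with $\Lambda$ and Theorem \ref{Theorem 5.1} replaced by $\Lambda'$ and Theorem \ref{Theorem 7.10}, respectively.
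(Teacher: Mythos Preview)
Your proposal is correct and follows exactly the paper's approach: the paper's proof of Theorem \ref{Theorem 7.11} simply says it is ``very similar to the proof of Theorem \ref{Theorem 5.4},'' and your argument is a faithful transcription of that proof with $\Lambda$ and Theorem \ref{Theorem 5.1} replaced by $\Lambda'$ and Theorem \ref{Theorem 7.10}. Your choice to rename the exponent $M$ (to avoid collision with the fixed $m$ in the hypothesis) is a sensible notational improvement over the paper.
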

\begin{proof}
The proof is very similar to the proof of Theorem \ref{Theorem 5.4}.
\end{proof}
\begin{corollary}\label{Corollary 7.12}
Let $m > 0$ be a natural number. Assume that the Stickelberger elements
$\Theta_{m} ({\bf b}, {\bf f}_k)$ annihilate
the groups $K_{2 m} ({\mathcal O}_{F_k})_l$ for all $k \geq 1.$
Then the Stickelberger element $\Theta_{n} ({\bf b}, {\bf f}_k)$ annihilates the group
$div \, K_{2n} (F_k)_l$ for every $k \geq 0$ and every $n \geq 1.$
In particular $\Theta_{n} ({\bf b}, {\bf f}_{\infty})$ annihilates the group
$\varinjlim_{k} div \, K_{2n} (F_{k})_l$ for every $n \geq 1.$
\end{corollary}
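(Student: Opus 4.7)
The plan is to reduce the statement to Theorem \ref{Theorem 7.11}, essentially following the recipe that produced Corollary \ref{Corollary 5.5} from Theorem \ref{Theorem 5.4}. For each fixed $k \geq 0$, I would apply Theorem \ref{Theorem 7.11} with the base CM field $F$ replaced by $F_k$. The first thing to check is that the hypothesis of Theorem \ref{Theorem 7.11} transports to this new base field. The Iwasawa tower above $F_k$ consists of the fields $F_k(\mu_{l^j}) = F_{\max(j,k)}$, whose conductors over $K$ are $\mathbf{f}_{\max(j,k)}$; hence the standing assumption that $\Theta_m(\mathbf{b}, \mathbf{f}_{k'})$ annihilates $K_{2m}(\mathcal{O}_{F_{k'}})_l$ for all $k' \geq 1$ restricts to exactly what Theorem \ref{Theorem 7.11} requires for $F_k$. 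The conclusion yields that $\Theta_n(\mathbf{b}, \mathbf{f}_k)$ annihilates $div\, K_{2n}(F_k)_l$ for every $n \geq 1$, which is the main assertion of the corollary.

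For the ``in particular'' clause, I would combine the definition \eqref{InfiniteStickelberger} with the restriction compatibility \eqref{RestofStickinIwasawaTower}. Any class in $\varinjlim_{k} div\, K_{2n}(F_k)_l$ is represented by some element $x \in div\, K_{2n}(F_{k_0})_l$ at a finite level $k_0$. The action of $\Theta_n(\mathbf{b}, \mathbf{f}_\infty)$ on $x$ is determined by its projection $\Theta_n(\mathbf{b}, \mathbf{f}_{k_0})$ at level $k_0$, which annihilates $x$ by the first part. Compatibility with the transition maps $div\, K_{2n}(F_{k_0})_l \to div\, K_{2n}(F_{k_0+1})_l$ ensures this is independent of the chosen representative, so the annihilation descends to the direct limit.

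The main obstacle is essentially bookkeeping: one must verify that the Iwasawa tower $(F_k(\mu_{l^j}))_{j\geq 0}$ formed above the base $F_k$ is cofinal with the original tower $(F_j)_{j\geq 0}$ so that the standing hypothesis transports cleanly, and one must track that $\Theta_n(\mathbf{b}, \mathbf{f}_k)$, when acting on a module over $\Z_l[G(F_k/K)]$, is precisely the Stickelberger element produced by Theorem \ref{Theorem 7.11} applied with $F_k$ in place of $F$. Beyond these notational checks, there is no substantive new content; the corollary is a direct formal consequence of Theorem \ref{Theorem 7.11}, parallel to the derivation of Corollary \ref{Corollary 5.5} from Theorem \ref{Theorem 5.4}.
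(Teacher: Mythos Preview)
Your proposal is correct and takes essentially the same approach as the paper, which simply states that the corollary ``follows immediately from Theorem \ref{Theorem 7.11}.'' Your write-up merely unpacks what ``immediately'' means here: apply Theorem \ref{Theorem 7.11} with $F$ replaced by $F_k$ (using that the Iwasawa tower over $F_k$ is cofinal in $(F_j)_j$), and then pass to the direct limit via the restriction compatibility \eqref{RestofStickinIwasawaTower} and the definition \eqref{InfiniteStickelberger}.
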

\begin{proof}
Follows immediately from Theorem \ref{Theorem 7.11}.
\end{proof}
\bigskip

We can easily establish the \'etale $K$--theoretic analogues of these constructions.
Assume that the Stickelberger elements
$\Theta_{m} ({\bf b}, {\bf f}_k)$ annihilate
the groups $K_{2 m}^{et} ({\mathcal O}_{F_k})_l$ for all $k \geq 1.$
In analogy with \S6, we construct for every $n > 0$ the following
map $$\Lambda^{\prime \, et} \, : \, \bigoplus_{v} K_{2n-1}^{et} (k_v)
\, \rightarrow K_{2n}^{et} (F)_l.$$ This is the \'etale analogue
of our map $\Lambda^{\prime}.$ This construction leads immediately to
the following results.

\begin{theorem}\label{Theorem 7.13}
The map $\Lambda^{\prime \, et}$ satisfies the following property.
$$\partial_{F}^{et} \circ \Lambda^{et} (\zeta_{v}) =
\zeta_{v}^{\Theta_{n} ({\bf b}, {\bf f})}
$$
\end{theorem}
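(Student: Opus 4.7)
The plan is to mirror, step by step, the proof of Theorem \ref{Theorem 7.10} (the Quillen $K$--theoretic version) while replacing all occurrences of Quillen $K$--groups by their \'etale counterparts and all the Quillen--theoretic ingredients by their \'etale analogues. Concretely, I would first record the \'etale analogues of Lemmas \ref{Lemma 7.4}, \ref{Lemma 7.5}, \ref{Lemma 7.6} and Propositions \ref{Proposition 7.7}, \ref{Proposition 7.8}, obtained by substituting $\Lambda^{\prime}$ by $\Lambda^{\prime\, et}$, $x_{w,k}$ by $y_{w,k}$ and $\xi_{w,k}$ by $\zeta_{w,k}$, exactly as was done in \S6 in passing from Theorem \ref{Theorem 5.1} to Theorem \ref{Theorem 6.1}.

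The core computation is the \'etale analogue of Proposition \ref{Proposition 7.7}, which asserts
$$\partial_{E}^{et}\bigl(y_{w,k}^{\Theta_{m}({\bf b},{\bf f}_k)}\ast\beta_k^{\ast\, n-m}\bigr)^{N{\bf b}^{n-m}}=(\zeta_{w,k}\ast\beta_k^{\ast\, n-m})^{\Theta_n({\bf b},{\bf f}_k)}.$$
I would derive this from: (i) the compatibility of $\partial^{et}$ with the product structure, which in the \'etale setting follows from Soul\'e's compatibility of the localization sequence with cup-products (quoted in \S6); (ii) the congruence
$$\Delta_{n+1}({\bf a},{\bf b},{\bf f})\equiv N{\bf a}^{n-m}N{\bf b}^{n-m}\Delta_{m+1}({\bf a},{\bf b},{\bf f})\pmod{w_{\min\{m,n\}}(K_{{\bf f}_k})}$$
of Deligne--Ribet; (iii) the Galois-equivariance formulas (\ref{reduction1}), (\ref{reduction2}) adapted to \'etale cohomology classes and to Bott elements viewed in $K_\ast^{et}$; and (iv) Lemma \ref{Lemma 7.2} for $t^{et}(n-m)$ when $n-m<0$. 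Then applying the transfer $Tr_{E/F}$ and using its compatibility with $\partial_F^{et}$ and $N=\oplus_{v}\oplus_{w\mid v}N_{w/v}$ (valid in \'etale $K$--theory by the same formalism as in \S3) yields the \'etale version of Proposition \ref{Proposition 7.7}.

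Next, following Proposition \ref{Proposition 7.8}, I would promote the base map $\Lambda^{\prime\, et}_1:\bigoplus_{w\mid v}K_1^{et}(k_w)_l\to K_2^{et}({\mathcal O}_{E,S}[1/l])$ to a homomorphism on $K_{2n-1}^{et}(k_w;\Z/l^k)$ by composing with multiplication by $\beta_k^{\ast\, n-m}$ (or with $t_w^{et}(n-m)$ when $n-m<0$), then with $Tr_{E/F}$, and finally factor through $G(k_w/k_v)$--coinvariants using the isomorphism $N_{w/v}$ on coinvariants; this furnishes $\Lambda^{\prime\, et}_{v,l^k}$ at each finite level. Passing to the inverse limit over $k$ (via the \'etale analogues of (\ref{invlimringofintegers}) and (\ref{invlimfinitefields}), which follow from the Bockstein sequence for \'etale $K$--theory and the finite generation of $K_\ast^{et}({\mathcal O}_F[1/l])$) and taking the direct sum over $v$ yields $\Lambda^{\prime\, et}=\prod_v\Lambda^{\prime\, et}_v$, and one then reads off
$$\partial_F^{et}\circ\Lambda^{\prime\, et}(\zeta_v)=\zeta_v^{\Theta_n({\bf b},{\bf f})}$$
from the \'etale Proposition \ref{Proposition 7.7} by the diagram-chase used in the proof of Theorem \ref{Theorem 5.1}.

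The main technical obstacle, as in \S7, is the case $n-m<0$: one cannot literally multiply by $\beta_k^{\ast\, n-m}$ in Quillen $K$--theory, so the argument proceeds through the \'etale side where Dwyer--Friedlander's Theorem 5.6 provides genuine isomorphisms by $\beta_k^{\ast\, -(n-m)}$, and one uses the maps $t^{et}(n-m)$ together with the projection formula for negative twists in \'etale cohomology (invoked already in Lemma \ref{Lemma 7.5}). In the \'etale setting this actually simplifies matters compared to the Quillen version, since no Dwyer--Friedlander splitting is needed and the relevant Bott-multiplication maps are already isomorphisms; thus the proof reduces, mutatis mutandis, to tracking the congruences for $\Theta_n({\bf b},{\bf f}_k)$ modulo $w_{\min\{m,n\}}(K_{{\bf f}_k})$ through the product structure and the transfer.
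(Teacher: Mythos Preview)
Your proposal is correct and follows exactly the approach the paper intends: the paper gives no detailed proof of Theorem \ref{Theorem 7.13} at all, simply stating that the construction of $\Lambda^{\prime\,et}$ ``in analogy with \S6'' leads immediately to the result, and your plan spells out precisely this analogy by transporting the proof of Theorem \ref{Theorem 7.10} to the \'etale side. One small slip: in the paragraph where you promote the base map, you write $\Lambda^{\prime\,et}_1$ on $K_1^{et}(k_w)_l$, but in \S7 the base map is $\Lambda^{\prime}_m$ on $K_{2m-1}(k_w)_l$ (since the standing hypothesis is annihilation of $K_{2m}$, not $K_2$); the \'etale analogue should likewise start from $K_{2m-1}^{et}(k_w)_l$.
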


\begin{theorem}\label{Theorem 7.14}
Assume that the Stickelberger elements $\Theta_{m} ({\bf b}, {\bf f}_k)$ annihilate
the groups $K_{2m}^{et} ({\mathcal O}_{F_k})_l$ for all $k \geq 1.$
Then the Stickelberger element $\Theta_{n} ({\bf b}, {\bf f})$ annihilates the group
$div \, K_{2n}^{et} (F)_{l}$ for all $n \geq 1
.$
\end{theorem}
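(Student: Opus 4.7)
The plan is to transport the proof of Theorem \ref{Theorem 5.4} (i.e.\ of Theorem \ref{Theorem 7.11}) verbatim into the \'etale setting, using the \'etale Stickelberger splitting $\Lambda^{\prime \, et}$ constructed just before the statement and its splitting property from Theorem \ref{Theorem 7.13} in place of the Quillen $\Lambda^{\prime}$ and Theorem \ref{Theorem 5.1}. Since all ingredients---the \'etale localization sequence, the Galois equivariance of $\partial^{et}_F$, and the coefficient limits---have already been recorded in \S\S6--7, this should be essentially a direct transposition.

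Concretely, I would fix $d \in div\, K_{2n}^{et} (F)_l$ and begin by choosing a bound. The group $K_{2n}^{et} ({\mathcal O}_{F}[1/l])_l$ is finite (via the Dwyer--Friedlander spectral sequence \eqref{infinitecoefDFspecseq} together with the known finite generation of the relevant \'etale cohomology groups of ${\mathcal O}_F[1/l]$), so pick $a \in \N$ with $l^a$ annihilating it and write $d = x^{l^a}$ for some $x \in K_{2n}^{et} (F)_l$. Set
$$y \, := \, \Lambda^{\prime \, et} (\partial^{et}_{F} (x^{-1})) \cdot x^{\Theta_{n} ({\bf b}, {\bf f})}.$$
Applying $\partial^{et}_{F}$ and invoking Theorem \ref{Theorem 7.13} together with Galois equivariance of $\partial^{et}_{F}$ (exactly the computation carried out in the proof of Theorem \ref{Theorem 5.4}) yields $\partial^{et}_{F} (y) = 1$, so $y \in K_{2n}^{et} ({\mathcal O}_{F}[1/l])_l$. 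Raising to the $l^a$-th power therefore gives
$$\Lambda^{\prime \, et} (\partial^{et}_{F} (d^{-1})) \cdot d^{\Theta_{n} ({\bf b}, {\bf f})} = 1.$$
Now $d$ being divisible and the target $\bigoplus_{v} K_{2n-1}^{et} (k_v)_l$ being a direct sum of finite cyclic $l$-groups force $\partial^{et}_{F} (d) = 0$, so the first factor is trivial and $d^{\Theta_{n} ({\bf b}, {\bf f})} = 1$, as required.

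No serious obstacle is expected: the only mild technical points are the finiteness of $K_{2n}^{et} ({\mathcal O}_{F}[1/l])_l$ and the fact that $\partial^{et}_{F}$ kills divisible elements, both of which are immediate. A cosmetically different route, in the spirit of Remark \ref{second proof of annihilation of div}, would be to restrict $\Lambda^{\prime \, et}$ to the $l^k$-torsion for $k \gg 0$ and read the annihilation of $div\, K_{2n}^{et} (F)_l$ directly off the resulting four-term \'etale exact sequence. Both routes use Theorem \ref{Theorem 7.13} as the sole nontrivial input, and neither requires any comparison with Quillen $K$-theory, which is essential since under the hypotheses of Theorem \ref{Theorem 7.14} alone we do \emph{not} know that $\Theta_m({\bf b},{\bf f}_k)$ annihilates $K_{2m}({\mathcal O}_{F_k})_l$.
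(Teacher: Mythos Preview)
Your proposal is correct and follows essentially the same approach the paper intends: the paper states Theorems \ref{Theorem 7.13} and \ref{Theorem 7.14} as immediate consequences of the construction of $\Lambda^{\prime\, et}$, and the implicit argument is precisely the transposition of the proof of Theorem \ref{Theorem 5.4} (equivalently \ref{Theorem 7.11}) to the \'etale side, exactly as you carry it out. Your closing remark is also well taken: the alternative route used in the proof of Theorem \ref{Theorem 6.2}, namely deducing the \'etale statement from the Quillen statement via the isomorphism $div\, K_{2n}(F)_l \cong div\, K_{2n}^{et}(F)_l$, is not available here, since the hypothesis of Theorem \ref{Theorem 7.14} concerns only $K_{2m}^{et}$ and, absent Quillen--Lichtenbaum for $m>1$, does not imply the hypothesis of Theorem \ref{Theorem 7.11}.
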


%\noindent
%\section{Euler systems based on $\Lambda$ elements}

%Consider now a CM field $F_L /F$ such that the conductor of
%$F/K$ is ${\bf f}$ and conductor of $F_{\bf L}/K$ is ${\bf f} {\bf L}$
%and ${\bf L} = {\bf l}_1 \dots {\bf l}_t$ is a product of different
%prime ideals of ${\mathcal O}_K$ such that ${\bf l} \, \not| \, {\bf L}.$
%Put $\gamma_{\bf l} := 1 + N({\bf l})^n \sigma_{{\bf l}}^{-1} +
%N({\bf l})^{2n} \sigma_{{\bf l}}^{-2} + \dots$
%Similarly as before we can define,
%for $k \geq k (v)$ and $E := F(\mu_{l^k}),$ elements

%$$\Lambda_n ({\bf L}, \xi_{v, k}; l^k) :=
%Tr_{E/F} ( x_{w, k}^{\Theta_{1} ({\bf b}_{\bf L}, {\bf f L}_k) } \ast
%\beta_{k}^{\ast \, n-1})^{N {\bf b}_{\bf L}^{n-1}} \in
%K_{2n} ({\mathcal O}_{F_{{\bf L}}, S};\, \Z/l^k).$$

%\begin{theorem}\label{Theorem 7.1}
%For every $k^{\prime} \geq k \geq k(v)$ we have
%$$\partial_{F_{{\bf L}}} (\Lambda_n ({\bf L}, \xi_{v, k}; l^k)) =
%(N(\xi_{w, k} \ast \beta_{k}^{\ast \, n-1}))^{\Theta_{n} ({\bf b_{\bf L}}, {\bf f L})}$$

%$$Tr_{F_{{\bf L}}/F_{{\bf L}/{{\bf l}_i}}} (\Lambda_n ({\bf L}, \xi_{v, k}; l^k)) =
%(\Lambda_n ({\bf L}/{{\bf l}_i}, \xi_{v_{i}, k}; l^k))^{1 - N({\bf l}_i) Fr^{-1}_{{\bf l}_i}}$$
%\end{theorem}

%\begin{proof}

%\end{proof}

%to be continued.............

%\medskip

%\noindent

%\begin{example}\label{Example 2.5}

%\end{example}

\medskip

\noindent
{\it Acknowledgments}:\quad
The first author would like to thank the University
of California, San Diego for hospitality and financial
support during his stay in April 2009, when this collaboration began. Also, he
thanks the
SFB in Muenster for hospitality and financial
support during his stay in September 2009 and the Max Planck Institute in Bonn
for hospitality and financial support during his stay in April and May 2010.
\bigskip

%-----------------BIBLIOGRAPHY----------

\bibliographystyle{amsplain}

\begin{thebibliography}{W}

%\bibitem[Ar]{A}  D.  Arlettaz,\,
%\textit{Algebraic $K$-theory of rings from a topological viewpoint},
%Publicacions Mathem\' atiques \textbf{44} (2000), 3-84

%\bibitem[AB]{AB} D.  Arlettaz, G. Banaszak,\,
%\textit{On the non-torsion elements in the algebraic $K$-theory of
%rings of integers}, Journal Reine Angew. Math.
%       \textbf{461} (1995), 63-79

\bibitem[Ba1]{Ba1} G. Banaszak,\,
\textit{Algebraic $K$-theory of number fields and rings of integers
and the Stickelberger ideal}, Annals of Math.
       \textbf{135} (1992), 325-360

\bibitem[Ba2]{Ba2} G. Banaszak,\,
\textit{Generalization of the Moore exact sequence and the wild kernel
for higher K-groups}, Compositio Math.
       \textbf{86} (1993), 281-305

\bibitem[BG1]{BG1} G. Banaszak, W. Gajda,\,
\textit{Euler Systems for Higher K - theory of number fields},
Journal of Number Theory
       \textbf{58 No. 2} (1996), 213-256

\bibitem[BG2]{BG2} G. Banaszak, W. Gajda,\,
\textit{On the arithmetic of cyclotomic fields and the K-theory
of ${\bf Q}$}, Proceedings of the Conference
on Algebraic K-theory, Contemporary Math. AMS
       \textbf{199} (1996), 7-18

\bibitem[Br]{Br} W. Browder,\,
\textit{Algebraic $K$-theory with coefficients $\Z/p$ }, Lecture Notes in Math.
     \textbf{657} (1978)

\bibitem[Bo]{Bo} A. Borel,\,
\textit{Stable real cohomology of arithmetic groups},
Ann. Sci. \' Ecole Nor. Sup.
       \textbf{7 (4)} (1974), 235-272


\bibitem[Co1] {Co1} J. Coates,\,
\textit{On $K_2$ and some classical conjectures m algebraic number theory.} Ann. of
Math. 95, 99-116 (1972)

\bibitem[Co2]{Co2} J. Coates,\, \textit{K-theory and Iwasawa's analogue of the Jacobian. In. Algebraic $K$-theory
II,} p. 502-520. Lecture Notes in Mathematics 342. Berlin-Heidelberg-New York
Springer (1973)

\bibitem[C]{C} J. Coates,\,
\textit{$p$-adic $L$-functions and Iwasawa's theory},
in Algebraic Number fields by A. Fr{\'' o}chlich, Academic Press, London 1977
       \textbf{} (1977), 269-353

\bibitem[CS]{CS} J. Coates, W. Sinnott,\,
\textit{An analogue of Stickelberger's theorem for the higher K-groups },
Invent. Math.
       \textbf{24} (1974), 149-161

\bibitem[DR]{DR} P. Deligne, K. Ribet\,
\textit{Values of abelian $L$-functions at negative integers over totally real fields},
Invent. Math.
       \textbf{59} (1980), 227-286

\bibitem[DF]{DF} W. Dwyer, E. Friedlander,\,
\textit{Algebraic and \' etale $K$-theory}, Trans. Amer. Math. Soc.
       \textbf{292} (1985), 247-280

\bibitem[DFST]{DFST} W. Dwyer, E. Friedlander, V. Snaith, R. Thomason,\,
\textit{Algebraic $K$-theory eventually surjects onto topological $K$-theory},
Invent. Math.
        \textbf{66} (1982), 481-491

\bibitem[FW]{FW} E. Friedlander and C. Weibel,\,
\textit{An overview of algebraic $K$-theory},
Proceedings of the Workshop and Symposium:\,\,
Algebraic K-Theory and Its Applications, H. Bass, A. Kuku, C. Pedrini editors
World Scientific, Singapore, New Jersey
       \textbf{} (1999), 1-119

\bibitem[GP]{GP} C. Greither, C. Popescu \textit{upcoming work} (2009--2010)

\bibitem[Gi]{Gi} H. Gillet,\,
\textit{Riemann-Roch theorems for higher algebraic $K$-theory},
Adv. in Math.
Number Theory, Contemp. Math.
        \textbf{40} (1981), 203-289

\bibitem[J]{J} U. Jannsen,\,
\textit{On the $l$-adic cohomology of varieties over number fields
and its Galois cohomology}, Mathematical Science Research Institute Publications, Springer-Verlag
       \textbf{16} (1989), 315-360

\bibitem[Po]{Po} C. Popescu,\,
\textit{On the Coates-Sinnott conjecture}, {Math. Nachr.} 282, No. 10 {(2009)}, 1370--1390.
  \textbf{}

\bibitem[Q1]{Q1} D. Quillen,\,
\textit{Higher Algebraic $K$-theory: I}, Lecture Notes in Mathematics
       \textbf{341} (1973), 85-147, Springer-Verlag

\bibitem[Q2]{Q2} D. Quillen,\,
\textit{Finite generation of the groups $K_i$ of rings of integers}, Lecture Notes in Mathematics
        \textbf{} (1973), 179-214, Springer-Verlag

\bibitem[Q3]{Q3} D. Quillen,\,
\textit{On the cohomology and $K$-theory of the general linear groups
over a finite field}, Ann. of Math.
       \textbf{96 (2)} (1972), 552-586

\bibitem[Sch]{Sch} P. Schneider,\,
\textit{{\'' U}ber gewisse Galoiscohomologiegruppen }, Math. Zeit.
       \textbf{168} (1979), 181-205

\bibitem[So1]{So1} C. Soul\' e,\,
\textit{K-th\' eorie des anneaux d'entiers de corps de nombres et cohomologie
\' etale}, Inv. Math.
       \textbf{55} (1979), 251-295


\bibitem[So2]{So2} C. Soul\' e,\,
\textit{Groupes de Chow et K-th\' eorie de vari\' et\' es sur un corps fini}, Math. Ann.
       \textbf{268} (1984), 317-345

\bibitem[Ta1]{Ta1} J. Tate,\, \textit{Letter from Tate to lwasawa on a relation between $K_2$  and Galois cohomotogy},
In Algebraic $K$-theory II, p. 524-527. Lecture Notes in Mathematics 342. Berhn-
Heidelberg-New York : Springer (1973)


\bibitem[Ta2]{Ta2} Tate, J.\,
\textit{Relation between $K_2$ and Galois cohomology}
Invent. Math. 36
\textbf{} (1976) 257-274


\bibitem[We]{We} C. Weibel,\,
\textit{Introduction to algebraic $K$-theory}, book in progress at Charles Weibel home page,
http://www.math.rutgers.edu/~weibel/
       \textbf{}

\bibitem[W]{W} G.W. Whitehead,\,
\textit{Elements of Homotopy Theory}, Graduate Texts in Math.
       \textbf{61} (1978) Springer-Verlag

\end{thebibliography}

\end{document}